\documentclass{amsart}
\usepackage{amscd,amsmath,amssymb,amsfonts,verbatim, xcolor}
\usepackage[all]{xy}
\allowdisplaybreaks
\setcounter{secnumdepth}{5}
\vfuzz2pt
\usepackage{calrsfs}
\DeclareMathAlphabet{\pazocal}{OMS}{zplm}{m}{n}

\newtheorem{theorem}{Theorem}[subsection]
\newtheorem{corollary}[theorem]{Corollary}
\newtheorem{lemma}[theorem]{Lemma}  
\newtheorem{proposition}[theorem]{Proposition}
\theoremstyle{definition}
\newtheorem{definition}[theorem]{Definition}
\theoremstyle{conjecture*}
\newtheorem*{conjecture*}{Conjecture}
\theoremstyle{theorem*}
\newtheorem*{theorem*}{Theorem}
\theoremstyle{emp}

\theoremstyle{question}

\theoremstyle{notat}

\theoremstyle{claim}

\theoremstyle{construction}

\theoremstyle{remark}
\newtheorem{remark}[theorem]{Remark}
\theoremstyle{example}

\newenvironment{customthm}[1]
  {\innercustomthm}
  {\endinnercustomthm}
\numberwithin{equation}{subsection}

\title{ Infinitesimal Chow Dilogarithm}
\author{S\.{I}nan \"{U}nver}
\address{Ko\c{c} University, Mathematics Department. Rumelifeneri Yolu, 34450, Istanbul, Turkey}
\address{Freie Universit\"{a}t Berlin, Mathematics Department, Arnimallee 3, 14195, Berlin, Germany}
\email{sunver@ku.edu.tr}

\setlength{\textwidth}{5.8in}             
\setlength{\textheight}{9.0in}
\setlength{\topmargin}{-0.1in}

\setlength{\oddsidemargin}{.25in}
\setlength{\evensidemargin}{.25in}

 \subjclass[2010]{19E15, 14C25}
\begin{document}
\maketitle
\noindent

\begin{abstract}
Let $C_{2}$ be a smooth and projective curve over the ring of dual numbers  of a field $k.$  Given non-zero rational functions $f,g,$ and $h$ on
$C_{2},$  we define an invariant $\rho(f\wedge g \wedge h) \in k.$ This is an analog of the real analytic Chow dilogarithm and the  extension to  non-linear cycles of the additive dilogarithm of  \cite{Unver}.  Using this construction we state and prove an infinitesimal version of the strong reciprocity conjecture \cite{gon3}. Also using $\rho,$ we define an infinitesimal regulator on algebraic cycles of weight two which generalizes Park's construction in the case of cycles with modulus \cite{P1}.
  \end{abstract}

\section{Introduction}

Let $k$ be a field of characteristic 0 and    $k_{n}:=k[t]/(t^n),$ with $k_2$ being   the ring of dual numbers over $k.$ One expects an abelian category of motives  over any scheme over $k,$ and in particular over  $k_2.$   Goncharov while considering the degeneration of motives attached to hyperbolic manifolds of odd dimension expected that motives over the dual numbers should have  properties similar to euclidean scissors congruence class groups \cite[p. 616]{vol}. Bloch and Esnault further expected that the  real analytic dilogarithm should degenerate to an additive dilogarithm which would give an analog of the volume map on the euclidean scissors congruence class in three dimensions.  This insight was  realised in the construction of an additive dilogarithm for the weight two motivic cohomology complex  based on the localisation sequence in their fundamental paper \cite{BE2}.  Later an additive dilogarithm  for the Bloch complex was constructed in \cite{Unver}.  In a precise sense, the current paper is an extension of the construction of \cite{Unver} to the non-linear case. The construction in this non-linear case is much more involved and requires many new ideas.  In the introduction, we will  try to  explain the main ideas behind the construction.  

Suppose that $X/\mathbb{C}$ is a smooth, projective curve over $\mathbb{C}$ and $f_{1},\, f_2, \,$ and $f_3$  are rational functions on $X.$ Letting 
$$
r_{2}(f_{1},f_{2},f_{3}):={\rm Alt}_{3}(\frac{1}{6} \log |f_1| \cdot d \log |f_{2}| \wedge d \log |f_{3}| -\frac{1}{2}\log |f_1|\cdot d \arg f_{2} \wedge d \arg f_{3} ),
$$
with the property that $d r_{2}(f_{1},f_{2},f_{3})={\rm Re}( {\rm Alt}_{3}(d \log (f_1)\wedge d \log (f_2) \wedge d \log (f_3) )),$ one has, up to a constant multiple, the Chow dilogarithm map \cite[p. 4]{gon3}:
$
\rho_{\mathbb{R}}: \Lambda^{3} \mathbb{C}(X) ^{\times} \to \mathbb{R}
$ given by 
\begin{eqnarray}\label{realchow}
\rho_{\mathbb{R}}(f_1 \wedge f_2 \wedge f_3) := \int_{X(\mathbb{C})} r_{2}(f_{1},f_{2},f_{3}).
\end{eqnarray}

The main construction of this paper is the analog $\rho$ of this map  for $\mathbb{R}$ replaced with $k_2,$ which is expected in the spirit of the above discussion about the analogy between mixed tate motives over $k_2$ and euclidean scissors congruence class groups.    In the main set-up of this paper $X/\mathbb{C}$ is replaced with a smooth and projective curve $C_{2}/k_{2}.$  

The main challenge in constructing such a map is to find an analog of the integral (\ref{realchow}) for a curve $C_{2}/k_2.$ A related problem is that the integrand involves taking the real part of an algebraic form. Therefore, we have to find an algebraic construction in the context of motives over $k_2$ which will be an analog of this. We have encountered such a function in \cite{Unver}: $\log^{\circ}: k[[t]] ^{\times} \to k[[t]]$  defined as $\log^{\circ}(a):=\log (a/a(0)).$  We would like to think of $\log ^{\circ}$ as the  analog of   a  branch of the logarithm  or as the real part of the logarithm. In order to find an expression which is an analog of (\ref{realchow}), we are again motivated by the construction in \cite{Unver}: the construction in this paper restricted to the case of $\mathbb{P}^{1} _{k_2}$ and a triple of  linear fractional transformations  gives the construction in \cite{Unver}. The precise statement can be found in Theorem \ref{compunver}. Even though both the arguments in \cite{Unver}  and the ones in the  current paper use the idea of  liftings of the functions  to some thickening of $k_2,$ the approach in \cite{Unver} is computational, here it is more geometric. We briefly outline the main idea.

In \cite{Unver}, we defined the additive dilogarithm  $\ell i_2: B_{2}(k_2) \to k,$  by 
$
\ell i_2(\{s+at\} _{2})=-\frac{a^3}{2s^2 (1-s)^2}.
$
In order to show that this function indeed satisfies the functional equation for the dilogarithm and deduce its basic properties, we interpreted this formula as follows. Let $\ell: \Lambda^{2} _{\mathbb{Z}} k[[t]]^{\times} \to k$ denote the map  (\ref{ellmap}), which in this case takes the form  $\ell(u\wedge v):=\log^{\circ}(u)[t^2]\cdot \log^{\circ}(v)[t]-\log^{\circ}(u)[t]\cdot \log^{\circ}(v)[t^2],$ where for $a \in k[[t]], $ $a[t^n]$ denotes the coefficient of $t^n$ in $a.$ Then the map $B_{2}(k[[t]]) \to k$ defined by sending 
$\{\alpha\}_{2} $   to $\ell(1-\alpha) \wedge \alpha),$ factors through the canonical projection $B_{2}(k[[t]]) \to B_{2}(k_2),$ to give $\ell i_2.$ 

Let us now turn to the context of this paper of a curve $C_{2}/k_2.$ When considering general rational functions on $C_{2},$ we  put a slight restriction on the type of singularities that our functions can have. We allow those functions on $C_{2}$ which are locally a product of a unit and a power of an element that reduces to a uniformizer on the closed fiber. The precise definition is in Definition 3.1.1. For a choice of liftings of uniformizers $\mathcal{P}_{2},$ we call a rational function on $C_{2},$ $\mathcal{P}_{2}$-good  in \textsection 3.2,   if it satisfies this hypothesis with respect to the uniformizers in $\mathcal{P}_{2}.$   Letting $k(C_{2},\mathcal{P}_{2})^{\times} $ denote the set of rational functions which are $\mathcal{P}_{2}$-good, the infinitesimal Chow dilogarithm $\rho$ is a map
$
\rho: \Lambda^{3} k(C_{2},\mathcal{P}_{2} )^ \times \to k.
 $
 
The main idea in constructing $\rho$ is as follows. If  $C_{2}/k_{2}$ has a global lifting $\tilde{C}/k[[t]]$ to a smooth and projective curve and the functions $(f,g,h)$ have global liftings $(\tilde{f},\tilde{g},\tilde{h})$ to functions on $\tilde{C}$ which are good with respect to a system of uniformizers $\tilde{\mathcal{P}}:=\{\tilde{\pi}_{c}|c \in |\tilde{C}_s| \}$ on $\tilde{C}$ that  lift $\mathcal{P}_{2}$ then  
\begin{eqnarray}\label{simplerho}
\rho(f,g,h)=\sum _{c \in |\tilde{C}_s|} {\rm Tr}_k(\ell(res_{\tilde{\pi}_c} (\tilde{f} \wedge \tilde{g} \wedge \tilde{h}))), 
\end{eqnarray}
where  $res_{\tilde{\pi}_c}$ is the residue map (\ref{goncres}).

The existence of a global lifting of $C_{2}$ together with good liftings of the functions $f,g,$ $h$ is too much to ask for. Even if we knew that such liftings  exist, we need to show that the expression (\ref{simplerho}) is independent of the choices of the liftings which is a non-trivial task.  In the following, we do not assume the existence of any lifting. Instead, we choose local  liftings of $C_{2}$ at all of its  points, including the generic point. These are known to exist by the smoothness assumption. On the generic lifting we choose arbitrary liftings of the functions; on the local liftings we choose good liftings of the functions. The general expression for $\rho$ is then given in Definition 3.2.2. Sections 3.1 and 3.2 are devoted to showing that this sum makes sense and is independent of all the choices. 

In  Definition 3.2.2, there is a differential form $\omega$ which measures the defect of choosing different liftings. We would like to think of different liftings as the analogs of the different branches of the functions under consideration and $\omega $ as measuring the difference between the  choice of two different branches. This differential is in some sense the main technical object.  
The map $\omega$  attaches a 1-form to the following data:  smooth affine schemes  $\pazocal{A}_{i}/k[[t]]$ of relative dimension one, an isomorphism $\chi: \pazocal{A}_{1}/(t^2) \xrightarrow{\sim} \pazocal{A}_{2}/(t^2)$ and triples of functions $(f_i,g_i,h_i)$ in $\pazocal{A}_{i} ^{\times}$ whose reductions modulo $t^2$ map to each other via $\chi.$ The 1-form is given by the formula (2.3.1). However, this description  depends on many  choices and it is of fundamental importance for the  construction of the regulator $\rho$ to show that this construction of $\omega$  is independent of all the choices. These are proven in Lemma 2.3.1 and Lemma 2.3.3. It is at this point that we have to assume that our objects have dimension at most 1. The statements about independence of the choices are not true without this assumption.  The residue property (P8) in \textsection \ref{residue} allows us to compute the residue of $\omega$ when both of its  arguments are good liftings. This property is used when proving that the regulator $\rho$ does not depend on the choice of local liftings in Proposition 3.1.2. 

 The proof of the infinitesimal version of the strong reciprocity conjecture uses the relative Bloch group relation (P11). In order to state this relation, we  first construct a relative version of the standard Bloch group $B_2,$ and a relative version of the part of the weight three Bloch complex in degrees 2 and 3.  The Proposition 2.4.5 then states that $\omega$ defines a map from the Milnor part of the relative Bloch complex of weight three. Here and the discussions below regarding the strong reciprocity conjecture, we would like to emphasize that we consider only the part of the Bloch complex of weight three. There is in fact a term in degree one which contains a version of the Bloch group $B_{3},$ but this term has no consequence for the present paper since the strong reciprocity only refers to the terms of degree 2 and 3. Defining the analog of $B_3$ would take us too far outside of the material of this paper, this question will be dealt with in more generality in a future work.   

The construction of the infinitesimal Chow dilogarithm with the desired properties is in fact more or less equivalent to the infinitesimal version of the strong reciprocity conjecture of Goncharov \cite{gon3}, by using the results of \cite{Unver} on the structure of $B_{2}(k_2):$

\begin{customthm}{3.4.4}  The infinitesimal part of the residue map 
$$
res_{|C|}: \Gamma(k(C_{2},\mathcal{P}_{2}),3) \to \Gamma(k,2)^{\circ}
$$
is homotopic to 0. More precisely, there is a map $h: \Lambda^{3}k(C_2,\mathcal{P}_{2}) ^{\times} \to B_{2}(k_2)^{\circ}$ which makes the diagram 
$$
\xymatrix{ B_2 (k(C_2,\mathcal{P}_{2}))\otimes k(C_2,\mathcal{P}_{2})^{\times} \ar[r] ^-{ \Delta }\ar[d] ^{res_{|C|}}&\Lambda^{3}k(C_2,\mathcal{P}_{2}) ^{\times} \ar[d]^{res_{|C|}} \ar@{.>}[dl] _{h} \\  B_{2}(k_2)^{\circ} \ar[r]^{\delta^{\circ}} & (\Lambda^{2}k_2^{\times})^{\circ}}
$$
commute and has the property that $h(k_{2} ^{\times} \wedge \Lambda ^{2} k(C_2,\mathcal{P}_{2}) ^{\times})=0.$ 
\end{customthm}

In the last section we use the discussions of this paper  to construct a regulator for algebraic cycles of weight two over $k[[t]].$ That this regulator vanishes on boundaries is a direct consequence of the definition. We prove that if two smooth cycles are the same modulo $t^2$  then they have the same regulator value and also we prove that the regulator vanishes on products. Finally in \textsection 4.4.5, we compare our construction in the case of cycles to the construction of Park in the case of cycles with modulus 2.  Here we emphasize that we use the strong sup modulus when defining the additive Chow groups.

There are many questions that are left unanswered in this paper. The most important one in our opinion  is the generalization of  the above construction to higher weights, moduli and to characterstic $p.$  In characteristic $p,$ even in weight two and modulus two, we expect a new component for our Chow dilogarithm which is of  arithmetic nature. Another fundamental question is to understand the relation of the theory of this paper with that of \cite{griff}, which was in fact  one of our main motivations for this paper. These will be the subjects of future work.  A related  question is to study the regulator on algebraic cycles in more detail and  define an equivalence relation on cycles over $k[[t]]$ which would give a construction of the motivic cohomology complex for truncated polynomial rings in terms of algebraic cycles.  We will pursue this approach in our joint work with J. Park. 

{\bf Outline.} The paper is organized as follows. After a recollection of basic facts about splittings in \textsection 2.1, in the remaining part of  \textsection 2, we construct the map $\omega.$  Its uniqueness is proven in \textsection 2.2, and existence in \textsection 2.3. The fundamental properties of $\omega$ are proven in \textsection 2.4. 

In \textsection 3.1 and \textsection 3.2,  we give the construction of the regulator $\rho.$ In \textsection 3.3 and 3.4, we prove the infinitesimal version of the strong reciprocity conjecture. In \textsection 3.5, we relate the construction of this paper to that of \cite{Unver}.

In \textsection 4,  we apply the construction to the case of algebraic cycles of weight two over $k[[t]],$ and construct an infinitesimal regulator on these cycles.  

{\bf Notation.} For a functor $F$ from $k$-algebras to abelian groups, let $F(k_2)^{\circ}$ denote the direct summand of $F(k_{2})$ which is the kernel of the projection $F(k_2)\to F(k)$ corresponding to the $k$-algebra homomorphism $k_{2}=k[t]/(t^2)\to k$ which sends $t$ to $0.$ If $\pazocal{A}$ is a $k[[t]]$ algebra then we denote $\pazocal{A}/(t^n)$ by $\pazocal{A}|_{t^n}.$ Similarly, if $f \in \pazocal{A},$ then we let $f|_{t^n}$ denote its image in $\pazocal{A}|_{t^n}.$

 {\it Acknowledgements.} The author  thanks J. Park very much  for many  discussions related to the contents of this paper, especially regarding  \textsection 4. He thanks H. Esnault for  mathematical discussions and for being a wonderful host to him as an Humboldt Experienced Researcher at Freie Universit\"{a}t, where part of this work was carried out. Finally, he would like to thank the Humboldt Foundation for support.

\section{Algebraic construction and properties of the canonical 1-form}

In this section we would like to present an algebraic construction of the 1-form that we referred to above. We give this construction in a slightly more general context than is necessary for our purposes for the reasons that it makes the construction more transparent and this general construction will be needed for future generalisations of this work.

\subsection{Splittings}

In the first subsection, we review the notion of a {\it splitting}, the existence of which is guaranteed by the infinitesimal lifting properties of  smooth affine schemes. This notion enables us to make explicit computations by  choosing local coordinates.  

For a ring $A$ and an ideal $I$ of $A,$ let $\underline{A}:=A/I,$  a {\it splitting} of the projection  $\pi:A \to \underline{A},$ is a map $\varphi:\underline{A}\to A$ such that $\pi\circ \varphi=id.$ In the special case that we are interested in, the existence of a splitting is guaranteed by Grothendieck \cite[III-Corollaire 5.6]{SGA1}:

\begin{lemma}\label{groth}
Let $k$ be a field,   $A$  a noetherian $k$-algebra, and  $I:=(x)$ an ideal of $A,$ such that   $x$ is not a zero-divisor in $A,$   $A$ is complete with respect to the $I$-adic topology and $\underline{A}$ is smooth over $k.$ Then there is a continuous isomorphism $\underline{A} [[t] \to A,$ which sends  $t$ to the element $x$   and  such that the composition $\underline{A} \hookrightarrow \underline{A} [[t]] \to A \to \underline{A}$ is the identity map on $\underline{A}.$ 
\end{lemma}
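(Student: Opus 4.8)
The plan is to realize the desired map as a completed power series homomorphism attached to a splitting $\varphi$ of $\pi\colon A \to \underline{A}$, and then to verify bijectivity by passing to associated graded rings. In essence this is the cited statement \cite[III-Corollaire 5.6]{SGA1}, and I would reproduce its proof in the present setting.

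First I would construct the splitting $\varphi$ by successive infinitesimal lifting. Since $A$ is $I$-adically complete, $A = \varprojlim_n A/I^n$, and each projection $A/I^{n+1} \to A/I^n$ has kernel $I^n/I^{n+1}$, which is square-zero because $(I^n)^2 = I^{2n} \subseteq I^{n+1}$ for $n \ge 1$. As $\underline{A}$ is smooth, hence formally smooth, over $k$, the infinitesimal lifting property lets me lift any $k$-algebra homomorphism $\underline{A} \to A/I^n$ along this square-zero extension. Starting from $\varphi^{(1)} = \mathrm{id}\colon \underline{A} \to A/I$ and iterating, I obtain a compatible system $\varphi^{(n)}\colon \underline{A} \to A/I^n$; passing to the limit gives a $k$-algebra homomorphism $\varphi\colon \underline{A} \to A$ with $\pi \circ \varphi = \mathrm{id}$.

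Next I would define $\Phi\colon \underline{A}[[t]] \to A$ by $\Phi(\sum_i a_i t^i) = \sum_i \varphi(a_i) x^i$. This series converges in the $I$-adic topology because $x^i \in I^i$ and $A$ is complete, and $\Phi$ is then a continuous $k$-algebra homomorphism sending $t$ to $x$ and carrying the $t$-adic filtration into the $I$-adic filtration. By construction the composite $\underline{A} \hookrightarrow \underline{A}[[t]] \xrightarrow{\Phi} A \xrightarrow{\pi} \underline{A}$ equals $\pi \circ \varphi = \mathrm{id}$, which gives the asserted identity on $\underline{A}$.

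Finally, to see that $\Phi$ is an isomorphism I would pass to associated graded rings for the two filtrations. On the $n$-th graded piece $\Phi$ induces $\underline{A}\cdot t^n \to I^n/I^{n+1}$, $a\,t^n \mapsto \overline{\varphi(a) x^n}$; modulo $I$ this is $\bar a \mapsto \overline{\bar a\, x^n}$, and since $x$ is a non-zero-divisor this map $\underline{A} \isomto I^n/I^{n+1}$ is an isomorphism. Both $\underline{A}[[t]]$ and $A$ are complete and separated for their respective filtrations, so an isomorphism on the associated graded forces $\Phi$ itself to be an isomorphism. The main obstacle is the inductive lifting step: it is the only place where smoothness is genuinely used, and one must ensure the lifts are chosen compatibly so that they glue in the limit. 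The non-zero-divisor hypothesis, by contrast, enters only at the very end, to identify each graded piece $I^n/I^{n+1}$ with $\underline{A}$.
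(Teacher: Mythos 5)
Your proof is correct. The paper itself gives no argument for this lemma beyond the citation to \cite[III-Corollaire 5.6]{SGA1}, and what you have written is precisely the standard proof underlying that reference: construct the splitting by iterated infinitesimal lifting along the square-zero extensions $A/I^{n+1}\to A/I^n$ using formal smoothness of $\underline{A}$, extend to $\underline{A}[[t]]\to A$ by $I$-adic convergence, and conclude via the associated graded map, where the non-zero-divisor hypothesis identifies $I^n/I^{n+1}$ with $\underline{A}$.
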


\begin{remark}\label{rem split} (i) It is easy to see that for any ring $B,$ there is a 1-1 correspondence between splittings $\varphi: B \to  B[[t]]$ of $\pi:B[[t]] \to B$ and automorphisms $\alpha: B[[t]]  \to B[[t]] $ such that $B\hookrightarrow B[[t]]  \overset{\alpha}{\to} B[[t]]  \overset{\pi} {\to} B $ is the identity and $\alpha(t)=t.$ 

(ii) Using   Lemma \ref{groth}, the above remark  generalizes to any $A$ and $I$ which satisfy the hypothesis of the lemma. Namely,  there is a 1-1 correspondence between splittings $\varphi: \underline{A} \to A$ of $\pi:A\to \underline{A}$ and isomorphisms $\alpha:\underline{A}[[t]]\to A$ such that $\alpha(t)=x,$  and  $\underline{A}\hookrightarrow \underline{A}[[t]]  \overset{\overline{\varphi}}{\to} A\overset{\pi} {\to} \underline{A} $ is the identity. Given $\overline{\varphi}$ the corresponding $\varphi$ is obtained by restricting to $\underline{A}.$ 

(iii) The  splittings behave functorially with respect to localisation. 
More precisely, if  $\varphi: \underline{A} \hookrightarrow A$ is a splitting and  $g \in \underline{A},$ then $\varphi$ induces a splitting $\varphi _g: \underline{A}_g \hookrightarrow A_{\varphi (g)}.$
\end{remark}

We will use the second remark above constantly in order to choose a local system of coordinates. Given a splitting $\varphi,$ there is a unique isomorphism $\overline{\varphi}: \underline{A}[[t]] \to A$ as above such that $\overline{\varphi}|_{\underline{A}}=\varphi.$ We will use $\overline{\varphi} ^{-1}: A \to \underline{A}[[t]] ,$ as a local chart.



In general, the splittings as above are not unique. We  prove uniqueness in a case that we need, in order to  handle the 0-dimensional cycles. This result is  well-known, but we give a  proof since it is easier than finding a suitable reference. 
\begin{lemma}\label{uni-qcf-cf}
Suppose that  $k$ is  a perfect  field,  and  $A$ and $I$ are as in Lemma \ref{groth} with $\underline{A}$ a finite extension of $k.$   Then  there exists a unique splitting $\varphi: \underline{A} \to A$. 
\end{lemma}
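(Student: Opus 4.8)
The plan is to reduce the statement to a single application of Hensel's lemma in the complete ring $A,$ using the perfectness of $k$ to produce a separable primitive element. First I would record the shape of the problem: a splitting $\varphi:\underline{A}\to A$ is a ring homomorphism with $\pi\circ\varphi=\mathrm{id},$ and since $k\subseteq\underline{A}$ and $\pi$ is a $k$-algebra map, any such $\varphi$ is automatically a $k$-algebra homomorphism. Because $\underline{A}$ is a finite field extension of $k,$ the entire homomorphism $\varphi$ is determined by the image of a single primitive element, so the uniqueness of $\varphi$ is equivalent to the uniqueness of a suitable lift of that element.

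Concretely, because $k$ is perfect the finite extension $\underline{A}/k$ is separable, so by the primitive element theorem I may write $\underline{A}=k[\theta]/(m(\theta))$ with $m$ irreducible and separable; writing $\alpha$ for the image of $\theta$ we have $m'(\alpha)\neq 0$ in the field $\underline{A}.$ Giving a $k$-algebra section $\varphi$ is then the same as choosing an element $a=\varphi(\alpha)\in A$ with $m(a)=0$ and $\pi(a)=\alpha,$ the correspondence being $\varphi\mapsto\varphi(\alpha)$: indeed such an $a$ forces $\theta\mapsto a$ to factor through $k[\theta]/(m(\theta))=\underline{A},$ and any two sections agreeing on $\alpha$ agree on all of $\underline{A}=k[\alpha].$ Thus the lemma is equivalent to the assertion that $m$ has a unique root in $A$ reducing to $\alpha$ modulo $I.$

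To establish that, I would pick any lift $\tilde{\alpha}\in A$ of $\alpha.$ Then $\pi(m(\tilde{\alpha}))=m(\alpha)=0$ gives $m(\tilde{\alpha})\in I,$ while $\pi(m'(\tilde{\alpha}))=m'(\alpha)$ is invertible in the field $\underline{A};$ hence $m'(\tilde{\alpha})$ is a unit modulo $I,$ and since $A$ is $(x)$-adically complete and separated, every element of $1+I$ is invertible via a convergent geometric series, so $m'(\tilde{\alpha})$ is in fact a unit in $A.$ These are exactly the hypotheses of Hensel's lemma for the complete ring $A,$ which by Newton iteration yields a unique $a\in A$ with $m(a)=0$ and $a\equiv\tilde{\alpha}\pmod{I}.$ By the previous paragraph this $a$ produces the desired splitting and shows it is unique; in particular this re-proves existence, in accordance with Lemma \ref{groth}.

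The conceptual heart, and the only place the hypotheses are genuinely used, is the verification that $m'(\tilde{\alpha})$ is a unit: this is where perfectness of $k$ (forcing separability, hence $m'(\alpha)\neq 0$) and completeness of $A$ (promoting a unit mod $I$ to an honest unit, and fueling the Newton iteration) both enter. Everything else is formal. I note that one could phrase the same argument invariantly by saying that $\underline{A}$ is \'etale, hence formally \'etale, over $k,$ so that the identity on $\underline{A}=A/I$ lifts uniquely through each square-zero extension $A/I^{n+1}\to A/I^{n}$ and these lifts assemble by completeness; I prefer the Hensel formulation since it is self-contained and avoids citing formal \'etaleness.
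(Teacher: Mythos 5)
Your proof is correct, but it takes a genuinely different route from the paper's. The paper quotes Lemma \ref{groth} for existence and proves uniqueness by deformation theory: for a square-zero extension $\tilde{B}\to\tilde{B}/I$, the liftings of a $k$-algebra map $\underline{A}\to\tilde{B}/I$ form a pseudo-torsor under $\mathrm{Hom}_{\underline{A}}(\Omega^{1}_{\underline{A}/k},I)$, which vanishes because $\underline{A}/k$ is finite separable; one then inducts along the quotients $A/I^{n}$ and uses completeness to conclude. You instead invoke the primitive element theorem and run Hensel's lemma on the minimal polynomial $m$ in the complete ring $A$, which simultaneously re-proves existence and gives uniqueness. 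The two arguments rest on the same input --- separability of $\underline{A}/k$, appearing either as $\Omega^{1}_{\underline{A}/k}=0$ or as $m'(\alpha)\neq 0$ --- and your closing remark correctly identifies the paper's argument as exactly the formal-\'etaleness formulation you chose to avoid; your version is more elementary and self-contained, while the paper's is the one that generalizes beyond the monogenic case. One small correction: your claim that a section of $\pi$ is ``automatically'' a $k$-algebra homomorphism is not justified. When $k$ is not a prime field, a ring-theoretic section need not restrict to the structure map on $k$ (for instance $u\mapsto u+t$ extends to a ring section of $k[[t]]\to k$ for $k=\mathbb{Q}(u)$ that is not $k$-linear), and with that reading the uniqueness assertion would be false. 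The lemma must be, and in the paper implicitly is, read in the category of $k$-algebras; since your argument only ever uses $k$-algebra sections, nothing else is affected, but the parenthetical justification should be replaced by stating the convention.
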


\begin{proof}
Since $\underline{A}$ is smooth over $k,$ the existence follows from Lemma \ref{groth}. For any $k$-algebra $\tilde{B}$ with a square-zero ideal $I,$ and a $k$-algebra homomorphism $\phi:\underline{A} \to \tilde{B}/I,$ the set of liftings of $\phi$ to a $k$-algebra homomorphism $\tilde{\phi}:\underline{A} \to \tilde{B}$ is a psuedo-torsor under $Hom_{\underline{A}}(\Omega^{1}  _{\underline{A}/k},I).$ Since under the assumptions $\Omega^{1} _{\underline{A}/k}=0,$ there is at most one lifting.  Applying this inductively to the rings $\tilde{B}=A/I^{n},$ starting with the identity map for $n=1$ gives the result. 
 \end{proof}

\subsection{Uniqueness of the canonical 1-form $\omega$}

Let $A:=k[[t]],$ we will be dealing with a full subcategory  $\mathcal{C}^{1} _{A}$ of the category of $A$-algebras. The objects of this category consist of $A$-algebras $\pazocal{A},$ which are smooth of relative dimension 0 or 1, are an integral domain and have the property that the residue fields of the closed points of $\pazocal{A}/(t)$ are  one of the following type: a finite extension of $k,$ if the relative dimension is 1; a finitely generated field of transcendence degree 1 over $k,$ or a field of Laurent series $k'((s))$ over a finite extension $k'$ of $k,$ if the relative dimension is 0.     Note that the  condition on the residue fields of the  closed points  is satisfied if $\pazocal{A}$ is an algebra of finite type over $A$ or is the localisation or completion at a point in the closed fiber of such an algebra.

  If $\pazocal{A}$ is an object of  $\mathcal{C}^{1} _{A},$ we denote  the quotient $\pazocal{A}/(t)$ by $\underline{\pazocal{A}}.$ If we have an isomorphism $\chi:\tilde{\pazocal{A}}|_{t^2} \to \hat{\pazocal{A}}|_{t^2},$ and a pair $(\tilde{p},\hat{p}),$ where  $\tilde{p}:=( \tilde{y}_{1},\tilde{y}_{2},\tilde{y}_{3})$   and  $\hat{p}:=( \hat{y}_{1},\hat{y}_{2},\hat{y}_{3})$   such that     $\tilde{y}_{i} \in \tilde{\pazocal{A}}^{\times},$ $\hat{y}_{i} \in \hat{\pazocal{A}} ^{\times},$ and   $\chi (\tilde{y}_{i}|_{t^2}) =\hat{y}_{i}|_{t^2} $ in $\hat{\pazocal{A}}|_{t^2},$ for $1\leq i \leq 3,$ then we will construct a canonical element  in $\Omega^{1}_{\underline{\hat{\pazocal{A}}}/k}.$ We describe this construction in detail in the next few sections.

Given an isomorphism $\chi: \tilde{\pazocal{A}}|_{t^2} \to \hat{\pazocal{A}}|_{t^2},$ we let  $\beta (\tilde{\pazocal{A}},\hat{\pazocal{A}},\chi)$ denote the abelian group   of  pairs $(\tilde{y},\hat{y}) \in \tilde{\pazocal{A}} ^{\times} \times \hat{\pazocal{A}} ^{\times}$ such that $\chi(\tilde{y}|_{t^2})=\hat{y}|_{t^2}.$ We let $\times ^{n}\beta  (\tilde{\pazocal{A}},\hat{\pazocal{A}},\chi)$ denote the $n$-fold cartesian product of $\beta (\tilde{\pazocal{A}},\hat{\pazocal{A}},\chi)$ with itself. Note that this last group can also be thought of as the group of  pairs $(\tilde{p},\hat{p})$ of $n$-tuples of elements of $\tilde{\pazocal{A}}^{\times}$ and $\hat{\pazocal{A}}^{\times}$ such that if $\tilde{p}=(\tilde{y}_{1},\cdots, \tilde{y}_{n})$ and $\hat{p}=(\hat{y}_{1},\cdots, \hat{y}_{n}),$ then $\chi(\tilde{y}_i|_{t^2})=\hat{y}_i|_{t^2} \in \hat{\pazocal{A}}|_{t^2},$ for all $1 \leq i \leq n.$ We abbreviate this last condition as $\chi(\tilde{p}|_{t^2})=\hat{p}|_{t^2} \, $. We will freely switch between these two notations without mentioning it.

We will construct a  map 
$$
\omega(\cdot,\cdot   ,\chi): \times ^3 \beta(\tilde{\pazocal{A}},\hat{\pazocal{A}},\chi)\to \Omega^1 _{ \underline{\hat{\pazocal{A}}} /k } ,						
$$
with the following properties.

 (P1) {\it Normalising condition/definition.} Let $k'/k$ be a finite  extension.  Suppose that $\hat{\pazocal{A}}=\tilde{\pazocal{A}}=k'((s))[[ t]]$ and $\chi=id.$ Then because of the assumptions $\hat{y}_{i} -\tilde{y}_{i} \in(t^2),$ for $1 \leq i \leq 3.$  Therefore, we can write uniquely 
 $\hat{y}_i=\alpha_{0i}e^{t\alpha_{1i}+t^2\hat{\alpha}_{2i}+\cdots }$ and $\tilde{y}_i=\alpha_{0i}e^{t\alpha_{1i}+t^2\tilde{\alpha}_{2i}+\cdots },$ with $\alpha_{ji}, \hat{\alpha}_{ki},\tilde{\alpha}_{ki} \in k'((s)),$ for $0 \leq j \leq 1, \,2 \leq k,$ and $1\leq i \leq 3. $  

Then we have

\begin{eqnarray}\label{mainform}
\tilde{\omega}(\tilde{p},\hat{p},id) := \sum _{\sigma \in S_{3}} (-1) ^\sigma \alpha_{1 \sigma(1)} ( \tilde{\alpha}_{2\sigma(3)}-\hat{\alpha}_{2\sigma(3)} ) \cdot d \log (\alpha_{0\sigma(2)}) \in \Omega^{1} _{k'((s))/k}.
\end{eqnarray}

 (P2) {\it Completion.}   Suppose that we have  $\tilde{\pazocal{A}}$ and $\hat{\pazocal{A}}$ as above together with an  isomorphism $\chi:\tilde{\pazocal{A}}|_{t^2} \to \hat{\pazocal{A}}|_{t^2},$ and  pairs $(\tilde{p},\hat{p}),$ where  $\tilde{p}:=( \tilde{y}_{1},\tilde{y}_{2},\tilde{y}_{3})$   and  $\hat{p}:=( \hat{y}_{1},\hat{y}_{2},\hat{y}_{3})$   such that     $\tilde{y}_{i} \in \tilde{\pazocal{A}} ^{\times},$ $\hat{y}_{i} \in \hat{\pazocal{A}}^{\times},$ and   $\chi (\tilde{y}_{i}|_{t^2}) =\hat{y}_{i}|_{t^2} $ in $\hat{\pazocal{A}}|_{t^2},$ for $1\leq i \leq 3.$ 
 
 Let  $\tilde{\pazocal{B}}$ and $\hat{\pazocal{B}}$ denote the completions of  $\tilde{\pazocal{A}}$ and $\hat{\pazocal{A}}$ along the closed fiber. Note that $\chi$ induces an isomorphism 
 $$
  \tilde{\pazocal{B}}|_{t^2}\simeq  \tilde{\pazocal{A}}|_{t^2} \to \hat{\pazocal{A}}|_{t^2} \simeq \hat{\pazocal{B}}|_{t^2},
 $$
 which we will  denote by $\psi.$ Let $\tilde{q}$ and $\hat{q}$ denote the images of $\tilde{p}$ and $\hat{p}$ in the respective completions $\tilde{\pazocal{B}}$ and $\hat{\pazocal{B}}$. Then we have 
 $$
 \omega(\tilde{p},\hat{p},\chi)=\omega(\tilde{q},\hat{q},\psi)
 $$
 
 (P3) {\it Functoriality.}  Suppose that we have  a diagram 
 \[
 \begin{CD}
\tilde{\pazocal{A}}_1 @>{\chi_1}>> \hat{\pazocal{A}}_1 \\
@V{f}VV  @V{g}VV\\
\tilde{\pazocal{A}}_2 @>{\chi_2}>>  \hat{\pazocal{A}}_2 ,
 \end{CD}
 \]
 where $\chi_{i}$  are isomorphisms, and 
 such that there exists a closed point in $\tilde{\mathfrak{p}}_2 \in {\rm Spec}\,  \underline{\hat{\pazocal{A}}}_2 ,$ with the property that $g$ induces an isomorphism from the localisation at $(t)$ of the completion  of $\hat{\pazocal{A}}_1$ at $g^{-1}(\tilde{\mathfrak{p}}_2)$ to the localisation at $(t)$ of the completion of $\hat{\pazocal{A}}_2$ at $\tilde{\mathfrak{p}}_2.$ Note that this also implies that $f$ induces an isomorphism from the localisation at $(t)$ of the completion  of $\tilde{\pazocal{A}}_1$ at $(\chi_2 \circ f)^{-1}(\tilde{\mathfrak{p}}_2)$ to the localisation at $(t)$ of the  completion of $\tilde{\pazocal{A}}_2$ at $\chi_{2} ^{-1}(\tilde{\mathfrak{p}}_2).$
 
Let $(\tilde{p},\hat{p}) \in \times ^{3} \beta(\tilde{\pazocal{A}},\hat{\pazocal{A}},\chi_{1} |_{t^2}),$ then  
 we have 
 $$
\underline{g}_{*}( \omega(\tilde{p},\hat{p},\chi_1 |_{t^2}))=\omega(f(\tilde{p}),g(\hat{p}),\chi_2 |_{t^2}),
 $$
 where $\underline{g}_{*}: \Omega^1 _{ \underline{\hat{\pazocal{A}} }_1 /k } \to \Omega^1 _{ \underline{\hat{\pazocal{A}} }_2 /k }$ is the map induced by $g.$

\begin{proposition}
The properties (P1)-(P3) uniquely determine $\omega.$ 
\end{proposition}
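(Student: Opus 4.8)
The plan is to prove that any $\omega$ satisfying (P1)--(P3) is forced to equal a single explicit expression, by reducing every datum $(\tilde{p},\hat{p},\chi)$ over an arbitrary pair $(\tilde{\pazocal{A}},\hat{\pazocal{A}})$ to the normalising situation of (P1), where the value is prescribed by \eqref{mainform}. The organising principle is that, because $\underline{\hat{\pazocal{A}}}$ is smooth of relative dimension at most $1$, the module $\Omega^{1}_{\underline{\hat{\pazocal{A}}}/k}$ is locally free of rank at most one, in particular torsion free; consequently an element of it is determined by its images in the completed local rings at the closed points of $\underline{\hat{\pazocal{A}}}$, and a form on a field of transcendence degree $1$ is determined by its image in $\Omega^{1}_{k'((s))/k}$ after localisation. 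It therefore suffices to compute $\omega$ after pushing it forward, through the morphisms admitted by (P3), onto the zero-dimensional objects $k'((s))[[t]]$, where (P1) applies verbatim.

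Concretely I would proceed as follows. By (P2) we may assume $\tilde{\pazocal{A}}$ and $\hat{\pazocal{A}}$ complete along the closed fibre. For an object of relative dimension $1$, fix a closed point $\mathfrak{p}$ of $\underline{\hat{\pazocal{A}}}$ with residue field $k'$; by Lemma~\ref{groth} the completion of $\hat{\pazocal{A}}$ at $\mathfrak{p}$ is $k'[[s,t]]$, and the completion map satisfies the hypothesis of (P3), since its effect on the localisation at $(t)$ of the completion at $\mathfrak{p}$ is the identity; hence $\omega$ is carried to the form over $k'[[s,t]]$ by the localisation $\Omega^{1}_{\underline{\hat{\pazocal{A}}}/k}\to\Omega^{1}_{k'[[s]]/k}$. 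Localising $k'[[s,t]]$ at the prime $(t)$, i.e.\ passing to the generic point of its closed fibre, is again admissible for (P3) and produces a zero-dimensional object with fibre $k'((s))$, whose completion is $k'((s))[[t]]$; by (P2) the value of $\omega$ is unchanged under this last completion. The zero-dimensional objects whose fibre $F$ has transcendence degree $1$ are treated by the same generic-localisation move in reverse: such an object is the localisation at $(t)$ of a relative-dimension-$1$ object $\pazocal{B}$, and the induced map $\Omega^{1}_{\underline{\pazocal{B}}/k}\hookrightarrow\Omega^{1}_{F/k}$ is injective, so its value is the image of the already determined form on $\pazocal{B}$. Finally, on $k'((s))[[t]]$ I would trivialise $\chi$: after identifying the two closed fibres through $\chi\bmod t$ we have $\tilde{\pazocal{A}}=\hat{\pazocal{A}}=k'((s))[[t]]$ with $\chi$ a $k[[t]]$-automorphism of the reduction modulo $t^{2}$, which by Remark~\ref{rem split} lifts to a $k[[t]]$-automorphism $\tilde{\chi}$ of $k'((s))[[t]]$; applying (P3) with $f=\tilde{\chi}$, $g=\mathrm{id}$, $\chi_{1}=\chi$, $\chi_{2}=\mathrm{id}$ gives $\omega(\tilde{p},\hat{p},\chi)=\omega(\tilde{\chi}(\tilde{p}),\hat{p},\mathrm{id})$, whose value is then read off from \eqref{mainform}. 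Throughout, the point chosen on the $\hat{\pazocal{A}}$-side determines the compatible point on the $\tilde{\pazocal{A}}$-side through $\chi$, so both sides are reduced simultaneously and the hypotheses of (P3) hold on each.

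The step I expect to be the main obstacle is the detection principle underlying the entire argument: that a differential on $\underline{\hat{\pazocal{A}}}$ is recovered from its images in the various $\Omega^{1}_{k'((s))/k}$ attached to the closed and generic points. This is exactly where the hypothesis of relative dimension at most $1$ is indispensable, since for a smooth curve $\Omega^{1}$ is invertible and a section is determined by its stalks, whereas in higher dimension no such reconstruction from point data is available; this is precisely the range to which the construction is confined, in agreement with the remark that independence of the choices fails otherwise. The subsidiary technical points, each routine in dimension $\le 1$, are the verification in every application of (P3) that the relevant localisation of a completion is genuinely an isomorphism, and the existence of the lift $\tilde{\chi}$; both follow from the infinitesimal lifting property recorded in Lemma~\ref{groth} and Remark~\ref{rem split}.
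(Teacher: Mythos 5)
Your proposal is correct and follows essentially the same route as the paper: reduce to the complete case by (P2), pass via (P3) and the injectivity of $\Omega^{1}_{\underline{\hat{\pazocal{A}}}/k}\to\Omega^{1}_{k'((s))/k}$ to the case $\tilde{\pazocal{A}}=\hat{\pazocal{A}}=k'((s))[[t]]$, and then apply (P3) with a lift of $\chi$ as $f$ and $g=\mathrm{id}$ to land in the normalising situation (P1). The only cosmetic difference is the order of operations (you localise at a closed point before trivialising $\chi$, the paper first identifies the two algebras with $B[[t]]$ via a lift of $\chi$ and then localises), and your explicit discussion of the detection principle makes visible what the paper uses implicitly.
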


\begin{proof} 
Suppose that we would like to determine the value of $\omega(\tilde{p},\hat{p},\chi),$ for a general $(\tilde{p},\hat{p},\chi)$ as above. By property $(P2),$ we will assume that $\tilde{\pazocal{A}}$ and $\hat{\pazocal{A}}$ are complete with respect to the ideal defining the closed fiber. 

Next note that by smoothness and completeness, we have isomorphisms $\tilde{\pazocal{A}}\xrightarrow{\sim} \underline{\tilde{\pazocal{A}}}[[t]]$ and $\hat{\pazocal{A}}\xrightarrow{\sim} \underline{\hat{\pazocal{A}}}[[t]],$ which are the identity maps modulo $(t).$  Through these isomorphisms it follows that  
$\chi$ corresponds to an isomorphism $\underline{\tilde{\pazocal{A}}}[[t]]/(t^2) \xrightarrow{\sim} \underline{\hat{\pazocal{A}}}[[t]]/(t^2)$ and therefore can be lifted, non-uniquely, to an isomorphism 
$
\overline{\chi}:  \tilde{\pazocal{A}} \to \hat{\pazocal{A}}.
$
Choosing isomorphisms $\tilde{\phi}:\tilde{\pazocal{A}}\xrightarrow{\sim} B[[t]]$ and $\hat{\phi}:\hat{\pazocal{A}}\xrightarrow{\sim} B[[t]]$ and using (P3) with $\chi_{2}:= \hat{\phi} \circ \overline{\chi} \circ \tilde{\phi}^{-1},$ we assume without loss of generality that $\tilde{\pazocal{A}}=\hat{\pazocal{A}}=B[[t]].$ 

The completion of the local ring of $B[[t]]$ at a closed point is of the form $k'[[s]][[t]]$ for a finite extension $k'/k$ of $k,$ and its localisation at $(t)$ is  $k'((s))[[t]].$ By the functoriality (P3) with respect to the pair of inclusions $B[[t]] \to k'((s))[[t]]$ and the injectivity of the map $\Omega^{1} _{B/k} \to \Omega^{1} _{k'((s))/k},$ we reduce to the case when $\tilde{\pazocal{A}}=\hat{\pazocal{A}}=k'((s))[[t]],$ and an  arbitrary $A$-automorphism $\overline{\chi}$ of  $k'((s))[[t]].$ 

Finally, taking all the rings in (P3) to be $k'((s))[[t]],$ $\chi_1=f=\overline{\chi}$ and $\chi_2=g=id,$ by (P3), we reduce to the situation in (P1), where the value of the 1-form is given by  an explicit formula. This finishes the proof. 
\end{proof}

In the next  sections we will first prove the existence of $\omega$ with the properties (P1) and (P3) and then prove that it satisfies the further properties (P3)-(P10)

\subsection{Existence of $\omega$ and proof of the properties (P1)-(P3)}

\subsubsection{Construction of $ \omega(\tilde{p},\hat{p},\chi)$}

Let $\tilde{\pazocal{A}}$ and $\hat{\pazocal{A}}$ be objects of $\mathcal{C}^{1} _{A},$ and 
$\chi:\tilde{\pazocal{A}}/({t^2}) \to \hat{\pazocal{A}}/({t^2}),$ an isomorphism over $A,$ and $(\tilde{p},\hat{p}) \in \times ^3 \beta(\tilde{\pazocal{A}},\hat{\pazocal{A}},\chi)$ as above. We will define $ \omega(\tilde{p},\hat{p},\chi)$ by first making some choices. Later, we will show that this definition is independent of all the choices.

First, we replace $\tilde{\pazocal{A}} $ and $\hat{\pazocal{A}}$ with their completions along the closed fiber. Note that this does not change $\underline{\hat{\pazocal{A}}}$ and hence $\Omega^{1} _{\underline{\hat{\pazocal{A}}}/k}$ in which $\omega$ takes values.   Similarly, $\chi$ and the triples of points $\tilde{p}$ and $\hat{p}$ give corresponding points on the completions. Therefore, without loss of generality, we will assume that $\tilde{\pazocal{A}} $ and $\hat{\pazocal{A}}$ are complete with respect to $(t).$ From now on, in this section, we will always assume that the $A$-algebras under consideration are complete with respect to the ideal $(t).$ By the smoothness assumptions we have {\it non-canonical} isomorphisms $\tilde{\pazocal{A}}\simeq  \underline{\tilde{\pazocal{A}}}[[t]]$ and $\hat{\pazocal{A}}\simeq  \underline{\hat{\pazocal{A}}}[[t]].$

Let $\overline{\chi}:\tilde{\pazocal{A}} \xrightarrow{\sim} \hat{\pazocal{A}}$  be any lifting of $\chi,$ and $\varphi: \underline{\hat{\pazocal{A}}} \to \hat{\pazocal{A}}$ be any splitting of the canonical projection.  Denote by $\overline{\varphi}$ the corresponding isomorphism $\underline{\hat{\pazocal{A}}}[[t]] \xrightarrow{\sim} \hat{\pazocal{A}}.$ 
We then we define $\omega(\tilde{p},\hat{p},\chi)$  as follows: 
$$
\omega(\tilde{p},\hat{p},\chi):= \Omega( \overline{\varphi}^{-1}( \overline{\chi}(\tilde{p})), \overline{\varphi}^{-1}(\hat{p}) ),
$$
where we will define $\Omega$ below. 

Let  $(\tilde{q},\hat{q})$ be a pair with  $\tilde{q},\hat{q} \in \times ^3\beta(\underline{\hat{\pazocal{A}}}[[t]])$ and are congruent modulo $(t^2).$ Let us write $\tilde{q}=(\tilde{y}_{1},\tilde{y}_{2},\tilde{y_{3}})$ and $\hat{q}=(\hat{y}_{1},\hat{y}_{2},\hat{y_{3}}).$ Then by assumption $\hat{y}_{i}-\tilde{y}_i \in (t^2),$ for all $1 \leq i \leq 3.$   We write $\hat{y}_i=\alpha_{0i}e^{t\alpha_{1i}+t^2\hat{\alpha}_{2i}+\cdots }$ and $\tilde{y}_i=\alpha_{0i}e^{t\alpha_{1i}+t^2\tilde{\alpha}_{2i}+\cdots },$ with $\alpha_{ji}, \hat{\alpha}_{ki},\tilde{\alpha}_{ki} \in \underline{\hat{\pazocal{A}}} ,$ for $0 \leq j \leq 1, \,2 \leq k,$ and $1\leq i \leq 3,$ as in (P1).   We then define

\begin{eqnarray}\label{defomega}
\Omega (\tilde{q},\hat{q}) := \sum _{\sigma \in S_{3}} (-1) ^\sigma \alpha_{1 \sigma(1)} ( \tilde{\alpha}_{2\sigma(3)}-\hat{\alpha}_{2\sigma(3)} ) \cdot d \log (\alpha_{0\sigma(2)}) \in \Omega^{1} _{\underline{\hat{\pazocal{A}}}/k},
\end{eqnarray}
 exactly as in (P1). 

This construction  a priori depends on $\overline{\chi}$ and $\varphi.$ Therefore, we will denote the $\omega $ defined above temporarily by $\omega_{\overline{\chi},\varphi}.$ We will show below that, in fact, it is independent of $\overline{\chi}$ and $\varphi.$  In order to show independence with respect to the choice of the splitting $\varphi,$ we will start with the following lemma which deals with the split case. 

\begin{lemma}
Let $\pazocal{A}:=k'((s))[[t]],$ where $k'/k$ is a finite extension. Suppose that $(\tilde{p},\hat{p}) \in \times ^3 \beta (\pazocal{A},\pazocal{A},id),$ and $\psi:\pazocal{A} \to \pazocal{A}$ be any continuous $A$-algebra isomorphism which induces the identity map on $\underline{\pazocal{A}}.$ Then 
$$
\Omega(\tilde{p},\hat{p})=\Omega(\psi(\tilde{p}), \psi(\hat{p})).
$$
\end{lemma}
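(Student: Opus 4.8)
The plan is to make the action of $\psi$ on the exponential coordinates entirely explicit and then to observe that the only surviving change is an antisymmetric one which cancels in the $S_3$-sum defining $\Omega$. Throughout, recall that $(\tilde p,\hat p)\in\times^3\beta(\pazocal{A},\pazocal{A},\mathrm{id})$ means $\hat y_i-\tilde y_i\in(t^2)$, so that in the expansions $\tilde y_i=\alpha_{0i}e^{t\alpha_{1i}+t^2\tilde\alpha_{2i}+\cdots}$ and $\hat y_i=\alpha_{0i}e^{t\alpha_{1i}+t^2\hat\alpha_{2i}+\cdots}$ the data $\alpha_{0i}$ and $\alpha_{1i}$ are \emph{common} to $\tilde y_i$ and $\hat y_i$, the two series first differing at order $t^2$.

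First I would pin down the structure of $\psi$. As a continuous $A$-algebra map it fixes $t$, and since $k'/k$ is finite (hence separable) and $\psi$ fixes each $c\in k'$ modulo $t$, a Hensel-type argument fixes $k'$ pointwise: $\psi(c)$ is a root of the minimal polynomial of $c$ congruent to $c$ modulo $t$, and as the derivative of this polynomial at $c$ is a unit, $\psi(c)=c$. Hence $\psi$ is determined by $\psi(s)$, and the hypothesis $\psi\equiv\mathrm{id}\ (\mathrm{mod}\ t)$ forces $\psi(s)=s+tg$ with $g=b_0+b_1t+\cdots\in k'((s))[[t]]$; by density of $k'[s,s^{-1}][t]$ this yields the $t$-adically convergent substitution formula $\psi(a)=a(s+tg)$ for every $a\in k'((s))$.

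Next I would compute the transformed coordinates. Since $\psi\equiv\mathrm{id}\ (\mathrm{mod}\ t)$ the constant terms are unchanged, $\alpha_{0i}^{\mathrm{new}}=\alpha_{0i}$. Expanding $\log(\psi(y_i)/\alpha_{0i})$ via $\psi(\alpha_{ji})=\alpha_{ji}(s+tg)$ and collecting powers of $t$ gives $\alpha_{1i}^{\mathrm{new}}=\alpha_{1i}+b_0(\log\alpha_{0i})'$, where $'=d/ds$ and $(\log\alpha_{0i})'=\alpha_{0i}'/\alpha_{0i}$, together with a second-order coefficient of the shape $\alpha_{2i}^{\mathrm{new}}=\alpha_{2i}+b_1(\log\alpha_{0i})'+\tfrac12 b_0^2(\log\alpha_{0i})''+b_0\alpha_{1i}'$. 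The decisive observation is that every $\psi$-correction to the $t^2$-coefficient involves only $\alpha_{0i}$ and $\alpha_{1i}$, which $\tilde y_i$ and $\hat y_i$ share; therefore these corrections cancel in the difference and $\tilde\alpha_{2i}^{\mathrm{new}}-\hat\alpha_{2i}^{\mathrm{new}}=\tilde\alpha_{2i}-\hat\alpha_{2i}=:\delta_i$.

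Finally I would substitute into \eqref{defomega}. With $\alpha_{0i}$ and the differences $\delta_i$ invariant and $\alpha_{1i}\mapsto\alpha_{1i}+b_0(\log\alpha_{0i})'$, one obtains
\[
\Omega(\psi(\tilde p),\psi(\hat p))=\Omega(\tilde p,\hat p)+b_0\sum_{\sigma\in S_3}(-1)^\sigma(\log\alpha_{0\sigma(1)})'\,\delta_{\sigma(3)}\,d\log\alpha_{0\sigma(2)}.
\]
Writing $d\log\alpha_{0\sigma(2)}=(\log\alpha_{0\sigma(2)})'\,ds$, the extra term equals $b_0\,ds$ times $\sum_{\sigma}(-1)^\sigma(\log\alpha_{0\sigma(1)})'(\log\alpha_{0\sigma(2)})'\delta_{\sigma(3)}$; grouping the permutations by the value of $\sigma(3)$, the two with a fixed $\sigma(3)$ differ by the transposition of $\sigma(1)$ and $\sigma(2)$, carry opposite signs, and have equal summand since $(\log\alpha_{0\sigma(1)})'(\log\alpha_{0\sigma(2)})'$ is symmetric in those indices. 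Hence the extra term vanishes and $\Omega(\psi(\tilde p),\psi(\hat p))=\Omega(\tilde p,\hat p)$. The main obstacle I anticipate is the bookkeeping producing these formulas — in particular checking that the genuinely nontrivial second-order corrections (the $b_1$- and $b_0^2$-terms) drop out of $\tilde\alpha_{2i}^{\mathrm{new}}-\hat\alpha_{2i}^{\mathrm{new}}$; this cancellation, together with the antisymmetry killing the $\alpha_1$-shift, is exactly what makes $\Omega$ well-defined, and the antisymmetry step visibly uses that $\underline{\pazocal{A}}$ carries a single coordinate $s$ (so the product of log-derivatives is a symmetric scalar), matching the paper's insistence that relative dimension $\le 1$ is essential.
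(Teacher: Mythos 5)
Your proposal is correct and follows essentially the same route as the paper: expand $\psi(s)=s+at+bt^2+\cdots$, push $\psi$ through the exponential coordinates, observe that all corrections to the $t^2$-coefficient depend only on the shared data $\alpha_{0i},\alpha_{1i}$ and hence cancel in the differences $\tilde\alpha_{2i}-\hat\alpha_{2i}$, and kill the remaining term coming from the shift of $\alpha_{1i}$ by the symmetry of $(\log\alpha_{0\sigma(1)})'(\log\alpha_{0\sigma(2)})'$ under the sign-reversing transposition. Your second-order coefficient agrees with the paper's formula (\ref{psiexponent}) after rewriting $(\log\alpha_0)''$ in terms of $\alpha_0''/\alpha_0$ and $(\alpha_0'/\alpha_0)^2$, so the two arguments coincide.
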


\begin{proof} 
Since $\psi$ is assumed to be an $A$-algebra homomorphism, it is completely determined by its restriction to $k'((s)). $ Since $\psi$ is identity on $k,$ reduces to identity modulo $(t),$ and $\Omega^{1} _{k'/k}=0,$ $\psi$ is identity on $k'.$   Therefore, it is completely determined by its value on $s.$ Suppose that $\psi(s) =s+a t+b t^2+O(t^3),$ with $a, b \in k'((s)).$ Then we have that $\psi(\alpha_0 e^{\alpha_1 t +\alpha_2 t^2})=$
\begin{eqnarray}\label{psiexponent}
&=&(\alpha_0+\alpha_0 '  (a t+b t^2)+\frac{\alpha_0 ''}{2}(at)^2)e^{(\alpha_1+\alpha_1' at)t+\alpha_2t^2}  +O(t^3) \nonumber \\
&=&\alpha_0 e^{\frac{\alpha_0'}{\alpha_0}at+ ( \frac{\alpha_0'}{\alpha_0}b+\frac{\alpha_0''}{2\alpha_0}a^2 -\frac{1}{2}(\frac{\alpha_0 '}{ \alpha_0}a)^2)t^2} e^{\alpha_1t+(\alpha_1' a+\alpha_2)t^2} +O(t^3) \nonumber \\
&=&\alpha_0e^{(\alpha_1+\frac{\alpha_0'}{\alpha_0}a)t + (\alpha_2+\alpha_1' a+\frac{\alpha_0'}{\alpha_0}b+\frac{\alpha_0''}{2\alpha_0}a^2 -\frac{1}{2}(\frac{\alpha_0 '}{ \alpha_0}a)^2 ) t^2}+O(t^3),
\end{eqnarray}
where for a function $f \in k'((s)),$ we let $f' \in k'((s))$ denote the derivative with respect to $s.$ More precisely, $f' \in k'((s))$ is the unique series such that $df=f'ds$ in $\hat{\Omega}^{1} _{k'((s))/k}.$ 

Then using (\ref{psiexponent}) and  (\ref{defomega}) the difference $\Omega(\psi(\tilde{p}), \psi(\hat{p}))-\Omega(\tilde{p},\hat{p})$ is  given by 

\begin{eqnarray*}
& &\sum _{\sigma \in S_{3}}(-1)^{\sigma} a\cdot \frac{\alpha_{0\sigma(1)}'}{\alpha_{0\sigma(1)}}\cdot  ( \tilde{\alpha}_{2\sigma(3)}-\hat{\alpha}_{2\sigma(3)} ) \cdot d \log (\alpha_{0\sigma(2)})\\
& &=\sum _{\sigma \in S_{3}}(-1)^{\sigma} a\cdot \frac{\alpha_{0\sigma(1)}'}{\alpha_{0\sigma(1)}}\cdot  ( \tilde{\alpha}_{2\sigma(3)}-\hat{\alpha}_{2\sigma(3)} ) \cdot \frac{\alpha_{0\sigma(2)} ' }{\alpha_{0\sigma(2)}}ds
=0.
\end{eqnarray*}

\end{proof}

\begin{corollary}\label{indepphi}
If $\overline{\chi}:\tilde{\pazocal{A}} \xrightarrow{\sim} \hat{\pazocal{A}}$  is any lifting of $\chi,$ and $\varphi, \varphi': \underline{\hat{\pazocal{A}}} \to \hat{\pazocal{A}}$ are two splittings of the canonical projection $\hat{\pazocal{A}} \to \underline{\hat{\pazocal{A}}}$ then 
$$
\omega_{\overline{\chi}, \varphi '} =\omega_{\overline{\chi}, \varphi }. 
$$
\end{corollary}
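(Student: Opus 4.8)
The plan is to repackage the change of splitting as a single automorphism and then appeal to the preceding lemma. Write $\overline{\varphi},\overline{\varphi'}\colon \underline{\hat{\pazocal{A}}}[[t]]\xrightarrow{\sim}\hat{\pazocal{A}}$ for the isomorphisms attached to $\varphi,\varphi'$ as in Remark \ref{rem split}; both fix $t$ and reduce to the identity on $\underline{\hat{\pazocal{A}}}$. Set $\psi:=\overline{\varphi'}^{-1}\circ\overline{\varphi}$, which is therefore an $A$-algebra automorphism of $\underline{\hat{\pazocal{A}}}[[t]]$ fixing $t$ and inducing the identity modulo $(t)$. Put $\tilde{q}:=\overline{\varphi}^{-1}(\overline{\chi}(\tilde{p}))$ and $\hat{q}:=\overline{\varphi}^{-1}(\hat{p})$; since $\overline{\chi}$ lifts $\chi$, these are congruent modulo $(t^2)$, so $(\tilde{q},\hat{q})\in\times^3\beta(\underline{\hat{\pazocal{A}}}[[t]])$ and $\omega_{\overline{\chi},\varphi}=\Omega(\tilde{q},\hat{q})$. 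Using $\overline{\varphi'}^{-1}=\psi\circ\overline{\varphi}^{-1}$ one finds $\omega_{\overline{\chi},\varphi'}=\Omega(\psi(\tilde{q}),\psi(\hat{q}))$, where $\psi(\tilde{q})\equiv\psi(\hat{q})\pmod{(t^2)}$ because $\psi(t)=t$. Thus the corollary reduces to the single identity $\Omega(\tilde{q},\hat{q})=\Omega(\psi(\tilde{q}),\psi(\hat{q}))$ for an automorphism $\psi$ of the indicated type.

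Next I would reduce this identity, an equality in $\Omega^1_{\underline{\hat{\pazocal{A}}}/k}$, to the split situation treated in the preceding lemma, along the same lines as the reduction in the uniqueness proof. Since $\underline{\hat{\pazocal{A}}}$ is a smooth integral domain of relative dimension at most $1$ over $k$, the module $\Omega^1_{\underline{\hat{\pazocal{A}}}/k}$ is torsion-free of rank at most $1$. Completing $\underline{\hat{\pazocal{A}}}$ at a closed point and inverting the uniformizer produces a field $k'((s))$ with $k'/k$ finite, and the induced map $\Omega^1_{\underline{\hat{\pazocal{A}}}/k}\to\Omega^1_{k'((s))/k}$ is injective; here a single closed point already suffices, precisely because the rank is at most one (in relative dimension $0$ one instead embeds the transcendence-degree-one field into such a $k'((s))$ by completing at a place). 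Because the expansion coefficients $\alpha_{0i},\alpha_{1i},\tilde{\alpha}_{2i},\hat{\alpha}_{2i}$ entering (\ref{defomega}) are formed entrywise from the entries of $\tilde{q}$ and $\hat{q}$, the formation of $\Omega$ commutes with this base change, and the automorphism $\psi$ extends to a continuous $A$-automorphism of $k'((s))[[t]]$ that is still the identity modulo $(t)$.

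Having passed to $\pazocal{A}=k'((s))[[t]]$, the required equality $\Omega(\tilde{q},\hat{q})=\Omega(\psi(\tilde{q}),\psi(\hat{q}))$ is exactly the conclusion of the preceding lemma applied to this $\psi$. Injectivity of $\Omega^1_{\underline{\hat{\pazocal{A}}}/k}\to\Omega^1_{k'((s))/k}$ then upgrades it to the desired equality over $\underline{\hat{\pazocal{A}}}$, yielding $\omega_{\overline{\chi},\varphi'}=\omega_{\overline{\chi},\varphi}$.

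I expect the main obstacle to lie in the second step rather than the first: one must guarantee that testing the identity at the localized completions genuinely detects vanishing in $\Omega^1_{\underline{\hat{\pazocal{A}}}/k}$, and this is exactly where the hypothesis $\dim\leq 1$ is indispensable. Tracing the lemma's computation, the discrepancy $\Omega(\psi(\tilde{q}),\psi(\hat{q}))-\Omega(\tilde{q},\hat{q})$ is a sum of terms $(-1)^\sigma\,(D\log\alpha_{0\sigma(1)})\,(\tilde{\alpha}_{2\sigma(3)}-\hat{\alpha}_{2\sigma(3)})\,d\log\alpha_{0\sigma(2)}$, where $D$ is the $t$-linear part of $\psi$ on $\underline{\hat{\pazocal{A}}}$; its vanishing rests on the symmetry $D\log u\cdot d\log v=D\log v\cdot d\log u$, which holds only because $\Omega^1$ has rank at most one, so that a derivation and a one-form are proportional scalars. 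This is the precise point at which the argument, and hence independence of the splitting, would fail in higher dimension, consistent with the remark in the introduction.
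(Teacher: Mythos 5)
Your proposal is correct and follows essentially the same route as the paper: set $\psi:=\overline{\varphi'}^{-1}\circ\overline{\varphi}$, reduce the claim to $\Omega(\tilde{q},\hat{q})=\Omega(\psi(\tilde{q}),\psi(\hat{q}))$, and then reduce to $k'((s))[[t]]$ via the injection $\Omega^{1}_{\underline{\hat{\pazocal{A}}}/k}\to\Omega^{1}_{k'((s))/k}$ obtained by completing at a closed point, where the preceding lemma applies. Your closing remarks on the role of the rank-one hypothesis are consistent with the computation in that lemma.
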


\begin{proof}
Note that $\omega_{\overline{\chi}, \varphi }(\tilde{p},\hat{p},\chi)= \Omega( \overline{\varphi}^{-1}( \overline{\chi}(\tilde{p})), \overline{\varphi}^{-1}(\hat{p}) )$ and $\omega_{\overline{\chi}, \varphi} ' (\tilde{p},\hat{p},\chi)= \Omega( \overline{\varphi} '^{-1}( \overline{\chi}(\tilde{p})), \overline{\varphi} '^{-1}(\hat{p}) ).$ Therefore, letting $\tilde{q}:= \overline{\varphi}^{-1}( \overline{\chi}(\tilde{p}))$ and $\hat{q}:=\overline{\varphi}^{-1}(\hat{p}) \in \underline{\hat{\pazocal{A}}}[[t]]$ and $\psi:= \overline{\varphi} '^{-1} \circ \overline{\varphi},$ we need to show that $\Omega (\tilde{q},\hat{q})=\Omega (\psi(\tilde{q}),\psi(\hat{q})).$ This is exactly the statement of the above lemma when $\underline{\hat{\pazocal{A}}} \simeq k'((s)).$ However, we can always reduce to this case by choosing a closed point of $\hat{\pazocal{A}}$ and noting that the  field of fractions of the completion of $\underline{\hat{\pazocal{A}}}$ at this closed point is isomorphic to $k'((s))$ and the induced map $\Omega^{1} _{\underline{\hat{\pazocal{A}}} /k} \to \Omega^{1} _{k'((s))/k} $ is an injection.
\end{proof}

Because of the corollary above, from now on we will use the notation $\omega_{\overline{\chi}}$ instead of $\omega_{\overline{\chi}, \varphi}.$ Next we show independence with respect to the lifting $\overline{\chi}$ of $\chi.$ 
 
\begin{lemma}
If $\overline{\chi}$ and $\overline{\chi}'$ are  liftings of $\chi$ then $\omega_{\overline{\chi}}=\omega_{\overline{\chi}'}.$ Therefore, $\omega$ is well-defined and does not depend on $\varphi$ and $\overline{\chi}.$
 \end{lemma}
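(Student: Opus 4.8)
The plan is to reduce, exactly as in Corollary~\ref{indepphi}, to the split local case $\underline{\hat{\pazocal{A}}}=k'((s))$ and then to recognise the difference $\omega_{\overline{\chi}'}-\omega_{\overline{\chi}}$ as the antisymmetrisation over $S_3$ of an expression symmetric in two of its three slots, which forces it to vanish. By Corollary~\ref{indepphi} I may use one and the same splitting $\varphi$ for both liftings, so that
$$
\omega_{\overline{\chi}}(\tilde p,\hat p,\chi)=\Omega\bigl(\overline{\varphi}^{-1}(\overline{\chi}(\tilde p)),\,\overline{\varphi}^{-1}(\hat p)\bigr),\qquad
\omega_{\overline{\chi}'}(\tilde p,\hat p,\chi)=\Omega\bigl(\overline{\varphi}^{-1}(\overline{\chi}'(\tilde p)),\,\overline{\varphi}^{-1}(\hat p)\bigr)
$$
have the \emph{same} second argument $\hat q:=\overline{\varphi}^{-1}(\hat p)$. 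Setting $\psi:=\overline{\chi}'\circ\overline{\chi}^{-1}$, an $A$-automorphism of $\hat{\pazocal{A}}$ that reduces to the identity modulo $(t^2)$ because both liftings induce $\chi$, and conjugating, $\Psi:=\overline{\varphi}^{-1}\circ\psi\circ\overline{\varphi}$ is an automorphism of $\underline{\hat{\pazocal{A}}}[[t]]$, again the identity modulo $(t^2)$, with $\overline{\varphi}^{-1}(\overline{\chi}'(\tilde p))=\Psi(\tilde q)$ for $\tilde q:=\overline{\varphi}^{-1}(\overline{\chi}(\tilde p))$. The assertion thus collapses to the single identity $\Omega(\tilde q,\hat q)=\Omega(\Psi(\tilde q),\hat q)$, where now $\Psi$ acts on only one of the two arguments.

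To evaluate the right-hand side I would choose a closed point of $\hat{\pazocal{A}}$, pass to the fraction field $k'((s))$ of the completion of $\underline{\hat{\pazocal{A}}}$ there, and invoke the injectivity of $\Omega^{1}_{\underline{\hat{\pazocal{A}}}/k}\to\Omega^{1}_{k'((s))/k}$ (as in Corollary~\ref{indepphi}) to reduce to $\underline{\hat{\pazocal{A}}}=k'((s))$. There $\Psi$ is a continuous $A$-automorphism of $k'((s))[[t]]$ which, being the identity on $k'$ (since $\Omega^{1}_{k'/k}=0$) and modulo $(t^2)$, is determined by $\Psi(s)=s+bt^2+O(t^3)$; crucially there is no $t^1$-term. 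Specialising the expansion (\ref{psiexponent}) to $a=0$ then shows that $\Psi$ fixes both the constant term $\alpha_{0i}$ and the $t$-coefficient $\alpha_{1i}$ of each $\tilde y_i$ and alters only the $t^2$-coefficient, by $\tilde\alpha_{2i}\mapsto\tilde\alpha_{2i}+b\,\frac{\alpha_{0i}'}{\alpha_{0i}}$.

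Substituting into (\ref{defomega}) and writing $d\log\alpha_{0\sigma(2)}=\frac{\alpha_{0\sigma(2)}'}{\alpha_{0\sigma(2)}}\,ds$, I expect the difference to come out as
$$
\Omega(\Psi(\tilde q),\hat q)-\Omega(\tilde q,\hat q)=b\,ds\sum_{\sigma\in S_3}(-1)^{\sigma}\,\alpha_{1\sigma(1)}\,\frac{\alpha_{0\sigma(2)}'}{\alpha_{0\sigma(2)}}\,\frac{\alpha_{0\sigma(3)}'}{\alpha_{0\sigma(3)}}.
$$
The summand is symmetric under interchanging the indices $\sigma(2)$ and $\sigma(3)$, whereas $(-1)^{\sigma}$ changes sign under it, so pairing each $\sigma$ with $\sigma\cdot(2\,3)$ cancels the sum termwise; hence the difference is $0$ and $\omega_{\overline{\chi}}=\omega_{\overline{\chi}'}$. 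Together with Corollary~\ref{indepphi} this shows $\omega$ depends on neither $\varphi$ nor $\overline{\chi}$.

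I expect the real content, as opposed to the bookkeeping, to lie in the reduction to $\underline{\hat{\pazocal{A}}}=k'((s))$. In the split-case lemma preceding Corollary~\ref{indepphi} the automorphism acted on \emph{both} arguments, so that the $t^2$-coefficients moved together and only the change in the $t$-coefficient $\alpha_{1}$ survived, cancelling by the antisymmetry in the slots $\sigma(1),\sigma(2)$; here $\Psi$ moves only one argument, $\alpha_{1}$ is fixed, and the surviving change sits in the $t^2$-coefficient, cancelling instead by the antisymmetry in the slots $\sigma(2),\sigma(3)$. In both cases the cancellation rests on two of the three slots carrying the \emph{proportional} quantities $\frac{\alpha_0'}{\alpha_0}$ and $d\log\alpha_0$, and this proportionality holds only because $\Omega^{1}$ has rank one, that is, only because the objects have relative dimension at most $1$ --- precisely the hypothesis singled out in the introduction as indispensable.
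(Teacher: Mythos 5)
Your proposal is correct and follows essentially the same route as the paper: reduce via a common splitting and the injectivity of $\Omega^{1}_{\underline{\hat{\pazocal{A}}}/k}\to\Omega^{1}_{k'((s))/k}$ to an automorphism $\iota$ of $k'((s))[[t]]$ that is the identity modulo $(t^2)$, observe from the expansion (\ref{psiexponent}) with $a=0$ that only the $t^{2}$-coefficients shift by $b\,\alpha_{0i}'/\alpha_{0i}$, and kill the resulting sum by the antisymmetry of $(-1)^{\sigma}$ against the symmetry of $\frac{\alpha_{0\sigma(2)}'}{\alpha_{0\sigma(2)}}\frac{\alpha_{0\sigma(3)}'}{\alpha_{0\sigma(3)}}$ in the slots $\sigma(2),\sigma(3)$. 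This is exactly the paper's computation, differing only in that you phrase the automorphism as the conjugate $\overline{\varphi}^{-1}\circ\overline{\chi}'\circ\overline{\chi}^{-1}\circ\overline{\varphi}$ rather than transporting everything through the splitting first.
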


\begin{proof} 
Using a splitting $\varphi: \underline{\hat{\pazocal{A}}}\to \hat{\pazocal{A}}$ as in the definition of $\omega,$ we  reduce to the case when $\tilde{\pazocal{A}}=\hat{\pazocal{A}}=\underline{\hat{\pazocal{A}}}[[t]]$ and $\chi=id.$ Using the argument at the end of the proof of Corollary \ref{indepphi}, we reduce further to the case when $\underline{\hat{\pazocal{A}}}=k'((s)).$ 
 If $\overline{\chi}$ and $\overline{\chi}'$ are two liftings of $id,$ we need to prove that: 
$$
\Omega(\overline{\chi}(\tilde{p}), \hat{p})=\Omega(\overline{\chi}'(\tilde{p}), \hat{p}).
$$
Letting $\overline{\chi}'=\iota \circ \overline{\chi},$ we have $\iota|_{t^2}=id$ and we need to prove that for any  $\tilde{p}$ and $\hat{p}$ such that $\tilde{p}|_{t^2}=\hat{p}|_{t^2},$ we have 
$$
\Omega(\iota(\tilde{p}),\hat{p})=\Omega(\tilde{p},\hat{p}).
$$
 We have $\iota(s)=s+bt^2+O(t^3),$ with $b \in k'((s)),$ and   by the formula (\ref{psiexponent}), we have 
$$
\iota(\alpha_0 e^{\alpha_1 t +\alpha_2 t^2})=\alpha_0e^{\alpha_1 t + (\alpha_2+\frac{\alpha_0'}{\alpha_0}b) t^2}+O(t^3).
$$
Then the difference $\Omega(\iota(\tilde{p}),\hat{p})-\Omega(\tilde{p},\hat{p})$ is given by 
\begin{eqnarray*}
& &\sum _{\sigma \in S_{3}}(-1)^{\sigma}  \alpha_{1 \sigma(1)} \cdot  \big( b \frac{\alpha_{0\sigma(3)}' }{\alpha_{0\sigma(3)}} \big) \cdot d \log (\alpha_{0\sigma(2)})=b\sum _{\sigma \in S_{3}}(-1)^{\sigma}  \alpha_{1 \sigma(1)} \cdot   \frac{\alpha_{0\sigma(3)}' }{\alpha_{0\sigma(3)}}  \frac{\alpha_{0\sigma(2)}'}{\alpha_{0\sigma(2)}}ds=0.  \\
\end{eqnarray*}
This finishes the proof of the lemma. 
\end{proof}

 \subsection{The additional properties (P4)-(P10) of the map $\omega$}

 In this section,  we will show that the construction above satisfies the following fundamental properties which will be needed below. 
 
 \subsubsection{(P4)  Interchange of components.} If we interchange $\tilde{p}$ and $\hat{p}$ then they are related by the following formula 

$$
\underline{\chi}_*(\omega(\hat{p},\tilde{p},\chi ^{-1}))=-\omega(\tilde{p},\hat{p},\chi),
$$
where $\underline{\chi}_{*}: \Omega^1 _{ \underline{\tilde{\pazocal{A}}} /k } \to  \Omega^1 _{ \underline{\hat{\pazocal{A}}} /k }$ is the  map induced by $\chi$.

\begin{proof} 
First note that $\Omega$ has the property that $\Omega(\tilde{q},\hat{q})=-\Omega(\hat{q},\tilde{q}).$ Let $\overline{\chi}:\tilde{\pazocal{A}} \xrightarrow{\sim} \hat{\pazocal{A}}$  be a lifting of $\chi,$ and $\varphi: \underline{\hat{\pazocal{A}}} \to \hat{\pazocal{A}}$ a splitting of the canonical projection 
as above. Then we have, by definition,  $
\omega(\tilde{p},\hat{p},\chi)= \Omega( \overline{\varphi}^{-1}( \overline{\chi}(\tilde{p})), \overline{\varphi}^{-1}(\hat{p}) ).
$

On the other hand, in order to compute $\omega(\hat{p},\tilde{p},\chi ^{-1}),$ we can use the  splitting  $ \psi:=\overline{\chi} ^{-1} \circ \varphi \circ \underline{\chi}$ from  $\underline{\tilde{\pazocal{A}}}$ to $\tilde{\pazocal{A}},$ and the lifting $\overline{\chi}^{-1}$ of $\chi^{-1}.$   Note that $\overline{\psi}$ can be described as follows. Let $\overline{\underline{\chi}}$ denote the isomorphism from $\underline{\tilde{\pazocal{A}}}[[t]] $ to $\underline{\hat{\pazocal{A}}}[[t]] ,$ induced by $\underline{\chi}: \underline{\tilde{\pazocal{A}}} \to \underline{\hat{\pazocal{A}}}.$ Then $\overline{\psi}= \overline{\chi}^{-1} \circ\overline{\varphi} \circ\overline{\underline{\chi}}$ and we have $\omega(\hat{p},\tilde{p},\chi ^{-1})=$
$$
\Omega ( \overline{\psi}^{-1} ( \overline{\chi}^{-1}(\hat{p})), \overline{\psi} ^{-1}(\tilde{p} ))=\Omega ( \overline{\underline{\chi}}^{-1} ( \overline{\varphi}^{-1}(\hat{p})), \overline{\underline{\chi}}^{-1}( \overline{\varphi}^{-1}( \overline{\chi}(\tilde{p})))).
$$
By transport of structure, $\underline{\chi}_{*}$ of the last expression is $\Omega ( \overline{\varphi}^{-1}(\hat{p}),  \overline{\varphi}^{-1}( \overline{\chi}(\tilde{p}))).$ Therefore, 
$$
\underline{\chi}_{*}(\omega(\hat{p},\tilde{p},\chi ^{-1}))=\Omega ( \overline{\varphi}^{-1}(\hat{p}),  \overline{\varphi}^{-1}( \overline{\chi}(\tilde{p})))=-\Omega ( \overline{\varphi}^{-1}( \overline{\chi}(\tilde{p})),\overline{\varphi}^{-1}(\hat{p}))=-\omega(\tilde{p},\hat{p},\chi),
$$
which finishes the proof. 
\end{proof}

 \subsubsection{ (P5) Transitivity.} If we have $\pazocal{A}_{1}, \pazocal{A}_{2}$ and $\pazocal{A}_{3}$ and $\chi_{12}: \pazocal{A}_{1}|_{t^2} \to \pazocal{A}_{2}|_{t^2}$
 and $\chi_{23}: \pazocal{A}_{2}|_{t^2} \to \pazocal{A}_{3}|_{t^2}$ then 
 
 $$
\underline{\chi}_{23 *}(\omega(p_1,p_2,\chi_{12}))+ \omega(p_2,p_3,\chi_{23})=\omega(p_1,p_3,\chi_{13}),
 $$
 where $\chi_{13}= \chi _{23} \circ \chi_{12}$
 
 \begin{proof}
 First note that $\Omega(q_{1},q_2)+\Omega(q_2.q_3)=\Omega(q_1,q_3).$ Let $\overline{\chi}_{12}:\pazocal{A}_1 \to \pazocal{A}_2$ is a lifting of $\chi_{12}$ and $\overline{\chi}_{23}:\pazocal{A}_2 \to \pazocal{A}_3$ is a lifting of $\chi_{23}.$ Then $\overline{\chi}_{13}:=\overline{\chi}_{23} \circ \overline{\chi}_{12}$ is a lifting of $\chi_{13}.$ Let $\varphi: \underline{\pazocal{A}}_{3}\to \pazocal{A}_{3}$ be  a splitting. Then we have 
 \begin{eqnarray}\label{trans1}
 \omega(p_2,p_3,\chi_{23})=\Omega(\overline{\varphi} ^{-1}( \overline{\chi}_{23}(p_2)),\overline{\varphi} ^{-1}(p_3)).
 \end{eqnarray}
 
 Note that $\varphi_{2}:=\overline{\chi}_{23} ^{-1} \circ \varphi \circ\underline{\chi}_{23}:\underline{\pazocal{A}}_{2} \to \pazocal{A}_{2}$ is a splitting with $\overline{\varphi}_{2}=\overline{\chi}_{23} ^{-1} \circ \overline{\varphi} \circ \overline{\underline{\chi}}_{23}.$ Then we have 
 $$
 \omega(p_1,p_2,\chi_{12})=\Omega(\overline{\varphi}_2 ^{-1}( \overline{\chi}_{12}(p_1)),\overline{\varphi}_2 ^{-1}(p_2))=\Omega(\overline{\underline{\chi}}_{23} ^{-1}(\overline{\varphi} ^{-1}( \overline{\chi}_{13}(p_1))),\overline{\underline{\chi}}_{23} ^{-1}(\overline{\varphi} ^{-1}( \overline{\chi}_{23}(p_2)))),
 $$
 which implies that 
  \begin{eqnarray}\label{trans2}
  \underline{\chi}_{23,*}( \omega(p_1,p_2,\chi_{12}))=\Omega(\overline{\varphi} ^{-1}( \overline{\chi}_{13}(p_1)),\overline{\varphi} ^{-1}( \overline{\chi}_{23}(p_2))).
  \end{eqnarray}
 Similarly, 
   \begin{eqnarray}\label{trans3}
 \omega(p_1,p_3,\chi_{13})=\Omega(\overline{\varphi} ^{-1}( \overline{\chi}_{13}(p_1)),\overline{\varphi} ^{-1}( p_3)).
  \end{eqnarray}
 Summing (\ref{trans1}), (\ref{trans2}), and (\ref{trans3}), gives the result.  
 \end{proof}

  \subsubsection{(P6) Multilinearity.} The map $\omega(\cdot, \cdot, \chi)$ is multilinear on each of the three  components of $\times ^{3}\beta(\tilde{\pazocal{A}}, \hat{\pazocal{A}},\chi).$ 
\begin{proof}
This follows  from the same statement for $\Omega.$ Namely, if $(\tilde{y}_{i},\hat{y}_{i}) \in \beta(\underline{\hat{\pazocal{A}}}[[t]], \underline{\hat{\pazocal{A}}}[[t]],id) ,$ and $(\tilde{y}_{i}',\hat{y}'_{i}) \in \beta(\underline{\hat{\pazocal{A}}}[[t]], \underline{\hat{\pazocal{A}}}[[t]],id) ,$ for $1\leq i \leq 3,$ then 
$$
\Omega ((\tilde{y}_{1}\tilde{y}_{1}',\tilde{y}_2,\tilde{y}_3), (\hat{y}_{1}\hat{y}_{1}',\hat{y}_2,\hat{y}_3))=\Omega ((\tilde{y}_{1},\tilde{y}_2,\tilde{y}_3), (\hat{y}_{1},\hat{y}_2,\hat{y}_3))+\Omega ((\tilde{y}_{1}',\tilde{y}_2,\tilde{y}_3), (\hat{y}_{1}',\hat{y}_2,\hat{y}_3)), 
$$
$$
\Omega ((\tilde{y}_1,\tilde{y}_{2}\tilde{y}_{2}',\tilde{y}_3), (\hat{y}_1,\hat{y}_{2}\hat{y}_{2}',\hat{y}_3))=\Omega ((\tilde{y}_{1},\tilde{y}_2,\tilde{y}_3), (\hat{y}_{1},\hat{y}_2,\hat{y}_3))+\Omega ((\tilde{y}_{1},\tilde{y}_2',\tilde{y}_3), (\hat{y}_{1},\hat{y}_2',\hat{y}_3)), 
$$
and
$$
\Omega ((\tilde{y}_1,\tilde{y}_2,\tilde{y}_{3}\tilde{y}_{3}'), (\hat{y}_1,\hat{y}_2,\hat{y}_{3}\hat{y}_{3}'))=\Omega ((\tilde{y}_{1},\tilde{y}_2,\tilde{y}_3), (\hat{y}_{1},\hat{y}_2,\hat{y}_3))+\Omega ((\tilde{y}_{1},\tilde{y}_2,\tilde{y}_3'), (\hat{y}_{1},\hat{y}_2,\hat{y}_3')).
$$

 \end{proof}
 
  \subsubsection{(P7) Anti-symmetry.} The map $\omega(\cdot,\cdot,\chi)$ is anti-symmetric with respect to the action of $S_3$ on  $\times ^{3}\beta(\tilde{\pazocal{A}}, \hat{\pazocal{A}},\chi).$ 
 
 \begin{proof}
This follows from the same statement for $\Omega.$ More precisely, if  $(\tilde{y}_{i},\hat{y}_{i}) \in \beta(\underline{\hat{\pazocal{A}}}[[t]], \underline{\hat{\pazocal{A}}}[[t]],id) ,$ for $1\leq i\leq 3,$ then 
$$
\Omega ((\tilde{y}_{\sigma(1)},\tilde{y}_{\sigma(2)},\tilde{y}_{\sigma(3)}), (\hat{y}_{\sigma(1)},\hat{y}_{\sigma(2)},\hat{y}_{\sigma(3)}))=(-1)^{\sigma}\Omega ((\tilde{y}_{1},\tilde{y}_2,\tilde{y}_3), (\hat{y}_{1},\hat{y}_2,\hat{y}_3)),
$$
for any $\sigma \in S_{3}.$
 \end{proof}
 
\begin{corollary}
The properties (P6) and (P7) imply that $\omega(\cdot,\cdot,\chi) $ induces a map 
$$\omega(\cdot,\cdot,\chi):\Lambda^3 _{\mathbb{Z}} \beta(\tilde{\pazocal{A}},\hat{\pazocal{A}},\chi)\to \Omega^{1}_{\underline{\hat{\pazocal{A}}}/k},$$ which we will denote by the same notation.  
\end{corollary}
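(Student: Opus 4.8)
The plan is to deduce the factorization from the universal properties of the tensor product and the exterior power, paying attention to the distinction between anti-symmetry and the alternating condition over $\mathbb{Z}$. Write $\beta:=\beta(\tilde{\pazocal{A}},\hat{\pazocal{A}},\chi)$, which is an abelian group (written multiplicatively). Property (P6) asserts precisely that $\omega(\cdot,\cdot,\chi)\colon \times^3\beta \to \Omega^1_{\underline{\hat{\pazocal{A}}}/k}$ is $\mathbb{Z}$-linear in each of its three arguments. By the universal property of the tensor product over $\mathbb{Z}$, it therefore factors uniquely through a homomorphism $\beta^{\otimes 3}\to \Omega^1_{\underline{\hat{\pazocal{A}}}/k}$, all tensor products being taken over $\mathbb{Z}$.

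Next recall that $\Lambda^3_{\mathbb{Z}}\beta$ is the quotient of $\beta^{\otimes 3}$ by the subgroup $R$ generated by the decomposable tensors having two equal factors, and that a trilinear map factors through $\Lambda^3_{\mathbb{Z}}\beta$ precisely when the induced map on $\beta^{\otimes 3}$ annihilates $R$, i.e.\ when it is \emph{alternating}. Here (P7) supplies only anti-symmetry: interchanging two equal arguments shows that the value of $\omega$ on a generator of $R$ equals its own negative, hence is killed by $2$. To upgrade this to genuine vanishing I would use that the target $\Omega^1_{\underline{\hat{\pazocal{A}}}/k}$ is a module over the $k$-algebra $\underline{\hat{\pazocal{A}}}$ with $k$ of characteristic $0$; thus the target is a $\mathbb{Q}$-vector space, in particular torsion-free as an abelian group. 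Consequently $2x=0$ forces $x=0$, so the induced map annihilates $R$ and factors through $\Lambda^3_{\mathbb{Z}}\beta$, yielding the asserted map.

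The only step needing care — and hence the main obstacle, mild as it is — is exactly this passage from anti-symmetry to the alternating condition, which would fail over a base of characteristic $2$. The hypothesis that $k$ has characteristic $0$, which guarantees that the target is torsion-free, is what makes the argument go through; everything else is a direct appeal to the relevant universal properties.
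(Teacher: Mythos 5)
Your argument is correct and is precisely the routine verification the paper leaves implicit (the corollary is stated without proof): multilinearity gives factorization through $\beta^{\otimes 3}$, and anti-symmetry upgrades to the alternating condition because $\Omega^{1}_{\underline{\hat{\pazocal{A}}}/k}$ is a $\mathbb{Q}$-vector space in characteristic $0$, hence $2$-torsion-free. Your explicit flagging of the anti-symmetric versus alternating distinction is the only point of substance, and you handle it correctly.
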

 
  \subsubsection{(P8)  Residue.}\label{residue} Suppose that $\tilde{\pazocal{A}}^{\circ}$ and $\hat{\pazocal{A}}^{\circ}$ are objects of $\mathcal{C}^{1} _{A}$ whose reductions $\underline{\tilde{\pazocal{A}}}^{\circ}$ and $\underline{\hat{\pazocal{A}}}^{\circ}$ are discrete valuation rings.    Let   $\tilde{\pazocal{A}}$ and $\hat{\pazocal{A}}$ be their localisations at the prime ideal $(t).$ Suppose that $\chi:\tilde{\pazocal{A}}^{\circ}|_{t^2} \xrightarrow{\sim}  \hat{\pazocal{A}}^{\circ} |_{t^2}.$ Note that this gives, after localisation, an isomorphism $\chi: \tilde{\pazocal{A}}|_{t^2} \xrightarrow{\sim}  \hat{\pazocal{A}}|_{t^2}.$ Suppose that    $\hat{\pi} \in \hat{\pazocal{A}}^\circ$ (resp. $\tilde{\pi} \in \tilde{\pazocal{A}}^\circ$),   such that its  reduction to   $\underline{\hat{\pazocal{A}}}^\circ$  (resp. $\underline{\tilde{\pazocal{A}}}^\circ$)  is a uniformizer.  Assume that $\chi(\hat{\pi}|_{t^2})=\tilde{\pi}|_{t^2}$ in   $\hat{\pazocal{A}}^{\circ} |_{t^2}.$ 

\begin{definition}\label{gooddefn}
 We let $\beta(\hat{\pazocal{A}}^{\circ},\hat{\pi}):=\{\hat{y} \in \hat{\pazocal{A}} ^{\times}  | \hat{y}=\hat{\pi} ^{a}u, \; {\rm for \; some \; } a \in \mathbb{Z},  \; {\rm  and\;} u \in  (\hat{\pazocal{A}} ^{\circ})^{\times}\}$ and $\times ^n \beta(\hat{\pazocal{A}}^{\circ},\hat{\pi}),$ the $n$-fold product of $\beta(\hat{\pazocal{A}}^{\circ},\hat{\pi})$ with itself. We say that $\hat{y} \in \hat{\pazocal{A}}  $ is  $\hat{\pi}$-{\it good} if $\hat{y} \in \beta(\hat{\pazocal{A}}^{\circ},\hat{\pi}).$
\end{definition}
Note that $\times ^n \beta(\hat{\pazocal{A}}^{\circ},\hat{\pi})  \subseteq  \times ^n \hat{\pazocal{A}} ^{\times}.$ We let $\times ^n\beta(\tilde{\pazocal{A}} ^{\circ},\hat{\pazocal{A}}^{\circ} ,\chi,\tilde{\pi},\hat{\pi} )  \subseteq \times ^n \beta(\tilde{\pazocal{A}},\hat{\pazocal{A}} ,\chi)$ denote the subgroup of those pairs of $n$-tuples which lie  in $ \times ^n \beta (\tilde{\pazocal{A}}^{\circ} ,\tilde{\pi}) $ and $\times ^n \beta(\hat{\pazocal{A}}^{\circ} ,\hat{\pi}).$  We claim that on this set, the 1-form $\omega$  has only logarithmic singularities at the closed point of $ {\rm Spec} \, \underline{\hat{\pazocal{A}}}^\circ $: 
$$
 \omega : \times ^3 \beta(\tilde{\pazocal{A}}^{\circ},\hat{\pazocal{A}}^{\circ},\chi,\tilde{\pi},\hat{\pi} )  \to \Omega^1 _{ \underline{\hat{\pazocal{A}}}_{\log}^\circ  /k }   \subseteq \Omega^1 _{ \underline{\hat{\pazocal{A}}} /k },
$$
where, if $\underline{\hat{\pi}}$ denotes the reduction of $\hat{\pi}$ to $\underline{\hat{\pazocal{A}}},$ then $\Omega^1 _{ \underline{\hat{\pazocal{A}}}_{\log}^\circ  /k }:=\big(\Omega^1 _{ \underline{\hat{\pazocal{A}}}^\circ  /k }  +   \underline{\hat{\pazocal{A}}}^\circ \cdot d \log (\underline{\hat{\pi}})\big) \subseteq  \Omega^1 _{ \underline{\hat{\pazocal{A}}} /k}.$  In the remainder of this section, we will prove the above statement and give a formula for the residue of $\omega.$ 

First note that  we have the following  version:
\begin{eqnarray}\label{goncres}
res_{\hat{\pi}}:  \Lambda ^{n} \beta(\hat{\pazocal{A}} ^{\circ},\hat{\pi} ) \to \Lambda^{n-1} (\hat{\pazocal{A}}^{\circ}/(\hat{\pi}))^{\times}, 
\end{eqnarray}
of the residue map in \cite[\textsection 1.14]{gon2}. 
This map is characterised by the following properties. It is  multilinear, anti-symmetric, vanishes when restricted to $\Lambda^n (\hat{\pazocal{A}} ^{\circ})^{\times};$  and 
$$
res_{\hat{\pi}}(\hat{\pi}, u_{2}, \cdots, u_{n})=(\underline{u}_{2},\cdots, \underline{u}_{n}),
$$
where if $u_i \in (\hat{\pazocal{A}} ^{\circ})^{\times} , $
 for $2 \leq i \leq n,$ then $\underline{u}_i$ denotes  reduction modulo $(\hat{\pi}).$  We will need the fact  that this construction of $res_{\hat{\pi}}$ in fact  depends only on the ideal $(\hat{\pi})$ generated by $\hat{\pi}$ in  $\hat{\pazocal{A}}^{\circ}.$

\begin{lemma}
If  $\hat{\pi}'=v \cdot \hat{\pi},$ for some  unit  $v \in (\hat{\pazocal{A}} ^{\circ})^{\times},$ then $res_{\hat{\pi}'}=res_{\hat{\pi}}.$ 
\end{lemma}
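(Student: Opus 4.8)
The plan is to reduce everything to the characterizing properties of the residue map listed just above the statement. The first observation is that, because $v \in (\hat{\pazocal{A}}^\circ)^\times$ is a unit, the two elements $\hat{\pi}$ and $\hat{\pi}'=v\hat{\pi}$ generate the same ideal of $\hat{\pazocal{A}}^\circ$. Consequently $\beta(\hat{\pazocal{A}}^\circ,\hat{\pi})=\beta(\hat{\pazocal{A}}^\circ,\hat{\pi}')$ --- an element $\hat{\pi}^a u$ can be rewritten as $(\hat{\pi}')^a(v^{-a}u)$ with $v^{-a}u$ still a unit --- and the residue ring $\hat{\pazocal{A}}^\circ/(\hat{\pi})=\hat{\pazocal{A}}^\circ/(\hat{\pi}')$ is unchanged. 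Thus $res_{\hat{\pi}}$ and $res_{\hat{\pi}'}$ are maps with the same source $\Lambda^n\beta(\hat{\pazocal{A}}^\circ,\hat{\pi})$ and the same target $\Lambda^{n-1}(\hat{\pazocal{A}}^\circ/(\hat{\pi}))^\times$, so comparing them makes sense.

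The key point is that the four listed properties --- multilinearity, anti-symmetry, vanishing on $\Lambda^n(\hat{\pazocal{A}}^\circ)^\times$, and the normalization $res(\hat{\pi},u_2,\dots,u_n)=(\underline{u}_2,\dots,\underline{u}_n)$ --- determine the map uniquely. This is the standard generation argument: every element of $\beta$ has the form $\hat{\pi}^a u$ with $u$ a unit, so by multilinearity each slot of a wedge splits into a power of $\hat{\pi}$ and a unit, and by anti-symmetry at most one factor of $\hat{\pi}$ can survive in any monomial; hence the value on an arbitrary wedge is forced by the value on $(\hat{\pi},u_2,\dots,u_n)$ together with vanishing on units. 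Therefore it suffices to check that $res_{\hat{\pi}}$ satisfies the defining properties of $res_{\hat{\pi}'}$.

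The first three properties hold for $res_{\hat{\pi}}$ verbatim, since they refer only to the common source and target and not to the choice of generator. For the normalization I would compute directly: using multilinearity in the first slot together with $\hat{\pi}'=v\hat{\pi}$,
\begin{equation*}
res_{\hat{\pi}}(\hat{\pi}',u_2,\dots,u_n)=res_{\hat{\pi}}(v,u_2,\dots,u_n)+res_{\hat{\pi}}(\hat{\pi},u_2,\dots,u_n).
\end{equation*}
Here $v,u_2,\dots,u_n$ all lie in $(\hat{\pazocal{A}}^\circ)^\times$, so the first term vanishes by the vanishing-on-units property, while the second equals $(\underline{u}_2,\dots,\underline{u}_n)$ by the normalization for $res_{\hat{\pi}}$; note that the reductions $\underline{u}_i$ are taken modulo $(\hat{\pi})=(\hat{\pi}')$, so they agree with those appearing in the normalization of $res_{\hat{\pi}'}$. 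Thus $res_{\hat{\pi}}$ satisfies all the properties characterizing $res_{\hat{\pi}'}$, and by uniqueness $res_{\hat{\pi}}=res_{\hat{\pi}'}$.

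I expect no serious obstacle; the only points requiring a little care are spelling out the uniqueness/generation argument and verifying that rewriting $\hat{\pi}^a u$ in terms of $\hat{\pi}'$ keeps the unit part in $(\hat{\pazocal{A}}^\circ)^\times$, so that the two sources genuinely coincide and the equality of the maps is meaningful.
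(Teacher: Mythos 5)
Your proposal is correct and follows essentially the same route as the paper: observe that $\beta(\hat{\pazocal{A}}^{\circ},\hat{\pi}')=\beta(\hat{\pazocal{A}}^{\circ},\hat{\pi})$ so the comparison makes sense, reduce by multilinearity, anti-symmetry and vanishing on units to elements of the form $(\hat{\pi}',u_2,\dots,u_n)$, and then compute $res_{\hat{\pi}}(v\cdot\hat{\pi},u_2,\dots,u_n)=res_{\hat{\pi}}(v,u_2,\dots,u_n)+res_{\hat{\pi}}(\hat{\pi},u_2,\dots,u_n)=(\underline{u}_2,\dots,\underline{u}_n)$. The only difference is that you spell out the generation/uniqueness argument slightly more explicitly, which the paper leaves implicit.
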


\begin{proof} 
First note that under the assumptions $\beta(\hat{\pazocal{A}} ^{\circ},\hat{\pi}')=\beta(\hat{\pazocal{A}} ^{\circ},\hat{\pi})$ and hence the sources and the targets of  $res_{\hat{\pi}'}$ and $res_{\hat{\pi}}$ are the same and  the statement makes sense. In order to prove the statement, by the multi-linearity, anti-symmetry and the vanishing of both  $res_{\hat{\pi}'}$ and $res_{\hat{\pi}}$  on $\Lambda^n(\hat{\pazocal{A}} ^{\circ})^{\times},$ we need to check that they agree on elements of the form $(\hat{\pi}',u_2,\cdots, u_{n})$ with $u_i \in (\hat{\pazocal{A}} ^{\circ})^{\times}.$ On this element, we have $res_{\hat{\pi}'}(\hat{\pi}',u_2,\cdots, u_{n})=(\underline{u}_{2},\cdots, \underline{u}_{n})$ and $res_{\hat{\pi}}(\hat{\pi}',u_2,\cdots, u_{n})=res_{\hat{\pi}}(v\cdot \hat{\pi},u_2,\cdots, u_{n})=res_{\hat{\pi}}(v,u_2,\cdots, u_{n})+ res_{\hat{\pi}}( \hat{\pi},u_2,\cdots, u_{n})  =(\underline{u}_{2},\cdots, \underline{u}_{n}),$ which finishes the proof.
\end{proof}

Let $\mathcal{C}^{0} _{A}$ denote the full subcategory of the category of $A$-algebras whose objects consist of \'{e}tale $A$-algebras which are integral domains with the residue field of the closed point a finite extension $k'$ of $k.$   Given such an $A$-algebra $B,$ the completion $\hat{B}$ along the ideal $(t)$ is canonically isomorphic to $k'[[t]],$  where $k'$ is the residue field of the closed point, so we write $\psi:\hat{B}\xrightarrow{\sim} k'[[t]].$ Let $[t^2\wedge t]: \Lambda ^{2}  _{k'} k'[[t]] \to k'$ denote the map that sends $a\wedge b$ to the coefficient of $t^2\wedge t$ in $a\wedge b.$

We have a canonical map 
\[
\begin{CD}
\ell:\Lambda ^{2}  _{\mathbb{Z}}B^{\times} \to \Lambda ^{2}  _{\mathbb{Z}}\hat{B}^{\times} @>{\Lambda^{2} \psi}>>\Lambda ^{2}  _{\mathbb{Z}} k'[[t]]^{\times} @>{\Lambda ^{2}  \log ^{\circ}  }>> \Lambda ^{2}  _{k'} k'[[t]] @>{[t^2\wedge t]}>>k',
\end{CD}
\]
for every object $B$ of $\mathcal{C}^{0} _{A}.$ 
 With the above notation,   $\hat{\pazocal{A}}^{\circ}/(\hat{\pi})$ is an object of $\mathcal{C}^{0} _{A},$ and we have a canonical map: 
\begin{eqnarray}\label{ellmap}
\ell: \Lambda^2 _{\mathbb{Z}}(\hat{\pazocal{A}}^{\circ}/(\hat{\pi}))^{\times}  \to k':=\hat{\pazocal{A}}^{\circ}/(\hat{\pi},t ) .
\end{eqnarray}

\begin{proposition}
If $(\tilde{p},\hat{p},\chi) \in  \Lambda ^3 \beta(\tilde{\pazocal{A}}^{\circ},\hat{\pazocal{A}}^{\circ},\chi,\tilde{\pi},\hat{\pi} ) ,$ then  we have $\omega(\tilde{p},\hat{p},\chi) \in \Omega^1 _{ \underline{\hat{\pazocal{A}}}_{\log}^\circ  /k } $ with  residue given by 

\begin{eqnarray}\label{resform}
res_{\underline{\hat{\pi}}} \omega(\tilde{p},\hat{p},\chi) = \underline{\chi} \circ\ell\circ res_{\tilde{\pi}} (\tilde{p}) -\ell\circ res_{\hat{\pi}} (\hat{p}).
\end{eqnarray}
\end{proposition}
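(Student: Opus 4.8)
The plan is to reduce to an explicit split local model and then read off both sides of (\ref{resform}) from the defining formula (\ref{defomega}) for $\Omega$. First I would normalise the situation: by the completion property (P2) I may replace $\tilde{\pazocal{A}}^{\circ}$ and $\hat{\pazocal{A}}^{\circ}$ by their completions along the closed fibre, and then Lemma \ref{groth} together with Remark \ref{rem split} provides $A$-algebra isomorphisms $\hat{\pazocal{A}}^{\circ}\simeq R[[t]]$ and $\tilde{\pazocal{A}}^{\circ}\simeq\tilde{R}[[t]]$, where $R=\underline{\hat{\pazocal{A}}}^{\circ}$ and $\tilde{R}=\underline{\tilde{\pazocal{A}}}^{\circ}$ are the reductions. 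The key point is that, because $res_{\hat\pi}$ depends only on the ideal $(\hat\pi)$ and because a splitting can be chosen sending a prescribed uniformizer lift to its reduction, I would arrange the isomorphism on the $\hat{\,}$-side so that $\hat\pi$ corresponds to the $t$-independent element $\pi_{0}\in R$; this removes all interaction between the power-of-$\hat\pi$ factor and the exponential expansion. Since Corollary \ref{indepphi} and the subsequent lemma show $\omega$ is independent of the lifting $\overline{\chi}$ and of the splitting, I may then evaluate $\omega(\tilde p,\hat p,\chi)=\Omega(\overline{\varphi}^{-1}(\overline{\chi}(\tilde p)),\overline{\varphi}^{-1}(\hat p))$ in these coordinates.

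Next I would establish the logarithmic singularity and extract the residue. Writing each good component as $\hat y_{i}=\pi_{0}^{a_{i}}u_{i}$ with $u_{i}\in R[[t]]^{\times}$ and $u_{i}=\alpha_{0i}e^{t\alpha_{1i}+t^{2}\hat\alpha_{2i}+\cdots}$, where $\alpha_{0i}\in R^{\times}$ and $\alpha_{1i},\hat\alpha_{2i}\in R$ (legitimate since $\mathrm{char}\,k=0$), the leading coefficient of $\hat y_{i}$ is $\pi_{0}^{a_{i}}\alpha_{0i}$, so its logarithmic differential is $a_{i}\,d\log\pi_{0}+d\log\alpha_{0i}$ with $d\log\alpha_{0i}\in\Omega^{1}_{R/k}$. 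Substituting into (\ref{defomega}) exhibits $\omega$ as a sum of terms in $\Omega^{1}_{R/k}$ and $R$-multiples of $d\log\pi_{0}$, which is exactly the assertion $\omega(\tilde p,\hat p,\chi)\in\Omega^{1}_{\underline{\hat{\pazocal{A}}}^{\circ}_{\log}/k}$. The residue $res_{\underline{\hat\pi}}$ then returns the coefficient of $d\log\pi_{0}$ reduced modulo $\pi_{0}$, namely the class of $\sum_{\sigma\in S_{3}}(-1)^{\sigma}a_{\sigma(2)}\,\alpha_{1\sigma(1)}(\tilde\alpha_{2\sigma(3)}-\hat\alpha_{2\sigma(3)})$.

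To match this with the right-hand side of (\ref{resform}), I observe that since $\tilde p$ and $\hat p$ share $\alpha_{0i}$ and $\alpha_{1i}$, the residue splits into a piece carrying $\hat\alpha_{2}$ and a piece carrying $\tilde\alpha_{2}$, so it suffices to identify the contribution of the second argument. Expanding the six terms of $S_{3}$ and comparing with $-\ell\circ res_{\hat\pi}(\hat p)$, I would use that $res_{\hat\pi}$ is the tame symbol, so $res_{\hat\pi}(\hat y_{1}\wedge\hat y_{2}\wedge\hat y_{3})=a_{1}\underline{u}_{2}\wedge\underline{u}_{3}-a_{2}\underline{u}_{1}\wedge\underline{u}_{3}+a_{3}\underline{u}_{1}\wedge\underline{u}_{2}$, and that by the definition of $\ell$ via $[t^{2}\wedge t]\circ\Lambda^{2}\log^{\circ}$ one has $\ell(\underline{u}_{i}\wedge\underline{u}_{j})\equiv\hat\alpha_{2i}\alpha_{1j}-\alpha_{1i}\hat\alpha_{2j}\pmod{\hat\pi}$; the two antisymmetrised sums then agree term by term. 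For the $\tilde p$ contribution I would invoke the interchange property (P4), $\omega(\tilde p,\hat p,\chi)=-\underline{\chi}_{*}\omega(\hat p,\tilde p,\chi^{-1})$, together with the naturality $res_{\underline{\hat\pi}}\circ\underline{\chi}_{*}=\underline{\chi}\circ res_{\underline{\tilde\pi}}$: this turns the $\tilde p$ contribution, now a second-argument contribution for $\chi^{-1}$, into $-\underline{\chi}(-\ell\circ res_{\tilde\pi}(\tilde p))=\underline{\chi}\circ\ell\circ res_{\tilde\pi}(\tilde p)$, and summing the two pieces yields (\ref{resform}).

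The hard part will be the first step. A lift $\overline{\chi}$ of $\chi$ need not respect the good structure exactly, and a naive choice shifts the transported uniformizer $\overline{\chi}(\tilde\pi)$ away from $\hat\pi$ by an $O(t^{2})$ term that a priori pollutes the coefficients $\tilde\alpha_{2i}$. Routing the whole $\tilde p$ contribution through (P4) as above sidesteps this, since one then never transports the uniformizer; alternatively, one checks that any such $O(t^{2})$ shift changes the residue by a multiple of $\sum_{\sigma\in S_{3}}(-1)^{\sigma}\alpha_{1\sigma(1)}a_{\sigma(2)}a_{\sigma(3)}$, which vanishes because $a_{\sigma(2)}a_{\sigma(3)}$ is symmetric in the two slots. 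Once the normalisation placing $\hat\pi$ in $t$-independent position is in force, the remaining combinatorics of the $S_{3}$-sum are routine.
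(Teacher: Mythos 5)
Your computational core --- normalising so that $\hat{\pi}$ sits in $t$-independent position, expanding $\hat{y}_i=\hat{\pi}^{a_i}u_i$ with $u_i=\alpha_{0i}e^{t\alpha_{1i}+t^2\hat{\alpha}_{2i}+\cdots}$, reading off the coefficient of $d\log\underline{\hat{\pi}}$, and matching it with $\ell\circ res_{\hat{\pi}}$ via the tame-symbol expansion --- is correct and is essentially the computation in the paper, which shortens it by using multilinearity and antisymmetry to reduce to triples of the form $(\hat{\pi},u_2,u_3)$ with the $u_i$ units. Note, however, that the paper completes at the closed point $(\hat{\pi},t)$ rather than merely along $(t)$; you should do the same, since it is only over $k'[[s,t]]$ that you can literally place $\hat{\pi}$ at a $t$-independent coordinate, and the residue only depends on that completion anyway.

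The one genuine problem is your treatment of the $\tilde{p}$-contribution via (P4). The decomposition of $res_{\underline{\hat{\pi}}}\omega$ into a ``$\tilde{p}$-piece'' and a ``$\hat{p}$-piece'' only makes sense once coordinates and a lifting $\overline{\chi}$ are fixed, and (P4) merely exchanges which of the two pieces is the transparent one: on the other side the second argument $\tilde{p}$ is clean, but the first argument $\overline{\chi}^{-1}(\hat{p})$ is now the one polluted by the $O(t^2)$ discrepancy. The identity you obtain reads $T-\ell(res_{\hat{\pi}}(\hat{p}))=\underline{\chi}(\ell(res_{\tilde{\pi}}(\tilde{p})))-\underline{\chi}(T')$, and concluding $T=\underline{\chi}(\ell(res_{\tilde{\pi}}(\tilde{p})))$ requires knowing $\underline{\chi}(T')=\ell(res_{\hat{\pi}}(\hat{p}))$, which is exactly the unresolved first-argument computation again. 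Your fallback is the correct repair, and it is available for free: after completing at the closed point there is a unique $A$-algebra isomorphism $\overline{\chi}$ with $\overline{\chi}(\tilde{\pi})=\hat{\pi}$ (it is determined by the image of $\tilde{\pi}$ and lifts $\chi$ because $\chi(\tilde{\pi}|_{t^2})=\hat{\pi}|_{t^2}$), and the earlier lemma shows $\omega$ is independent of the choice of lifting, so one simply uses this $\overline{\chi}$; then $\overline{\chi}(\tilde{p})$ is manifestly $\hat{\pi}$-good and its contribution is computed exactly as for $\hat{p}$. This is what the paper does. Your explicit vanishing claim for the shift is also slightly off: if $\overline{\chi}(\tilde{\pi})=\hat{\pi}(1+(c/\hat{\pi})t^2)+O(t^3),$ the transported second-order coefficients change by $c\,\alpha_{0i}'/\alpha_{0i}$ (with $\alpha_{0i}$ the full leading term, including the unit part), not by $c\,a_i$; the correction to $\Omega$ is then $c\sum_{\sigma}(-1)^{\sigma}\alpha_{1\sigma(1)}\frac{\alpha_{0\sigma(3)}'}{\alpha_{0\sigma(3)}}\frac{\alpha_{0\sigma(2)}'}{\alpha_{0\sigma(2)}}\,ds,$ which does vanish, but by the antisymmetry in $\sigma(2)\leftrightarrow\sigma(3)$ applied to the full logarithmic derivatives --- the same cancellation already used to prove independence of $\overline{\chi}$.
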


\begin{proof}

First, without loss of generality, we  replace $\hat{\pazocal{A}}^{\circ}$ and $\tilde{\pazocal{A}}^{\circ}$  with their completions at their closed point corresponding to $(\hat{\pi},t)$ and $(\tilde{\pi},t).$  Then we choose $\overline{\chi}:\tilde{\pazocal{A}}^{\circ} \to \hat{\pazocal{A}}^{\circ}$ as the unique  lifting of $\chi$ which sends $\tilde{\pi}$ to $\hat{\pi}$ and is an $A$-algebra isomorphism.  Choosing also an isomorphism $k'[[s]] \simeq \underline{\hat{\pazocal{A}}}^{\circ},$ which sends $s$ to $\underline{\hat{\pi}},$ we reduce the problem to the case when 
$\hat{\pazocal{A}}^{\circ}=\tilde{\pazocal{A}}^{\circ}=k'[[s,t]],$ $\chi=id$ and $\hat{\pi}=\tilde{\pi}=s.$

Let  $\tilde{p}=(\tilde{y}_1,\tilde{y}_{2},\tilde{y}_{3})$ and  $\hat{p}=(\hat{y}_1,\hat{y}_{2},\hat{y}_{3}).$ Then $\omega(\tilde{p},\hat{p},\chi)=\Omega(\tilde{p},\hat{p}).$ If  $\tilde{y}_i,\hat{y}_{i}\in k'[[s,t]]^{\times},$ for all $1 \leq i \leq 3$ then $\Omega(\tilde{p},\hat{p}) \in \Omega^{1} _{k'[[s]]/k}$ and hence $res_{s} \Omega(\tilde{p},\hat{p}) =0.$ In this case, we also have $res_{s}\tilde{p}=res_{s}\hat{p}=0 \in \Lambda^{2} k[[s]]^{\times}. $ Therefore, we have the equality in the (\ref{resform}) in this case.  

By the above property, the multi-linearity and  anti-symmetry of  both sides of (\ref{resform}), and the definition of $s$-goodness,  we are reduced to proving the equality in case when $\tilde{p}=(s ,\tilde{y}_{2},\tilde{y}_{3})$ and  $\hat{p}=(s ,\hat{y}_{2},\hat{y}_{3}),$ with $\tilde{y}_{i},\hat{y}_{i} \in k'[[s,t]] ^{\times}.$ We can then write  $\hat{y}_i=\alpha_{0i}e^{t\alpha_{1i}+t^2\hat{\alpha}_{2i}+\cdots }$ and $\tilde{y}_i=\alpha_{0i}e^{t\alpha_{1i}+t^2\tilde{\alpha}_{2i}+\cdots },$ with $\alpha_{1i}, \hat{\alpha}_{ri},\tilde{\alpha}_{ri} \in k'[[s]],$ for $2 \leq r,$ and $2\leq i \leq 3; $ and $\alpha_{0i} \in k[[s]] ^{\times},$ for $2\leq i \leq 3.$    Then we have $
res_{s}\Omega(\tilde{p},\hat{p}) = \alpha_{1 3}(0) ( \tilde{\alpha}_{22} (0)-\hat{\alpha}_{22}(0) )  -\alpha_{1 2}(0) ( \tilde{\alpha}_{23}(0)-\hat{\alpha}_{23}(0) ) 
$

$$
=(\alpha_{1 3} (0) \tilde{\alpha}_{22} (0)-\alpha_{1 2} (0)  \tilde{\alpha}_{23} (0))-( \alpha_{1 3} (0)\hat{\alpha}_{22}(0) -\alpha_{1 2}(0)  \hat{\alpha}_{23}(0) ).
$$
On the other hand, $res_{s}(\tilde{p})=\big(\alpha_{02}(0) e^{t\alpha_{12}(0)+t^2\tilde{\alpha}_{22}(0)+\cdots }\big) \wedge \big(\alpha_{03}(0) e^{t\alpha_{13}(0)+t^2\tilde{\alpha}_{23}(0)+\cdots }\big) \in \Lambda ^{2} _{\mathbb{Z}} k'[[t]] ^{\times}, $ and hence $(\Lambda^{2} \log ^{\circ})(res_{s}(\tilde{p}))=(t\alpha_{12}(0)+t^2\tilde{\alpha}_{22}(0)+\cdots ) \wedge (t\alpha_{13}(0)+t^2\tilde{\alpha}_{23}(0)+\cdots) .$ This implies that $\ell(res_{s}(\tilde{p}))= \tilde{\alpha}_{22} (0) \alpha_{1 3} (0)-\alpha_{1 2} (0)  \tilde{\alpha}_{23} (0).$ Similarly, $\ell(res_{s}(\hat{p}))= \hat{\alpha}_{22} (0) \alpha_{1 3} (0)-\alpha_{1 2} (0)  \hat{\alpha}_{23} (0).$ Therefore, we have 
$$
res_{s}\Omega(\tilde{p},\hat{p})=\ell(res_{s}(\tilde{p}))-\ell(res_{s}(\hat{p})),
$$
 which finishes the proof of the proposition. 
\end{proof}
 
 \subsubsection{(P9) Difference relation.} Suppose that we have objects $\tilde{\pazocal{A}}_{i}$ and $\hat{\pazocal{A}}_{i}$  in $\mathcal{C}^{1} _{A}, $ for $i=1,2,$ as above, together with isomorphisms $\alpha_i:\tilde{\pazocal{A}}_{i}/(t^2) \xrightarrow{\sim}\hat{\pazocal{A}}_{i}/(t^2), $ for $i=1,2,$ and $\chi:\tilde{\pazocal{A}}_{2}/(t^2) \xrightarrow{\sim}\tilde{\pazocal{A}}_{1}/(t^2), $  and $\psi:\hat{\pazocal{A}}_{2}/(t^2) \xrightarrow{\sim}\hat{\pazocal{A}}_{1}/(t^2), $ with $\alpha_1\circ\chi=\psi\circ\alpha_2.$ Moreover, suppose that $(\tilde{p}_2,\tilde{p}_{1}) \in \times ^3\beta(\tilde{\pazocal{A}}_{2},\tilde{\pazocal{A}}_{1},\chi ),$
   $(\hat{p}_2,\hat{p}_{1}) \in \times ^3 \beta(\hat{\pazocal{A}}_{2},\hat{\pazocal{A}}_{1},\psi),$ and  $(\tilde{p}_i,\hat{p}_{i}) \in \times ^3 \beta(\tilde{\pazocal{A}}_{i},\hat{\pazocal{A}}_{i},\alpha_i ),$ for $i=1,2.$ Then we have

 $$
\underline{\alpha}_{1*}(\omega(\tilde{p}_{2},\tilde{p}_{1}, \chi))- \omega(\hat{p}_{2},\hat{p}_{1}, \psi)=\underline{\psi}_{*}(\omega(\tilde{p}_{2},\hat{p}_{2}, \alpha_2))-\omega(\tilde{p}_{1},\hat{p}_{1}, \alpha_1).
$$

\begin{proof} 
In order prove the identity above, first, without loss of generality,  we  replace all the rings above with their completions at a closed point in their special fibers such that these points correspond to each other under the given isomorphisms $\alpha_1, \,\alpha_2, \, \chi$ and $\psi.$ Choose liftings, $\overline{\alpha}_{i}$ and $\overline{\chi}.$ Since $\alpha_1 \circ \chi=\psi \circ \alpha_2,$ $\overline{\alpha}_{1} \circ \overline{\chi} \circ \overline{\alpha}_2^{-1}$ is a lifting of $\psi,$  and we can choose $\overline{\psi}:=\overline{\alpha}_{1} \circ \overline{\chi} \circ \overline{\alpha}_2^{-1}$ as the lifting of $\psi.$ With this choice, we have $\overline{\alpha}_1 \circ \overline{\chi}=\overline{\psi} \circ \overline{\alpha}_2.$

Let $\varphi: \underline{\hat{\pazocal{A}}}_{1} \to \hat{\pazocal{A}}_{1}$ be a splitting. Then $\overline{\psi} ^{-1}\circ\varphi \circ\underline{\psi}:\underline{\hat{\pazocal{A}}}_{2} \to \hat{\pazocal{A}}_{2}$ and $\overline{\alpha}_1 ^{-1}\circ\varphi \circ\underline{\alpha}_1:\underline{\tilde{\pazocal{A}}}_{1} \to \tilde{\pazocal{A}}_{1},$ with the corresponding  isomorphisms given by 
$$
\overline{\psi} ^{-1}\circ \overline{\varphi} \circ \overline{\underline{\psi}}:\underline{\hat{\pazocal{A}}}_{2}[[t]] \to \hat{\pazocal{A}}_{2} \;\;\;{\rm and} \;\;\; \overline{\alpha}_1 ^{-1}\circ\overline{\varphi} \circ\overline{\underline{\alpha}}_1:\underline{\tilde{\pazocal{A}}}_{1}[[t]] \to \tilde{\pazocal{A}}_{1}.
$$
By the definition of $\omega,$ we have 
$$
\omega(\tilde{p}_{1},\hat{p}_{1}, \alpha_1)=\Omega(\overline{\varphi}^{-1}(\overline{\alpha}_1(\tilde{p}_1)),\overline{\varphi}^{-1}(\hat{p}_{1})),
$$
$$
\omega(\hat{p}_{2},\hat{p}_{1}, \psi)=\Omega(\overline{\varphi}^{-1}(\overline{\psi}(\hat{p}_2)),\overline{\varphi}^{-1}(\hat{p}_{1})),
$$
$$
\omega(\tilde{p}_{2},\hat{p}_{2}, \alpha_2)=\Omega( (\overline{\psi} ^{-1}\circ \overline{\varphi} \circ \overline{\underline{\psi}})^{-1}(\overline{\alpha}_2(\tilde{p}_{2})) , (\overline{\psi} ^{-1}\circ \overline{\varphi} \circ \overline{\underline{\psi}})^{-1}(\hat{p}_{2}) ),
$$
hence 
$$
\underline{\psi}_{*}(\omega(\tilde{p}_{2},\hat{p}_{2}, \alpha_2))=\Omega( (\overline{\psi} ^{-1}\circ \overline{\varphi})^{-1}(\overline{\alpha}_2(\tilde{p}_{2})) , (\overline{\psi} ^{-1}\circ \overline{\varphi} )^{-1}(\hat{p}_{2}) )=\Omega(\overline{\varphi}^{-1}(\overline{\psi}(\overline{\alpha}_{2}(\tilde{p}_2))),\overline{\varphi}^{-1}(\overline{\psi}(\hat{p}_2)));
$$
and 
$$
\omega(\tilde{p}_{2},\tilde{p}_{1}, \chi)=\Omega( (\overline{\alpha}_1 ^{-1}\circ\overline{\varphi} \circ\overline{\underline{\alpha}}_1) ^{-1}(\overline{\chi}(\tilde{p}_2)) ,(\overline{\alpha}_1 ^{-1}\circ\overline{\varphi} \circ\overline{\underline{\alpha}}_1) ^{-1}(\tilde{p}_1)),
$$
hence 
$$
\underline{\alpha}_{1*}(\omega(\tilde{p}_{2},\tilde{p}_{1}, \chi))=\Omega( (\overline{\alpha}_1 ^{-1}\circ\overline{\varphi} ) ^{-1}(\overline{\chi}(\tilde{p}_2)) ,(\overline{\alpha}_1 ^{-1}\circ\overline{\varphi} ) ^{-1}(\tilde{p}_1))=\Omega(\overline{\varphi}^{-1}(\overline{\psi}(\overline{\alpha}_{2}(\tilde{p}_2))),\overline{\varphi}^{-1}(\overline{\alpha}_1(\tilde{p}_1))).
$$
The above expressions, together with the basic identity, 
$$
\Omega(\tilde{q}_2,\tilde{q}_1)-\Omega(\hat{q}_2,\hat{q}_1)=\Omega(\tilde{q}_2,\hat{q}_2)-\Omega(\tilde{q}_1,\hat{q}_{1}),
$$
finishes the proof of the property above. 
\end{proof}

 \subsubsection{(P10) Modulus.} The map $\omega(\cdot,\cdot,\chi)$ depends on $\tilde{p}$ and $\hat{p}$ only  modulo $(t^3).$ 
 
\begin{proof}
The statement follows from the same statement for $\Omega,$ which is seen directly from its definition. 
\end{proof}

 \subsubsection{(P11)  Relative Bloch group relation.} Suppose that $B$ is a local ring with infinite residue field and $I  \subsetneq B$ an ideal, we will define a relative Bloch group of $(B,I).$  Let us write $\beta(B,I):=\{(\tilde{b},\hat{b})| \hat{b}-\tilde{b} \in I \;{\rm and } \; \hat{b},\, \tilde{b} \in B^{\times} \},$ and $B^{\flat }:=\{b \in B| b\cdot (1-b) \in B^{\times} \}.$ Let $\beta^{\flat}(B,I)$ denote the intersection $(B^{\flat} \times B^{\flat} ) \cap \beta(B,I)$ inside $B ^{\times}\times B^{\times}.$ 
  
  Let us define the relative Bloch group $B_{2}(B,I)$ as the abelian group  generated by the symbols $\{ (\tilde{b},\hat{b})\}_{2}$ for every $(\tilde{b},\hat{b}) \in \beta^{\flat}(B,I),$ modulo the relations generated by the analog of the five term relation for the dilogarithm: 
$$
\{ (\tilde{x},\hat{x})\}_{2}-\{ (\tilde{y},\hat{y})\}_{2}+\{ (\tilde{y}/\tilde{x},\hat{y}/\hat{x})\}_{2}-\{ (\frac{1-\tilde{x}^{-1}}{1-\tilde{y}^{-1}},\frac{1-\hat{x}^{-1}}{1-\hat{y}^{-1}})\}_{2}+\{ (\frac{1-\tilde{x}}{1-\tilde{y}},\frac{1-\hat{x}}{1-\hat{y}})\}_{2}
$$
for every $(\tilde{x},\hat{x}),(\tilde{y},\hat{y}) \in \beta^{\flat}(B,I)$ such that $(\tilde{x}-\tilde{y},\hat{x}-\hat{y})\in \beta(B,I).$ As in the classical case, we obtain a complex 
$$
\delta: B_{2} (B,I) \to \Lambda^{2} \beta(B,I),
$$
which sends $(\tilde{b},\hat{b})$ to $(1-\tilde{b},1-\hat{b})\wedge (\tilde{b},\hat{b}).$  This map induces the following map: 
$$
\Delta: B_{2} (B,I) \otimes \beta(B,I) \to \Lambda^{3} \beta(B,I),
$$
which sends $\{ (\tilde{x},\hat{x})\}_{2} \otimes (\tilde{y},\hat{y})$ to $\delta(\{ (\tilde{x},\hat{x})\}_{2})\wedge (\tilde{y},\hat{y}).$

Let $B:=k'((s))[[t]],$ and $I=(t^2),$ then the two above definitions of $\beta$ are related as follows:  $\beta(B,(t^2))=\beta(B,B,id),$ where $id:B/(t^2)\to B/(t^2)$ is the identity map. In particular, we have a map $\omega:\Lambda^{3} \beta(B,(t^2))\to \Omega^{1} _{\underline{B}/k}.$ 

\begin{proposition}
With $B:=k'((s))[[t]],$ $\omega$ vanishes on  $\Delta(B_{2} (B,(t^2)) \otimes \beta(B,t^2))  \subseteq \Lambda^{3} \beta(B,(t^2)).$ 
\end{proposition}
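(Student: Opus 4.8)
The plan is to reduce the statement to the vanishing of $\omega$ on a single Steinberg-type wedge and then verify this by the explicit formula (\ref{defomega}). First I note that on $B=k'((s))[[t]]$ with $\chi=id$ one may take the trivial lifting $\overline{\chi}=id$ and the tautological splitting $\underline{B}=k'((s))\hookrightarrow B$, so that by the lemmas of \textsection 2.3 (independence of $\omega$ from $\overline{\chi}$ and $\varphi$) the map $\omega(\cdot,\cdot,id)$ is literally the map $\Omega$ of (\ref{defomega}). Since $B_{2}(B,(t^2))$ is generated by the symbols $\{(\tilde{x},\hat{x})\}_{2}$ and $\delta$ is defined on this group, the subgroup $\Delta(B_{2}(B,(t^2))\otimes\beta(B,(t^2)))$ is spanned by the elements
$$
\delta(\{(\tilde{x},\hat{x})\}_{2})\wedge(\tilde{y},\hat{y}) = (1-\tilde{x},1-\hat{x})\wedge(\tilde{x},\hat{x})\wedge(\tilde{y},\hat{y}),
$$
with $(\tilde{x},\hat{x})\in\beta^{\flat}(B,(t^2))$ and $(\tilde{y},\hat{y})\in\beta(B,(t^2))$. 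By the Corollary following (P7), $\omega$ factors through $\Lambda^{3}\beta$, so it suffices to prove that $\omega$ annihilates each such generator. In particular the relative five-term relation will play no role here: the vanishing is forced already by the Steinberg factor $(1-\tilde{x},1-\hat{x})\wedge(\tilde{x},\hat{x})$.

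Next I would expand the three entries. Writing the first, second and third components as $\alpha_{0i}e^{t\alpha_{1i}+t^2\tilde{\alpha}_{2i}+\cdots}$ (respectively with $\hat{\alpha}_{2i}$) as in (P1), I set $\eps_{i}:=\tilde{\alpha}_{2i}-\hat{\alpha}_{2i}$ and $c:=x_{0}/(1-x_{0})$, where $x_{0}\in k'((s))^{\times}$ is the reduction of $\tilde{x}$ modulo $(t)$; note that $x_{0}$ and $1-x_{0}$ are units precisely because $(\tilde{x},\hat{x})\in\beta^{\flat}$. The heart of the computation is to relate the data of $1-\tilde{x}$ (entry $1$) to that of $\tilde{x}$ (entry $2$). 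Expanding $1-\tilde{x}=(1-x_{0})\bigl(1-c\,\log^{\circ}(\tilde{x})-\cdots\bigr)$ and applying $\log^{\circ}$ to order $t^2$ gives $\alpha_{11}=-c\,\alpha_{12}$ together with
$$
\tilde{\alpha}_{21}=-c\,\tilde{\alpha}_{22}-\tfrac12 c(1+c)\alpha_{12}^{2}, \qquad \hat{\alpha}_{21}=-c\,\hat{\alpha}_{22}-\tfrac12 c(1+c)\alpha_{12}^{2}.
$$
The quadratic correction depends only on the common $t^{1}$-datum $\alpha_{12}$, so it cancels in the difference and yields $\eps_{1}=-c\,\eps_{2}$.

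Finally I would substitute $\alpha_{11}=-c\,\alpha_{12}$ and $\eps_{1}=-c\,\eps_{2}$ into (\ref{defomega}), using $d\log\alpha_{01}=d\log(1-x_{0})$, $d\log\alpha_{02}=d\log x_{0}$ and $d\log\alpha_{03}=d\log y_{0}$. The two terms carrying $d\log y_{0}$ cancel outright, and the remaining four collect as
$$
\Omega=\bigl(c\,d\log x_{0}+d\log(1-x_{0})\bigr)\cdot\bigl(\alpha_{13}\eps_{2}-\alpha_{12}\eps_{3}\bigr).
$$
Since $c\,d\log x_{0}+d\log(1-x_{0})=\tfrac{dx_{0}}{1-x_{0}}-\tfrac{dx_{0}}{1-x_{0}}=0$ in $\Omega^{1}_{k'((s))/k}$ — the infinitesimal shadow of the classical Steinberg relation — the whole expression vanishes, which proves the Proposition.

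The main obstacle is the middle step: extracting the precise second-order relation $\eps_{1}=-c\,\eps_{2}$, which requires carrying the exponential expansion of $1-\tilde{x}$ and $1-\hat{x}$ accurately to order $t^2$ and checking that the nonlinear correction terms coincide for the two liftings (hence disappear in $\eps_{1}$). Once that relation and the identity $\alpha_{11}=-c\,\alpha_{12}$ are in hand, the Steinberg factorization of $\Omega$ and its vanishing are immediate, and the argument is uniform in the auxiliary entry $(\tilde{y},\hat{y})$.
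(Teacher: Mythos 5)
Your proof is correct and follows essentially the same route as the paper: a direct evaluation of $\Omega$ on the Steinberg generators $(1-\tilde{x},1-\hat{x})\wedge(\tilde{x},\hat{x})\wedge(\tilde{y},\hat{y})$ via the exponential expansion of $1-\tilde{x}$ (your identities $\alpha_{11}=-c\,\alpha_{12}$ and $\varepsilon_{1}=-c\,\varepsilon_{2}$ are exactly the paper's expansion of $1-\tilde{f}$) followed by the cancellation $c\,d\log x_{0}+d\log(1-x_{0})=0$. The only difference is organizational: the paper first invokes transitivity (P5) to split each generator into the two pieces (\ref{blochgrp1}) and (\ref{blochgrp2}) --- varying only the third entry, then only $f$ --- and checks each separately, whereas you carry out the combined computation in one pass; both amount to the same calculation.
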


\begin{proof} 
We need to show that $\omega$ vanishes on terms such as 
 $$((1-\tilde{f},\tilde{f},\tilde{g}),(1-\hat{f},\hat{f},\hat{g})),$$ with $(\tilde{f},\hat{f}) \in \beta^{\flat}(B,(t^2))$  and $(\tilde{g},\hat{g}) \in \beta(B,(t^2)).$

By the transitivity property (P5), it is enough to show that $\omega$ vanishes on the following terms:

\begin{eqnarray}\label{blochgrp1}
((1-\tilde{f},\tilde{f},\tilde{g}),(1-\tilde{f},\tilde{f},\hat{g}))
\end{eqnarray}

and

\begin{eqnarray}\label{blochgrp2}
((1-\tilde{f},\tilde{f},\hat{g}),(1-\hat{f},\hat{f},\hat{g})).
\end{eqnarray}

In order to  prove that $\omega$ vanishes on (\ref{blochgrp1}), let $\tilde{f}=\alpha_{0} e^{\alpha_1 t+\tilde{\alpha}_{2} t^2+\cdots},$  $\tilde{g}:=\beta_{0} e^{\beta_1 t+\tilde{\beta}_{2} t^2+\cdots}$ and $\hat{g}:=\beta_{0} e^{\beta_1 t+\hat{\beta}_{2} t^2+\cdots}.$ Then we have 
$$
1-\tilde{f}=(1-\alpha_0)e^{\frac{\alpha_0\alpha_1}{\alpha_0-1}t + (\frac{\alpha_0 \tilde{\alpha}_2}{\alpha_0-1}-\frac{\alpha_0\alpha_1 ^2}{2(\alpha_0-1)^2}   )t^2+\cdots }.
$$
Then property (P1) gives that $\omega$ evaluated on (\ref{blochgrp1}) is equal to 
$$
\Big( \frac{\alpha_0\alpha_1}{\alpha_0-1}\cdot d \log(\alpha_0)-\alpha_1 \cdot d \log (1-\alpha_0) \Big)\cdot (\tilde{\beta}_2-\hat{\beta}_{2})=0.
$$

In order  prove that $\omega$ vanishes on (\ref{blochgrp2}), let $\hat{f}=\alpha_{0} e^{\alpha_1 t+\hat{\alpha}_{2} t^2+\cdots}.$ Again by (P1), we compute that $\omega$ evaluated on (\ref{blochgrp2}) is equal to 

\begin{eqnarray*}
& &\Big( \frac{\alpha_0}{\alpha_0-1}(\tilde{\alpha}_2-\hat{\alpha}_2)\alpha_1- \frac{\alpha_0\alpha_1}{\alpha_0-1} (\tilde{\alpha}_2-\hat{\alpha}_2)       \Big) d\log(\beta_0)\\
& &+\Big(d \log (1-\alpha_0)(\tilde{\alpha}_2-\hat{\alpha}_2)- \frac{\alpha_0}{\alpha_0-1}(\tilde{\alpha}_2-\hat{\alpha}_2)d \log (\alpha_0)    \Big)\beta_1=0.
\end{eqnarray*}
 This finishes the proof.
 \end{proof}

\begin{corollary}\label{relblochom}
Suppose that $\pazocal{A}$ is an object of $\mathcal{C}^{1} _{A},$ and $(\tilde{p},\hat{p})=((1-\tilde{f},\tilde{f},\tilde{g}),(1-\hat{f},\hat{f},\hat{g}))$ with $(\tilde{p},\hat{p}) \in \times ^{3}\beta(\pazocal{A},\pazocal{A},id).$ Then 
$\omega(\tilde{p},\hat{p},id)=0.$ 
\end{corollary}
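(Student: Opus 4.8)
The plan is to reduce the statement for a general object $\pazocal{A}$ of $\mathcal{C}^{1}_{A}$ to the already established case $B=k'((s))[[t]]$ treated in the preceding Proposition, using the completion property (P2), the functoriality (P3), and the injectivity of a suitable map on K\"ahler differentials. This is exactly the reduction pattern employed in the proofs of Corollary \ref{indepphi} and the lemmas of \textsection 2.3.

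First I would invoke (P2) to replace $\pazocal{A}$ by its completion along the closed fiber; this changes neither $\underline{\pazocal{A}}$ nor $\Omega^{1}_{\underline{\pazocal{A}}/k}$, and it carries triples of the Bloch form $((1-\tilde{f},\tilde{f},\tilde{g}),(1-\hat{f},\hat{f},\hat{g}))$ to triples of the same form. By smoothness and completeness we then have $\pazocal{A}\simeq \underline{\pazocal{A}}[[t]]$, so that $\omega(\tilde{p},\hat{p},id)$ is an element of $\Omega^{1}_{\underline{\pazocal{A}}/k}$ which we must show vanishes.

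Next I would choose a closed point of $\mathrm{Spec}\,\underline{\pazocal{A}}$; the fraction field of the completion of $\underline{\pazocal{A}}$ there is of the form $k'((s))$ for a finite extension $k'/k$, and the induced map $\Omega^{1}_{\underline{\pazocal{A}}/k}\to \Omega^{1}_{k'((s))/k}$ is injective, as already used in Corollary \ref{indepphi}. Let $g:\underline{\pazocal{A}}[[t]]\to k'((s))[[t]]$ be the corresponding localisation-completion homomorphism. Applying (P3) to the square with $\tilde{\pazocal{A}}_1=\hat{\pazocal{A}}_1=\underline{\pazocal{A}}[[t]]$, $\tilde{\pazocal{A}}_2=\hat{\pazocal{A}}_2=k'((s))[[t]]$, both vertical maps equal to $g$, and $\chi_1=\chi_2=id$, yields $\underline{g}_{*}(\omega(\tilde{p},\hat{p},id))=\omega(g(\tilde{p}),g(\hat{p}),id)$. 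Since $g$ is a ring homomorphism sending units to units, respecting $1-\tilde{f}\mapsto 1-g(\tilde{f})$ and preserving the congruence modulo $(t^2)$, the pair $(g(\tilde{p}),g(\hat{p}))$ lies in $\times^{3}\beta(B,B,id)$ with $B=k'((s))[[t]]$ and is again of the Bloch form. Hence the preceding Proposition gives $\omega(g(\tilde{p}),g(\hat{p}),id)=0$, so $\underline{g}_{*}(\omega(\tilde{p},\hat{p},id))=0$, and injectivity forces $\omega(\tilde{p},\hat{p},id)=0$.

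The only points requiring care — and where I expect the main, though modest, obstacle — are verifying that the chosen $g$ satisfies the localisation hypothesis of (P3) and that the image triples genuinely meet all the hypotheses of the preceding Proposition (unit entries, congruence modulo $(t^2)$, Bloch form). Both follow from $g$ being an $A$-algebra homomorphism compatible with reduction, but they must be checked in order to legitimately invoke the base case. The relative dimension $0$ case is handled identically, the only change being that $\underline{\pazocal{A}}$ is already a field admitting an embedding into some $k'((s))$ with injective differential map.
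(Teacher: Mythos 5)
Your proposal is correct and follows essentially the same route as the paper: the paper's own proof is precisely the reduction to the $k'((s))[[t]]$ case of the preceding Proposition by completing at a closed point of the special fiber and localising at $(t)$, with the injectivity of the induced map on $\Omega^1$ (already used in Corollary \ref{indepphi}) doing the work. Your write-up just makes the functoriality step (P3) and the preservation of the Bloch form explicit.
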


\begin{proof}
This follows from the above proposition after completing at a closed point in the special fiber and then localizing at the prime ideal $(t).$ 
\end{proof}

\section{Inifinitesimal regulator and the strong reciprocity theorem} 

In this section, we will give the construction of $\rho(f\wedge g \wedge h),$ where $f,g$ and $h$ are rational functions on a smooth proper curve $C_{2} $ over $k[t]/(t^2).$ The construction will rely   on the map $\omega$ of the previous section. In the first subsection,  we will give a local definition which depends on certain choices. In the next subsection, we will define the global version which is independent of all the choices. 

\subsection{Construction  of $\varepsilon$} In this subsection we will define a map $\varepsilon$ which will be defined on the triples of functions  on a generic lifting of the ring under consideration.

 \subsubsection{Notion of goodness.}\label{section goodness}  Let $S:={\rm Spec}(A),$ $S_{n}:={\rm Spec} (A_{n}),$ with $A_{n}:=k[[t]]/(t^n).$ Let $\mathcal{C}^{1} _{A_2}$ denote the following full subcategory of the category of $A_{2}$-algebras. The objects of  $\mathcal{C}^{1} _{A_2}$ consist of $A_{2}$-algebras $\pazocal{A}_{2}$ such that $\pazocal{A}_{2}$ is a  smooth $A_{2}$-algebra of  relative dimension 0 or 1, $\underline{\pazocal{A}}_{2}$ is an integral domain, the natural multiplication by $t$ map induces an isomorphism $\underline{\pazocal{A}}_{2} \xrightarrow{\sim} (t)$ of $\pazocal{A}_{2}$-modules and the residue fields of the closed points of $\underline{\pazocal{A}}_{2}$ are one of the following type: a finite extension of $k,$ if the relative dimension is 1; a finitely generated field of transcendence degree 1 over $k,$ or a field of Laurent series $k'((s))$ over a finite extension $k'$ of $k,$ if the relative dimension is 0.  Note that these algebras $\pazocal{A}_{2}$ are precisely the algebras that are obtained as $\pazocal{A}/(t^2)$ with $\pazocal{A}$ an object of $\mathcal{C}^{1} _{A}.$ 

In the rest of this section, assume that $\pazocal{A}_{2} ^{\circ}$ is an object of $\mathcal{C}^{1} _{A_{2}} $ such that $\underline{\pazocal{A}}_{2} ^{\circ}$ is a discrete valuation ring. We call  an element $\pi_2 \in \pazocal{A}_{2} ^{\circ},$  {\it a uniformizer}, if its reduction $\pi_1 \in \underline{\pazocal{A}}_{2} ^{\circ},$
 is a uniformizer,  in the usual sense.   This is equivalent to requiring that the natural map from $k'[[\pi_2,t]]/(t^2)$ to the completion of $\pazocal{A}_2 ^{\circ}$ at the maximal ideal being an isomorphism, where $k'$ is the residue field of the closed point. Let $\pazocal{A}_{2}$ denote the localization of $\pazocal{A}_{2} ^{\circ}$ at the prime ideal $(t),$ similar to the notation in \textsection \ref{residue}. If $B$ is a general $A$-algebra with $\underline{B}=B/(t)$ an integral domain, we also use the notation $B_{\eta}$ to denote the localisation of $B$ at the prime ideal $(t).$  
 
 \begin{definition}\label{gooddefn2}
 We let $\beta(\pazocal{A}^{\circ}_2,\pi_2):=\{y \in \pazocal{A}^{\times} _2  | y=\pi_2 ^{a}u, \; {\rm for \; some \; } a \in \mathbb{Z},  \; {\rm  and\;} u \in  (\pazocal{A}_2 ^{\circ})^{\times}\}$ and  say that $y \in \pazocal{A}_2  $ is  $\pi_2$-{\it good} if $y \in \beta(\pazocal{A} _{2}^{\circ},\pi_2).$
\end{definition}
 Note that if $y \in (\pazocal{A}_{2} ^{\circ})^{\times},$ then it is $\pi_{2}$-good for any choice of a uniformizer $\pi_{2}.$ On the other hand, if for example $\pazocal{A}_{2} ^{\circ}=k[[s,t]]/(t^{2})$ and $f=(s+t)/s \in \pazocal{A}_{2}=k((s))[t]/(t^2)$ then $f$ is not good for any choice of a uniformizer.

\subsubsection{Fixed generic lifting.}\label{fixedgenericlifting} Let $\tilde{\pazocal{A}}$ be an object in $\mathcal{C}^{1} _{A}$ together with  a fixed isomorphism 
$$\alpha:\tilde{\pazocal{A}}_2:=\tilde{\pazocal{A}}/(t^{2}) \xrightarrow{\sim} \pazocal{A}_{2} .$$ 
Assume further that $\tilde{\pazocal{A}}$ is complete with respect to the ideal $(t).$ Note that $\tilde{\pazocal{A}}$ is a lifting of $\pazocal{A}_{2}$ only and we do {\it not} assume that $\tilde{\pazocal{A}}$ is the localisation of a lifting of $\pazocal{A}^{\circ} _{2}.$ Let now 
$$
\beta(\tilde{\pazocal{A}},\alpha^{-1}(\pi_2)):= \{ y \in \tilde{\pazocal{A}} ^{\times }| \alpha(y|_{t^{2}}) \in \beta (\pazocal{A}_{2} ^{\circ},\pi_2) \},
$$
denote the elements in $\tilde{\pazocal{A}}$ whose reductions modulo $(t^2)$ are $\alpha^{-1}(\pi_2)$-good. Note that we do {\it not} require that the elements in $\tilde{\pazocal{A}}$ are good with respect to a uniformizer. In fact such a requirement would not have made sense since there is neither a  fixed  integral structure nor a choice of a  uniformizer on this integral structure on $\tilde{\pazocal{A}}.$ There is such a structure only modulo $(t^2).$  This  is  a crucial point in what follows. 
 
We will define a function 
$$
\varepsilon: \times ^{3} \beta(\tilde{\pazocal{A}},\alpha^{-1}(\pi_2)) \to k',
$$
where $k'$ is the residue field of the closed point of $\pazocal{A}^{\circ} _{2}.$ 

In order to do this, first we choose a lifting $\widetilde{\pazocal{B}^{\circ}}$ of $\pazocal{A}^{\circ} _2.$ In other words, $\widetilde{\pazocal{B}^{\circ}}$ is an object of $\mathcal{C}^{1} _{A},$ complete with respect to $(t),$ together with a fixed isomorphism 
$$
\tilde{\gamma}: (\widetilde{\pazocal{B}^{\circ}})_{2}:=\widetilde{\pazocal{B}^{\circ}}/(t^2) \xrightarrow{\sim} \pazocal{A} ^{\circ} _{2}.
$$
Let $\tilde{\pi}$ be a uniformizer on $\widetilde{\pazocal{B}^{\circ}}$ such that  $\tilde{\gamma}(\tilde{\pi}|_{t^{2}})=\pi_{2}.$ Moreover let $(\widetilde{\pazocal{B}^{\circ}})_{\eta}$ denote the localization of $\widetilde{\pazocal{B}^{\circ}}$ at the ideal $(t).$ Then $(\widetilde{\pazocal{B}^{\circ}})_{\eta}$  is a lifting of $\pazocal{A}_{2}$ together with the isomorphism $\tilde{\gamma}_{\eta},$ which is the localization of $\tilde{\gamma}:$ 
$$
\tilde{\gamma}_{\eta}:  (\widetilde{\pazocal{B}^{\circ}})_{\eta}/(t^2) = (\widetilde{\pazocal{B}^{\circ}})_{2,\eta}\xrightarrow{\sim} \pazocal{A}_{2}.
$$
Because of the choice of a uniformizer $\tilde{\pi}$ on $\widetilde{\pazocal{B}^{\circ}},$ we have the notion of $\tilde{\pi}$-goodness for elements of  $(\widetilde{\pazocal{B}^{\circ}})_{\eta},$ the set of which we  denoted by $\beta(\widetilde{\pazocal{B}^{\circ}},\tilde{\pi})$  in Definition \ref{gooddefn}. 

Now suppose that we start with $\tilde{p} \in  \times ^{3} \beta(\tilde{\pazocal{A}},\alpha^{-1}(\pi_2)).$ By assumption, $\alpha(\tilde{p} |_{t^2}) \in \times ^{3} \beta(\pazocal{A}_{2} ^{\circ},\pi_2).$ On the other hand, since $\tilde{\gamma}(\tilde{\pi}|_{t^2})=\pi_2,$ there is an element, not unique, say $\tilde{q} \in \times ^3\beta(\widetilde{\pazocal{B}^{\circ}},\tilde{\pi}),$ such that $\tilde{\gamma}_{\eta}(\tilde{q}|_{t^2})=\alpha(\tilde{p} |_{t^2}).$ 
This implies that $(\tilde{p},\tilde{q}) \in \times ^{3}\beta(\tilde{\pazocal{A}},(\widetilde{\pazocal{B}^{\circ}})_{\eta}, \tilde{\gamma}_{\eta} ^{-1}\circ\alpha).$ Therefore, by the construction of the previous section, we have 
$$
\omega(\tilde{p},\tilde{q},\tilde{\gamma}_{\eta} ^{-1}\circ\alpha ) \in \Omega^{1} _{\underline{\widetilde{\pazocal{B}}}/k},
$$
where $\underline{\widetilde{\pazocal{B}}}:=(\widetilde{\pazocal{B}^{\circ}})_{\eta}/(t).$ If we let $\underline{\widetilde{\pazocal{B}}}^{\circ}:=(\widetilde{\pazocal{B}^{\circ}})/(t)$ then by assumption $\underline{\widetilde{\pazocal{B}}}^{\circ}$ is a discrete valuation ring with field of fractions $\underline{\widetilde{\pazocal{B}}}.$ Therefore, the residue 
$
res(\omega(\tilde{p},\tilde{q},\tilde{\gamma}_{\eta} ^{-1}\circ\alpha ))$ of $\omega(\tilde{p},\tilde{q},\tilde{\gamma}_{\eta} ^{-1}\circ\alpha )$  is a well-defined element of the residue field $ \underline{\widetilde{\pazocal{B}}}^{\circ}/(\tilde{\pi})
 $
 of the closed point of $\underline{\widetilde{\pazocal{B}}}^{\circ}.$ This construction depends on the liftings $\widetilde{\pazocal{B}^{\circ}}$ and $\tilde{q},$ and the liftings $\tilde{\pazocal{A}}$ and  $\tilde{p}.$  Recall the residue map 
$$
res: \Lambda^{3}\beta(\widetilde{\pazocal{B}^{\circ}},\tilde{\pi}) \to  \Lambda^{2}(\widetilde{\pazocal{B}^{\circ}}/(\tilde{\pi}))^{\times}
$$
in (\ref{goncres})
and the map $\ell$ 
$$
\ell: \Lambda^{2}(\widetilde{\pazocal{B}^{\circ}}/(\tilde{\pi}))^{\times} \to \underline{\widetilde{\pazocal{B}}}^{\circ}/(\tilde{\pi}).
$$

We define 
$$
\varepsilon_{\tilde{q}}(\tilde{p}):=\ell(res(\tilde{q}) )+ res (\omega(\tilde{p},\tilde{q} ,\tilde{\gamma}_{\eta} ^{-1}\circ \alpha)).
$$
We first show that the expression above is independent of the choice of local liftings $\tilde{q}.$ 

\begin{proposition}\label{indeplocal}
Suppose that we have another lifting $\widehat{\pazocal{B}^{\circ}}$ of $\pazocal{A}^{\circ} _{2},$ with the isomorphism 
$$
\hat{\gamma}: (\widehat{\pazocal{B}^{\circ}})_{2}:=\widehat{\pazocal{B}^{\circ}}/(t^2) \xrightarrow{\sim} \pazocal{A} ^{\circ} _{2},
$$
 $\hat{\pi},$  a uniformizer on $\widehat{\pazocal{B}^{\circ}}$ such that  $\hat{\gamma}(\hat{\pi}|_{t^{2}})=\pi_{2},$ and  $\hat{q} \in \beta(\widehat{\pazocal{B}^{\circ}},\hat{\pi}),$ a $\hat{\pi}$-good lifting  of $\tilde{p},$ i.e. $\hat{\gamma}_{\eta}(|\hat{q}_{t^2})=\alpha(\tilde{p}|_{t^2}),$ then we have  
$$
\underline{\hat{\gamma}}(\varepsilon_{\hat{q}}(\tilde{p}))=\underline{\tilde{\gamma}}(\varepsilon_{\tilde{q}}(\tilde{p})),
$$
where $\underline{\tilde{\gamma}}$ (resp. $\underline{\hat{\gamma}}$ ) is the map from $\underline{\widetilde{\pazocal{B}}}^{\circ}/(\tilde{\pi})$ (resp. $\underline{\widehat{\pazocal{B}}}^{\circ}/(\hat{\pi})$) to  $\pazocal{A} ^{\circ} _{2}/(t,\pi_2)=k'$ induced by $\tilde{\gamma} $ (resp. $\hat{\gamma}$). 
\end{proposition}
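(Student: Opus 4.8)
The plan is to compare the two recipes by interpolating between the local liftings using a single application of the transitivity property (P5), and then to read off the residue with the help of the residue formula (P8). The key point — and the reason the definition of $\varepsilon$ carries the correction term $\ell(res(\tilde{q}))$ — is that this correction is exactly what absorbs the $\ell\circ res$ contributions produced by (P8), so that everything cancels up to transport of structure.

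First I would build the comparison isomorphism. Since $\tilde{\gamma}$ and $\hat{\gamma}$ are isomorphisms onto the common object $\pazocal{A}^{\circ}_{2}$, set $\chi:=\hat{\gamma}^{-1}\circ\tilde{\gamma}\colon \widetilde{\pazocal{B}^{\circ}}|_{t^2}\xrightarrow{\sim}\widehat{\pazocal{B}^{\circ}}|_{t^2}$, with generic localisation $\chi_\eta=\hat{\gamma}_{\eta}^{-1}\circ\tilde{\gamma}_{\eta}$. From $\tilde{\gamma}(\tilde{\pi}|_{t^2})=\pi_2=\hat{\gamma}(\hat{\pi}|_{t^2})$ I get $\chi(\tilde{\pi}|_{t^2})=\hat{\pi}|_{t^2}$, and from the fact that both $\tilde{q}$ and $\hat{q}$ reduce mod $t^2$ to $\alpha(\tilde{p}|_{t^2})$ I get $\chi_\eta(\tilde{q}|_{t^2})=\hat{q}|_{t^2}$. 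Hence $(\tilde{q},\hat{q})$ lies in $\times^{3}\beta(\widetilde{\pazocal{B}^{\circ}},\widehat{\pazocal{B}^{\circ}},\chi,\tilde{\pi},\hat{\pi})$, so it is exactly the kind of pair of \emph{good} liftings to which (P8) applies. I would then apply (P5) to the chain $\tilde{\pazocal{A}}\to(\widetilde{\pazocal{B}^{\circ}})_{\eta}\to(\widehat{\pazocal{B}^{\circ}})_{\eta}$ with $\chi_{12}=\tilde{\gamma}_{\eta}^{-1}\circ\alpha$, $\chi_{23}=\chi_\eta$ and composite $\chi_{13}=\hat{\gamma}_{\eta}^{-1}\circ\alpha$, obtaining
\[
\underline{\chi}_{*}\big(\omega(\tilde{p},\tilde{q},\tilde{\gamma}_{\eta}^{-1}\circ\alpha)\big)+\omega(\tilde{q},\hat{q},\chi_\eta)=\omega(\tilde{p},\hat{q},\hat{\gamma}_{\eta}^{-1}\circ\alpha).
\]

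Next I would take the residue at the closed point of $\underline{\widehat{\pazocal{B}}}^{\circ}$ of this identity. For the middle term, (P8) gives directly
\[
res_{\underline{\hat{\pi}}}\,\omega(\tilde{q},\hat{q},\chi_\eta)=\underline{\chi}\circ\ell\circ res_{\tilde{\pi}}(\tilde{q})-\ell\circ res_{\hat{\pi}}(\hat{q}).
\]
For the first term, the reduction $\underline{\chi}=\underline{\hat{\gamma}}^{-1}\circ\underline{\tilde{\gamma}}$ restricts to an isomorphism of discrete valuation rings $\underline{\widetilde{\pazocal{B}}}^{\circ}\xrightarrow{\sim}\underline{\widehat{\pazocal{B}}}^{\circ}$ carrying $\underline{\tilde{\pi}}$ to $\underline{\hat{\pi}}$, so residues transport: $res_{\underline{\hat{\pi}}}(\underline{\chi}_{*}\eta)=\underline{\chi}\big(res_{\underline{\tilde{\pi}}}\eta\big)$ for any log form $\eta$. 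Substituting both facts and recalling $\varepsilon_{\hat{q}}(\tilde{p})=\ell(res(\hat{q}))+res\,\omega(\tilde{p},\hat{q},\hat{\gamma}_{\eta}^{-1}\circ\alpha)$, the two $\ell(res_{\hat{\pi}}(\hat{q}))$ terms cancel and I am left with
\[
\varepsilon_{\hat{q}}(\tilde{p})=\underline{\chi}\big(res\,\omega(\tilde{p},\tilde{q},\tilde{\gamma}_{\eta}^{-1}\circ\alpha)+\ell(res_{\tilde{\pi}}(\tilde{q}))\big)=\underline{\chi}\big(\varepsilon_{\tilde{q}}(\tilde{p})\big).
\]
Applying $\underline{\hat{\gamma}}$ and using $\underline{\hat{\gamma}}\circ\underline{\chi}=\underline{\tilde{\gamma}}$ (which holds because $\chi=\hat{\gamma}^{-1}\circ\tilde{\gamma}$) then yields $\underline{\hat{\gamma}}(\varepsilon_{\hat{q}}(\tilde{p}))=\underline{\tilde{\gamma}}(\varepsilon_{\tilde{q}}(\tilde{p}))$, as required. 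If convenient I would first replace all rings by their completions at the relevant closed points, as in the proofs of (P8) and (P9), so that $\underline{\chi}$ is literally an isomorphism of complete discrete valuation rings and the residue-transport statement is transparent.

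The genuinely substantive step is the one where (P8) is invoked: it requires recognizing that, after forming $\chi$, the pair $(\tilde{q},\hat{q})$ consists of two good liftings related by an isomorphism sending $\tilde{\pi}$ to $\hat{\pi}$, so that the residue of $\omega(\tilde{q},\hat{q},\chi_\eta)$ is computable and equals precisely $\underline{\chi}\circ\ell\circ res(\tilde{q})-\ell\circ res(\hat{q})$. Everything else is bookkeeping, but the cancellation only works because the definition of $\varepsilon$ was set up with the $\ell(res(\tilde{q}))$ term tuned to match the output of (P8); this matching is the real content of the proposition. The only other point requiring care is the compatibility of the residue map with the isomorphism $\underline{\chi}$ of discrete valuation rings, which I would treat as transport of structure for the map $res_{|C|}$ of (\ref{goncres}).
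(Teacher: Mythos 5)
Your proposal is correct and follows essentially the same route as the paper: apply the transitivity property (P5) to interpolate between the two local liftings via $\chi=\hat{\gamma}^{-1}\circ\tilde{\gamma}$, take residues, and use the residue property (P8) — applicable precisely because $(\tilde{q},\hat{q})$ is a pair of good liftings with $\chi(\tilde{\pi}|_{t^2})=\hat{\pi}|_{t^2}$ — to express $res_{\underline{\hat{\pi}}}\,\omega(\tilde{q},\hat{q},\chi_{\eta})$ in terms of $\ell\circ res$, so that the correction terms in the definition of $\varepsilon$ cancel. The only cosmetic difference is that the paper pushes the (P5) identity forward by $\underline{\hat{\gamma}}_{\eta,*}$ before taking residues, whereas you transport at the end; these are equivalent.
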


\begin{proof} 
By the transitivity property (P5) applied on the generic point, we have 
$$
\underline{\tilde{\gamma}}_{\eta,*}(\omega(\tilde{p},\tilde{q} ,\tilde{\gamma}_{\eta} ^{-1}\circ \alpha))+ \underline{\hat{\gamma}}_{\eta,*}\omega(\tilde{q} ,\hat{q} , \hat{\gamma}_{\eta} ^{-1} \circ \tilde{\gamma}_{\eta} )= \underline{\hat{\gamma}}_{\eta,*}\omega(\tilde{p},\hat{q},\hat{\gamma}_{\eta} ^{-1}\circ \alpha).
$$
Taking residues in the above expression, the statement that we would like to prove reduces to: 
 
$$
\underline {\tilde{\gamma}}\circ\ell(res_{\tilde{\pi}} (\tilde{q}) )- \underline {\hat{\gamma}}\circ\ell(res _{\hat{\pi}}(\hat{q}))= \underline {\hat{\gamma}} \circ res_{\underline{\hat{\pi}}}( \omega(\tilde{q} ,\hat{q} ,  \hat{\gamma}^{-1}\circ\tilde{\gamma})).
$$
Note that in the last expression since $\hat{q}$ and $\tilde{q}$ are good liftings, i.e.
$$
(\tilde{q},\hat{q}) \in \times^{3} \beta(\widetilde{\pazocal{B} ^{\circ}},\widehat{\pazocal{B} ^{\circ}}, \hat{\gamma}^{-1}\circ\tilde{\gamma},\tilde{\pi},\hat{\pi} ),
$$
the residue  $res_{\underline{\hat{\pi}}}( \omega(\tilde{q} ,\hat{q} ,  \hat{\gamma}^{-1}\circ\tilde{\gamma}))$ can be computed in terms of $\ell,$ by the residue property (P8),  and hence gives the above expression. 
\end{proof}

\begin{definition}\label{defepsilon}
With the notation as above we  define $\varepsilon: \times ^{3} \beta(\tilde{\pazocal{A}},\alpha^{-1}(\pi_2)) \to k' ,$ by the formula 
$$
\varepsilon(\tilde{p}):=\underline{\tilde{\gamma}}(\varepsilon_{\tilde{q}}(\tilde{p}))=\underline{\tilde{\gamma}}(\ell(res(\tilde{q}) )+ res (\omega(\tilde{p},\tilde{q} ,\tilde{\gamma}_{\eta} ^{-1}\circ \alpha))).
$$
By the previous proposition, this definition is independent of the choice of the  local good lifting $\tilde{q}.$ In fact, by the multi-linearity and the anti-symmetry properties of $\omega,$  $\ell$ and $res,$ $\varepsilon$ induces a map 
$$
\varepsilon: \Lambda ^{3} \beta(\tilde{\pazocal{A}},\alpha^{-1}(\pi_2)) \to k'.
$$
\end{definition}

\subsection{The construction of $\rho$} Suppose that $C_{2}/S_{2}$ is a smooth and proper curve. We let $C:=C_{2} \times _{S_{2}}S_{1}$ be its special fiber, $\eta,$ the generic point of $C,$ and $|C|,$ the set of closed points of $C.$ Similarly, for an open subscheme  $U_{2} \subseteq C_{2},$ we let $U$ to be the special fiber.  We call an element of $k(C_{2}) ^{\times}:=\pazocal{O}_{C_{2},\eta} ^{\times},$ a  rational function on $C_{2}.$      We call $\mathcal{P}_2=\{ \pi_{2,c}|c \in |C| \} $ {\it a system of uniformizers}, if for every  $c\in |C| ,$ $\pi_{2,c} \in \hat{\pazocal{O}}_{C_2,c}$ is a uniformizer. Here $\hat{\pazocal{O}}_{C_{2},c}$ is the completion of the local ring $\pazocal{O}_{C_{2},c}$ at its maximal ideal.    We say that a rational function $f$ is $\mathcal{P}_{2}${\it -good}, if $f$ is $\pi_{2,c}$-good for every $c \in |C|.$ We denote the group of $\mathcal{P}_{2}$-good rational functions by $k(C_{2},\mathcal{P}_2)^{\times}.$  Similarly, we say that $p=(f,g,h)$ is  $\mathcal{P}_{2}${\it -good}, if all of $f,g,$ and $h$ are. If we let $|p|:=|div(f|_{C})|\cup |div(g|_{C})| \cup |div(h|_{C})| \subseteq |C|, $ then $p$ is $\pi_{2,c}$-good for $c \notin |p|,$ for any choice of a uniformizer $\pi_{2,c}.$ On the other hand, for example,  a triple of functions of the form $(s,s+t,\cdot)$ is not $\pi_2$-good on the spectrum of $k[[s,t]]/(t^2),$ for any choice of a uniformizer $\pi_2.$

Fix $p=(f,g,h)\in  \times^{3} k(C_{2},\mathcal{P}_2) ^{\times}.$ Choose a lifting  $\tilde{\pazocal{A}}$ of $\pazocal{A}_{2}:=\pazocal{O}_{C_{2},\eta},$ as in \textsection 3.1, together with a fixed isomorphism 
$$
\alpha:\tilde{\pazocal{A}}_2:=\tilde{\pazocal{A}}/(t^{2}) \xrightarrow{\sim} \pazocal{A}_{2}.
$$
Choose any lifting $\tilde{p}$ of $p$ to $\tilde{A}.$ For any $c \in |C|,$ we apply the construction in \textsection 3.1, with $\tilde{A}^{\circ} _{2}:=\hat{\pazocal{O}}_{C_{2},c}.$  
Since, by assumption, $p$ is $\mathcal{P}_{2}$-good, we have, for any $c \in |C|,$ 
$\tilde{p} \in \times ^{3}\beta(\tilde{\pazocal{A}}, \alpha^{-1}(\pi_{2,c})).$ 
Therefore corresponding to $c,$ we have $\varepsilon(\tilde{p};c) \in k(c).$ For any  element $y$ in $k(c),$ let ${\rm Tr}_{k}(y)$ denote its trace from $k(c) $ to $k.$

We define $\rho(\tilde{p})$ as:

\begin{eqnarray}\label{defnrhogeneric}
\rho(\tilde{p})=\sum_{c \in |C|} {\rm Tr}_k\varepsilon(\tilde{p};c ) \in k.
\end{eqnarray}
We will need the following proposition in order to show that this sum  makes sense, and to show that it defines the function that we are seeking. 

\begin{proposition}\label{indepgen}
For any  lifting $\tilde{p},$ the sum in the definition  (\ref{defnrhogeneric})  of $\rho(\tilde{p})$ is finite. Moreover, for any other   lifting 
$\hat{p},$ we have $\rho(\tilde{p})=\rho(\hat{p}).$
\end{proposition}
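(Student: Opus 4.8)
The plan is to establish the two assertions separately, reducing the independence statement to the reciprocity law for residues of a rational $1$-form on the projective curve $C,$ and settling finiteness by a direct regularity argument.

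For finiteness I would fix, using Lemma \ref{groth}, a global identification $\tilde{\pazocal{A}}\xrightarrow{\sim} k(C)[[t]]$ and expand the components of the chosen lifting as $\alpha_{0i}e^{\alpha_{1i}t+\tilde{\alpha}_{2i}t^2+\cdots}$ with $\alpha_{0i},\alpha_{1i},\tilde{\alpha}_{2i}\in k(C).$ These are finitely many rational functions, hence have only finitely many poles on $C.$ For a closed point $c$ lying outside $|p|$ and outside the finite set of poles of these coefficients, the reductions $\alpha_{0i}$ are units at $c$ and $\alpha_{1i},\tilde{\alpha}_{2i}$ are regular at $c.$ Choosing the auxiliary good local lifting $\tilde{q}$ of \S 3.1 to consist of units -- possible because $\alpha(\tilde{p}|_{t^2})$ consists of units at such $c$ -- forces $res(\tilde{q})=0,$ while the explicit formula (\ref{defomega}), together with the independence of $\omega$ from the auxiliary choices proved in \S 2.3, shows that $\omega(\tilde{p},\tilde{q},\tilde{\gamma}_{\eta}^{-1}\circ\alpha)$ is regular at the closed point, so its residue vanishes. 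Hence $\varepsilon(\tilde{p};c)=0$ off a finite set and the sum (\ref{defnrhogeneric}) is finite.

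For the independence, let $\tilde{p}$ and $\hat{p}$ be two liftings of $p.$ They agree modulo $(t^2),$ so $(\tilde{p},\hat{p})\in\times^{3}\beta(\tilde{\pazocal{A}},\tilde{\pazocal{A}},id),$ and -- crucially -- one and the same good local lifting $\tilde{q}$ at $c$ may be used to compute both $\varepsilon(\tilde{p};c)$ and $\varepsilon(\hat{p};c)$ via Definition \ref{defepsilon}, since $\varepsilon$ is independent of $\tilde{q}$ by Proposition \ref{indeplocal}. With this common $\tilde{q}$ the terms $\ell(res(\tilde{q}))$ cancel, and the transitivity property (P5) applied to the triple $(\tilde{p},\hat{p},\tilde{q})$ with $\chi_{12}=id$ and $\chi_{23}=\tilde{\gamma}_{\eta}^{-1}\circ\alpha$ gives $\omega(\tilde{p},\tilde{q},\chi_{23})-\omega(\hat{p},\tilde{q},\chi_{23})=\underline{\chi}_{23,*}\,\omega(\tilde{p},\hat{p},id),$ whence
\begin{eqnarray*}
\varepsilon(\tilde{p};c)-\varepsilon(\hat{p};c)=\underline{\tilde{\gamma}}\big(res(\underline{\chi}_{23,*}\,\omega(\tilde{p},\hat{p},id))\big).
\end{eqnarray*}
The decisive observation is that $\omega(\tilde{p},\hat{p},id)\in\Omega^{1}_{\underline{\tilde{\pazocal{A}}}/k}=\Omega^{1}_{k(C)/k}$ is a single global rational $1$-form on the special fiber, entirely independent of $c;$ the point $c$ enters only through the completion (property (P2)), the isomorphism $\underline{\chi}_{23},$ and the residue at the closed point. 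Unwinding these identifications shows that the right-hand side is exactly the local residue $res_c\,\omega(\tilde{p},\hat{p},id)\in k(c).$

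Summing over all closed points I would then obtain
\begin{eqnarray*}
\rho(\tilde{p})-\rho(\hat{p})=\sum_{c\in|C|}{\rm Tr}_k\,res_c\,\omega(\tilde{p},\hat{p},id)=0
\end{eqnarray*}
by the residue theorem for the rational $1$-form $\omega(\tilde{p},\hat{p},id)$ on the smooth projective curve $C$ over $k,$ the sum being finite because this form has finitely many poles. I expect the main obstacle to be the middle step: one must verify carefully that the local quantities $\varepsilon(\tilde{p};c)-\varepsilon(\hat{p};c),$ which are defined through the auxiliary local liftings $\widetilde{\pazocal{B}^{\circ}}$ and a priori depend on all of those choices at each $c,$ genuinely assemble into the residues at the various $c$ of one and the same global form $\omega(\tilde{p},\hat{p},id).$ This compatibility is precisely what the transitivity (P5), the completion property (P2), and the identification of $\underline{\chi}_{23}$ with the $c$-adic completion deliver, and it is what makes the reciprocity law applicable. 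By contrast, the finiteness and the cancellation of the $\ell(res(\tilde{q}))$ terms are comparatively routine.
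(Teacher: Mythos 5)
Your proof is correct in substance and, for the independence statement, follows the same route as the paper: both reduce the difference $\varepsilon(\tilde p;c)-\varepsilon(\hat p;c)$, via the transitivity property (P5) together with completion and functoriality, to the residue at $c$ of one global rational $1$-form in $\Omega^1_{k(C)/k}$, and then invoke the residue theorem on $C$. Two points of divergence deserve comment. First, you prove finiteness directly for an arbitrary lifting by expanding in a global chart $\tilde{\pazocal{A}}\cong k(C)[[t]]$ and claiming $\varepsilon(\tilde p;c)=0$ away from $|p|$ and the poles of the coefficient functions; the paper instead first manufactures one distinguished lifting --- a good lifting $\tilde p_U$ over an affine open $U_2$, good with respect to uniformizers lifting $\mathcal P_2|_{U_2}$ --- for which finiteness is immediate from the residue property (P8), and only then transfers finiteness to every other lifting through the same difference formula used for independence. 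Your version avoids that transfer, but it needs one further care: regularity at $c$ of the coefficients of $\tilde p$ in the global chart does not by itself give regularity of $\omega(\tilde p,\tilde q_c,\tilde{\gamma}_{c,\eta}^{-1}\circ\alpha_c)$, since the formula (\ref{defomega}) is evaluated in a local chart at $c$ and the transition between the global chart and the local one (a lifting of the fixed isomorphism $\tilde{\gamma}_{c,\eta}^{-1}\circ\alpha_c$, which \textsection 2.3 does \emph{not} allow you to alter) could a priori contribute poles; one must also shrink to an affine open over which the global chart respects the integral structures, i.e.\ carries $\pazocal O(U)[t]/(t^2)$ onto $\pazocal O(U_2)$ --- which is exactly the affine-open good lifting the paper posits, after which (P8) finishes the computation. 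Second, your independence argument places both liftings inside one and the same $(\tilde{\pazocal A},\alpha)$, so the comparison form is $\omega(\tilde p,\hat p,id)$; the proposition asserts independence also of the choice of the generic lifting $(\tilde{\pazocal A},\alpha)$ itself, and for that one runs the identical argument with $\omega(\tilde p_\eta,\hat p_\eta,\hat\alpha^{-1}\circ\tilde\alpha)$ in place of $\omega(\tilde p,\hat p,id)$, as the paper does. Neither point is fatal, but both must be addressed for a complete proof.
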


\begin{proof} 
First, let us choose an affine open set $U_2 \subseteq C_2$ such that $U_2$ has a lifting $\tilde{U}$ and $p|_{U_2}$ has a lifting $\tilde{p}_{U}$ to $\tilde{U}, $ which is good with respect to a system of parameters $\mathcal{P}_{2}(\tilde{U})$ on $\tilde{U}$ which lift $\mathcal{P}_{2}|_{U_2}.$  More precisely, $\tilde{U}$ is an affine scheme whose ring of functions $\tilde{A}_{U}$ is an object of $\mathcal{C}^{1} _{A}$ together with an isomorphism 
$$
\alpha_{U_2}: \tilde{\pazocal{A}}_{U}|_{t^2}\xrightarrow{\sim} \pazocal{A}_{2,U},
$$ 
where $\pazocal{A}_{2,U}$ is the ring of functions of $U_{2};$  $\mathcal{P}_{2}(\tilde{U})$ is a system of parameters on $\tilde{\pazocal{A}}_{U}$ which lift $\alpha_{U_2} ^{-1}(\mathcal{P}_{2}|_{U_2});$ and $\tilde{p}_{U} \in \tilde{\pazocal{A}}_{U},$ is a lifting of $\alpha_{U_2}^{-1}(p|_{U_2}).$

 Then on the generic point, we have the generic lifting $\tilde{p}_{U,\eta}.$ Let us look at the summands that appear in the expression (\ref{defnrhogeneric}) for  $\rho(\tilde{p}_{U,\eta}).$ These are the terms 
$$
{\rm Tr}_k\varepsilon(\tilde{p}_{U,\eta};c )={\rm Tr}_k(\ell(res_c(\tilde{q}_{c}) )+ res_{c} \omega(\tilde{p}_{U,\eta},\tilde{q}_{c} ,  \tilde{\gamma}_{c,\eta} ^{-1} \circ \alpha_{U_2,\eta})),
$$
where $\tilde{q}_{c}$ is any local good choice of lifting of $p$ at $c.$ Again to be more precise, $\widetilde{\pazocal{B}^{\circ} _c }$ is an object of $\mathcal{C}^{1} _{A},$ together with an isomorphism 
$$
\tilde{\gamma}_{c}: \widetilde{\pazocal{B}^{\circ} _c }|_{t^2} \xrightarrow{\sim} \hat{\pazocal{O}}_{C_{2},c},  
$$
and $\tilde{q}_{c}$ is a lifting of $\tilde{\gamma}_{c,\eta}^{-1}(p_{\eta}),$ which is good with respect to a uniformizer in $\widetilde{\pazocal{B}^{\circ} _c }$ which lifts $\tilde{\gamma}_{c}^{-1}(\pi_{2,c}).$

In case $c\in U,$ $\tilde{p}_{U,\eta},$ by our assumption,  is a  good lifting at $c.$ Since $\tilde{q}_{c}$ is also a good lifting,  by the residue property (P8) we have
$$
{\rm Tr}_k res_c \omega(\tilde{p}_{U,\eta},\tilde{q}_{c} , \tilde{\gamma}_{c,\eta} ^{-1} \circ \alpha_{U_2,\eta}))={\rm Tr}_k\ell(res_{c} \tilde{p}_{U})-{\rm Tr}_k\ell( res_c \tilde{ q_{c}}) 
$$
and therefore ${\rm Tr}_k\varepsilon(\tilde{p}_{U,\eta};c )={\rm Tr}_k\ell(res_c \tilde{p}_{U}).$ Since $\tilde{p}_{U}$ is a good lifting of $p|_{U_2},$  right hand side  is non-zero only when $c \in |p|.$ Hence for $c \in U,$ the terms $\varepsilon(\tilde{p}_{U,\eta}Ê;c )$ are non-zero for only finitely many $c.$ Since $C\setminus U$ is a finite set, we conclude that the sum (\ref{defnrhogeneric})  for $\rho(\tilde{p}_{U,\eta})$ is finite. 

Now let $\hat{p}_{\eta}$ and $\tilde{p}_{\eta}$ be  arbitrary generic liftings. In other words, we have  isomorphisms 
$$
\hat{\alpha}: \hat{\pazocal{A}}|_{t^2} \xrightarrow{\sim} \pazocal{A}_{2}
$$
and 
$$
\tilde{\alpha}: \tilde{\pazocal{A}}|_{t^2} \xrightarrow{\sim} \pazocal{A}_{2},
$$
and elements $\hat{p}_{\eta} \in \hat{\pazocal{A}}$ and $\tilde{p}_{\eta} \in \tilde{\pazocal{A}}$ which lift $\hat{\alpha}^{-1}(p_\eta)$ and $\tilde{\alpha}^{-1}(p_\eta).$ 
The differences 
$\varepsilon(\tilde{p}_{\eta}Ê;c)-\varepsilon(\hat{p}_{\eta}Ê;c )$ can be computed by choosing a good local lifting $\tilde{q}_{c}.$ Namely,  we have 
\begin{eqnarray*}
& &\varepsilon(\tilde{p}_{\eta}Ê;c )-\varepsilon(\hat{p}_{\eta}Ê;c )=\underline{\tilde{\gamma}_{c}}(\varepsilon_{\tilde{q}_{c}}(\tilde{p}_{\eta}))-\underline{\tilde{\gamma}_{c}}(\varepsilon_{\tilde{q}_{c}}(\hat{p}_{\eta}))=\underline{\tilde{\gamma}_{c}}(\varepsilon_{\tilde{q}_{c}}(\tilde{p}_{\eta})-\varepsilon_{\tilde{q}_{c}}(\hat{p}_{\eta}))\\
&=&\underline{\tilde{\gamma}_{c}}(res( \omega(\tilde{p}_{\eta},\tilde{q}_{c} ,\tilde{\gamma}_{c,\eta} ^{-1}\circ \tilde{\alpha}_{c}))-res (\omega(\hat{p}_{\eta},\tilde{q}_{c} , \tilde{\gamma}_{c,\eta} ^{-1}\circ \hat{\alpha}_{c}))).
\end{eqnarray*}
In the last expression, $\tilde{\alpha}_{c}$ (resp. $\hat{\alpha}_{c}$) is the map induced by $\tilde{\alpha}$ (resp.  $\hat{\alpha}$) after completing at $c; $ and $\tilde{\gamma}_{c,\eta}$ is the map induced by $\tilde{\gamma}_{c}$ after localising at $\eta.$
By the transitivity property (P5),  
$$
 \omega(\tilde{p}_{\eta},\tilde{q}_{c} ,\tilde{\gamma}_{c,\eta} ^{-1}\circ \tilde{\alpha}_{c})-\omega(\hat{p}_{\eta},\tilde{q}_{c} , \tilde{\gamma}_{c,\eta} ^{-1}\circ \hat{\alpha}_{c})= (\underline{\tilde{\gamma}_{c,\eta}} ^{-1}\circ \underline{\hat{\alpha}_{c}})_{*}\omega(\tilde{p}_{\eta}, \hat{p}_{\eta} ,  \hat{\alpha}_{c} ^{-1} \circ \tilde{\alpha}_{c}).
$$
Therefore, after taking residues, we have 
$$
\varepsilon(\tilde{p}_{\eta}Ê;c )-\varepsilon(\hat{p}_{\eta}Ê;c )=\underline{\hat{\alpha}_{c}}(res \, \omega(\tilde{p}_{\eta}, \hat{p}_{\eta} ,  \hat{\alpha}_{c} ^{-1} \circ \tilde{\alpha}_{c}) )=res\, (\underline{\hat{\alpha}_{c}}_{*}(\omega(\tilde{p}_{\eta}, \hat{p}_{\eta} ,  \hat{\alpha}_{c} ^{-1} \circ \tilde{\alpha}_{c}) )) 
$$
By the functoriality property (P3), $\underline{\hat{\alpha}_{c}}_{*}\omega(\tilde{p}_{\eta}, \hat{p}_{\eta} ,  \hat{\alpha}_{c} ^{-1} \circ \tilde{\alpha}_{c})$ is the restriction to the formal neighbourhood of $c,$ of the rational 1-form $\underline{\hat{\alpha}}_{*}(\omega(\tilde{p}_{\eta}, \hat{p}_{\eta} ,  \hat{\alpha} ^{-1} \circ \tilde{\alpha})).$ Using this in the last equality above, and taking traces,  we obtain: 
\begin{eqnarray}\label{trepsilongen}
{\rm Tr}_k\varepsilon(\tilde{p}_{\eta}Ê;c )-{\rm Tr}_k\varepsilon(\hat{p}_{\eta}Ê;c )={\rm Tr}_{k} \,res_{c}\, \underline{\hat{\alpha}}_{*}(\omega(\tilde{p}_{\eta}, \hat{p}_{\eta} ,  \hat{\alpha} ^{-1} \circ \tilde{\alpha})),
\end{eqnarray}
for every $c \in |C|.$

Since the rational 1-form $\underline{\hat{\alpha}}_{*}(\omega(\tilde{p}_{\eta}, \hat{p}_{\eta} ,  \hat{\alpha} ^{-1} \circ \tilde{\alpha}))$ has only finitely many singularities, by the identity  (\ref{trepsilongen}), the finiteness of the sum for $\rho(\tilde{p}_{\eta})$ and for $\rho(\hat{p}_{\eta})$ are equivalent. Since we know the finiteness of the sum for one lifting, namely for $\tilde{p}_{U,\eta},$ we therefore know it for all the liftings. This proves the first statement. 

In order to prove the second statement, we note that by summing (\ref{trepsilongen}) above over all $c \in |C|$: 
$$
 \rho(\tilde{p}_{\eta})-\rho(\hat{p}_{\eta})=\sum _{c \in |C|}{\rm Tr}_k \, res_{c}\, \underline{\hat{\alpha}}_{*}(\omega(\tilde{p}_{\eta}, \hat{p}_{\eta} ,  \hat{\alpha} ^{-1} \circ \tilde{\alpha})).
$$
 Since the sum of the residues of a 1-form on a curve is 0, we have $ \rho(\tilde{p}_{\eta})=\rho(\hat{p}_{\eta}).$
\end{proof}

\begin{definition} For $p \in \times ^{3} k(C_{2},\mathcal{P}_{2})^{\times},$  we  define the value of the regulator $\rho$ on $p$ as
\begin{eqnarray}\label{regcurves}
\rho(p):= \sum_{c\in |C|}{\rm Tr}_k(\ell(res_c(\tilde{q}_{c}) )+ res_{c} \omega(\tilde{p}_{\eta},\tilde{q}_{c} ,  \tilde{\gamma}_{c,\eta} ^{-1} \circ \alpha_c )),
\end{eqnarray}
where $\tilde{p}_{\eta}$ is a lifting of $p_{\eta},$ through a generic lifting $\tilde{\pazocal{A}}$ and an isomorphism 
$
\alpha: \tilde{\pazocal{A}}/(t^2)\xrightarrow{\sim} \pazocal{O}_{C_{2},\eta};
 $
and $\tilde{q}_{c}$ is a local good choice of a lifting, through a local lifting $\widetilde{\pazocal{B}^{\circ}_{c}}$ and an isomorphism 
$
\tilde{\gamma}_{c}: \widetilde{\pazocal{B}^{\circ}_{c}}/(t^2) \xrightarrow{\sim}  \hat{\pazocal{O}}_{C_{2},c}.
$ By the above Proposition \ref{indeplocal} and Proposition \ref{indepgen}  this sum is finite and is independent of the choice of a generic lifting $\tilde{p}_{\eta}$ and the choice of good local liftings $\{\tilde{q} _{c}|c \in C\}.$ 
\end{definition}

By the multi-linearity and the anti-symmetry of the functions appearing in the definition of $\rho,$ we see that  $\rho$ induces a map 
$$
\rho: \Lambda^{3} k(C_{2},\mathcal{P}_{2})^{\times} \to k.
$$

The following  corollary gives a more explicit expression for $\rho(p).$ Choose  a cover $\{ U_{i}\}_{1 \leq i \leq r}$ of $C_{2}$ by  open sets   such that there are liftings $\tilde{U}_{i}$ of $U_{i}$ to smooth schemes over $S$ whose ring of regular functions lie in $\mathcal{C}^{1}_{A},$ and functions  $\tilde{p}_i$  on $\tilde{U}_{i}$ which are good liftings of $p|_{U_{i}},$ with respect to a system of uniformizers which lift that on $U_{i}.$    If $X$ is any subset of $U_{i},$ let $|X|$ denote the set of closed points in $X$ and  we let $r_{X}(\tilde{p}_i)$ denote the sum of the local contributions ${\rm Tr}_k\,\ell(res_x(\tilde{p}_{i})),$ for $x\in |X|.$ Note that since $\tilde{p}_i$ is a good lifting of $p$ on $U_{i},$ these contributions are possibly non-zero only for $x \in |X|\cap |p|.$ Similarly, if $\omega$ is any rational 1-form on $U_{i},$ we let $\omega_{X}$ denote the sum of the residues $\sum _{x \in |X|} {\rm Tr}_k \,res_{x} \omega.$

\begin{corollary}\label{explicitrho}
With notation as above, let $U_{i} ':=U_i \setminus \cup_{i+1 \leq j \leq r} U_{j},$ for $i<r.$ Then we have
\begin{eqnarray}\label{setdefn}
 \rho(p)=r_{U_{r}}(\tilde{p}_{r})+ \sum _{1 \leq i \leq r-1} \big(
r_{U_{i} '}(\tilde{p}_{i})+\omega_{U_i '} (\tilde{p}_{r},\tilde{p}_{i}, \chi_{U_i \cap U_r}) \big),
\end{eqnarray}
where  $\chi_{U_{i}\cap U_{r}}$ denotes the isomorphism between $(\tilde{U}_{r}|_{U_{i}\cap U_{r}})|_{t^2}\xrightarrow{\sim} (\tilde{U}_{i}|_{U_{i}\cap U_{r}})|_{t^2}.$
\end{corollary}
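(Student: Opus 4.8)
The plan is to compute $\rho(p)$ straight from the definition (\ref{regcurves}), using the freedom granted by Proposition \ref{indepgen} and Proposition \ref{indeplocal}: the value is independent of the generic lifting $\tilde p_\eta$ and of the local good liftings $\tilde q_c$, so I may tailor both to the given cover. For a closed point $c$ set $m(c):=\max\{i: c\in U_i\}$. As the $U_i$ cover $C_2$, the sets $|U_r|$ and $|U_i'|$ $(1\le i\le r-1)$ partition $|C|$: a point $c$ lies in $U_{m(c)}'$ if $m(c)<r$, and in $U_r$ if $m(c)=r$, since $U_i'=U_i\setminus\bigcup_{j>i}U_j$ records exactly the points whose largest containing index is $i$. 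I would then take the generic lifting to come from the reference chart, $\tilde p_\eta:=(\tilde p_r)_\eta$ with $\alpha:=(\alpha_{U_r})_\eta$, and for each $c$ take $\tilde q_c$ to be the completion of $\tilde p_{m(c)}$ at $c$, which is a good local lifting because $\tilde p_{m(c)}$ is good on $U_{m(c)}\ni c$.

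With these choices I evaluate ${\rm Tr}_k\,\varepsilon(\tilde p_\eta;c)$ in two cases. If $m(c)=r$, then $c\in U_r$ and, since $\tilde p_r$ is good on $\tilde U_r$, the generic lifting $\tilde p_\eta$ is a good lifting at $c$; exactly as in the proof of Proposition \ref{indepgen}, the residue property (P8) gives $res_c\,\omega(\tilde p_\eta,\tilde q_c,\cdot)=\ell(res_c\tilde p_r)-\ell(res_c\tilde q_c)$, whence $\varepsilon(\tilde p_\eta;c)=\ell(res_c\tilde p_r)$, independently of $\tilde q_c$. If instead $m(c)=i<r$, then $c\notin U_r$, so $\tilde p_\eta$ is no longer good at $c$ and the $\omega$-term survives. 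Here $\tilde q_c$ is the completion of $\tilde p_i$, so $\ell(res_c\tilde q_c)=\ell(res_c\tilde p_i)$, and the comparison isomorphism $\tilde\gamma_{c,\eta}^{-1}\circ\alpha_c$ entering $\omega$ is the restriction to $c$ of $\chi_{U_i\cap U_r}=\alpha_{U_i}^{-1}\circ\alpha_{U_r}$; thus $\varepsilon(\tilde p_\eta;c)=\ell(res_c\tilde p_i)+res_c\,\omega(\tilde p_\eta,\tilde q_c,\chi)$.

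The step I expect to demand the most care is the interpretation of the surviving $\omega$-term. Following the use of the functoriality property (P3) in the proof of Proposition \ref{indepgen}, the local form $\omega(\tilde p_\eta,\tilde q_c,\tilde\gamma_{c,\eta}^{-1}\circ\alpha_c)$ is the restriction to the local ring at $c$ of the single rational $1$-form $\omega(\tilde p_r,\tilde p_i,\chi_{U_i\cap U_r})$ on the special fibre of $U_i$. This form is regular on $\underline{U_i\cap U_r}$ but, read as a rational $1$-form on $\underline{U_i}$, it acquires poles, hence residues, precisely along $U_i\setminus U_r\supseteq U_i'$. The delicate point is that $c$ itself need not lie in $U_i\cap U_r$, so the equality $res_c\,\omega(\tilde p_\eta,\tilde q_c,\chi)=res_c\,\omega(\tilde p_r,\tilde p_i,\chi_{U_i\cap U_r})$ must be understood through this rational extension; its validity is exactly the content of the functoriality of $\omega$ under the localisation maps encoded in (P3).

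It remains to sum over the partition $|C|=|U_r|\sqcup\bigsqcup_{i<r}|U_i'|$. The points with $m(c)=r$ contribute $\sum_{c\in|U_r|}{\rm Tr}_k\,\ell(res_c\tilde p_r)=r_{U_r}(\tilde p_r)$. For each $i<r$, the points of $U_i'$ contribute $\sum_{c\in|U_i'|}{\rm Tr}_k\,\ell(res_c\tilde p_i)$ together with $\sum_{c\in|U_i'|}{\rm Tr}_k\,res_c\,\omega(\tilde p_r,\tilde p_i,\chi_{U_i\cap U_r})$, which are by definition $r_{U_i'}(\tilde p_i)$ and $\omega_{U_i'}(\tilde p_r,\tilde p_i,\chi_{U_i\cap U_r})$ respectively. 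Adding these gives (\ref{setdefn}). In short, the corollary is a reorganisation of the choice-independence already established, with (P8) handling the diagonal charts and (P3) the overlaps.
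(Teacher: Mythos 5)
Your proposal is correct and follows essentially the same route as the paper's own proof: the same choice of generic lifting $(\tilde p_r)_\eta$, the same assignment of local good liftings $\tilde q_c=\tilde p_{m(c),c}$ according to the partition $|C|=|U_r|\sqcup\bigsqcup_{i<r}|U_i'|$, and the same evaluation of the two kinds of terms. You merely spell out more explicitly the two points the paper leaves implicit, namely the use of (P8) to kill the $\omega$-term on $|U_r|$ and the use of (P3) to identify the surviving local $\omega$-terms with residues of the single rational $1$-form extending $\omega(\tilde p_r,\tilde p_i,\chi_{U_i\cap U_r})$.
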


\begin{proof}
In order to  compute $\rho(p) $ we need to choose a generic lifting of $p$ and a collection of good local liftings at each closed point. We let $\tilde{p}_{r,\eta}$ be the generic lifting. We choose  the good local  lftings $\tilde{q}_{c}$ as follows. If $c \in |U_{r}|,$ let $\tilde{q}_{c}=\tilde{p}_{U_{r},c}.$ Otherwise, there is a unique $ 1\leq i \leq r-1  $ such that $c \in |U_{i} '|$ and we let $\tilde{q} _{c}:= \tilde{p}_{i,c}.$     Then we have 
$$
\rho(p)=\rho(\tilde{p}_{r,\eta})=\sum_{c \in |C|}{\rm Tr}_k \varepsilon_{\tilde{q}_c}(\tilde{p}_{r,\eta})=\sum _{C \in |U_{r}|}{\rm Tr}_k\varepsilon_{\tilde{p}_{r,c}}(\tilde{p}_{r,\eta})+\sum _{1 \leq i \leq r-1} \sum _{c \in |U_{i}'|}{\rm Tr}_k \varepsilon_{\tilde{p}_{i,c}}(\tilde{p}_{r,\eta}).
$$
The statement then follows after noting that ${\rm Tr}_k\, \varepsilon_{\tilde{p}_{r,c}}(\tilde{p}_{r,\eta})={\rm Tr}_k \, \ell (res_c  \, \tilde{p}_{r}).$
\end{proof}

\subsection{Bloch group  relation.} In this section, we will construct a residue map from a weight 3 infinitesimal Bloch complex to a weight 2 infinitesimal Bloch complex and study the relation of $\rho$  to this map of complexes. The results of  this section will be used in the proof of the infinitesimal strong reciprocity conjecture. 

\subsubsection{Infinitesimal Bloch group of a discrete valuation ring and the residue map}\label{locressection}
In this subsection we follow the notation in \textsection \ref{section goodness}. We first define the Bloch group $B_{2}(\pazocal{A}_{2} ^{\circ},\pi_{2}).$ 

\begin{definition}
Let $\beta^{\flat}(\pazocal{A}_{2} ^{\circ},\pi_{2}) :=  \{x| x,\, 1-x \in  \beta(\pazocal{A}_{2} ^{\circ},\pi_{2}) \} .$  We define the Bloch group $B_{2}(\pazocal{A}_{2} ^{\circ},\pi_{2})$ to be the quotient of the free abelian group generated by the symbols $\{ x\}_{2},$ with   $x \in \beta^{\flat}(\pazocal{A}_{2} ^{\circ},\pi_{2}) ,$ modulo the relations generated by  $\{ x\}_{2}-\{ y\}_{2}+\{ (y/x)\}_{2}-\{ (1-x^{-1})/(1-y^{-1})\}_{2}+\{ (1-x)/(1-y)\}_{2},$ for  $x,y \in \beta^{\flat}(\pazocal{A}_{2} ^{\circ},\pi_{2}) ,$ with $x-y \in \beta(\pazocal{A}_{2} ^{\circ},\pi_{2}).$ As usual this gives us  a complex
 $
B_{2}(\pazocal{A}_{2} ^{\circ},\pi_{2}) \xrightarrow{\delta} \Lambda ^{2}\beta(\pazocal{A}_{2} ^{\circ},\pi_{2}) ,
 $ 
where $\delta(\{ x\}_{2}):=(1-x)\wedge x.$ 

\end{definition}
This induces part of a weight 3 complex: $B_{2}(\pazocal{A}_{2} ^{\circ},\pi_{2}) \otimes \beta(\pazocal{A}_{2} ^{\circ},\pi_{2})\xrightarrow{\Delta} \Lambda ^{3}\beta(\pazocal{A}_{2} ^{\circ},\pi_{2}) ,$ where $\Delta$ sends $\{ x\}_{2} \otimes y$ to $\delta(x)\wedge y.$ We also have a residue map   that gives a commutative diagram 
\[
\begin{CD}
&&  B_{2}(\pazocal{A}_{2} ^{\circ},\pi_{2}) \otimes \beta(\pazocal{A}_{2} ^{\circ},\pi_{2}) @>{\Delta}>> \Lambda ^{3}\beta(\pazocal{A}_{2} ^{\circ},\pi_{2}) \\
   && @V{res}VV     @V{res}VV\\
 &&  B_{2}(\pazocal{A}_{2} ^{\circ}/(\pi_{2})) @>{\delta}>> \Lambda^2 (\pazocal{A}_{2} ^{\circ}/(\pi_{2}))^{\times},
\end{CD}
\]
as in \cite{gon2}. The residue map on $\Lambda ^{3}\beta(\pazocal{A}_{2} ^{\circ},\pi_{2})$ is defined exactly as in \textsection \ref{residue}. The residue map on $B_{2}(\pazocal{A}_{2} ^{\circ},\pi_{2}) \otimes \beta(\pazocal{A}_{2} ^{\circ},\pi_{2}) $ is characterised by the following properties:

(i) $res(\{x \}_2 \otimes y)=0,$ if $x \notin (\pazocal{A}_{2} ^{\circ})^{\flat}:=\{a| a, 1-a \in (\pazocal{A}_{2} ^{\circ})^{\times} \} $
and 

(ii) $res(\{x \}_2 \otimes y)=n\{ \overline{x} \}_{2}, $ if $x  \in  (\pazocal{A}_{2} ^{\circ})^{\flat} $ and $y=\pi_{2} ^{n} u,$ with $u  \in (\pazocal{A}_{2} ^{\circ})^{\times},$  and $\overline{x}$ is the reduction of $x$ modulo $(\pi_{2}).$

\subsubsection{Global version}

Let us now assume  that $C_{2}/S_{2}$ is  a smooth and proper curve, $C$ denote its special fiber and  $\mathcal{P}_{2}$ a system of uniformizers on $C_{2}.$ For a finite extension $k'/k$ and $n \geq 1,$ we write $k'_n:=k'[t]/(t^n).$ Note that for every $c \in |C|,$ there is a choice of a uniformizer $\pi_{2,c}\in \mathcal{P}_{2},$ and this gives a quotient $\pazocal{O}_{C_{2},c}/(\pi_{2,c}),$ which is canonically isomorphic to $k(c)_{2}.$

\begin{definition}
Let $k(C_{2},\mathcal{P}_{2})^{\flat}:=  \{x| x,\, 1-x \in k(C_{2},\mathcal{P}_{2}) ^{\times} \} .$  Then we define the Bloch group $B_{2}(k(C_{2}),\mathcal{P}_{2})$ as the quotient of the free abelian group generated by the symbols $\{ x\}_{2},$ with   $x \in k(C_{2},\mathcal{P}_{2})^{\flat} ,$ modulo the relations generated by  $\{ x\}_{2}-\{ y\}_{2}+\{ (y/x)\}_{2}-\{ (1-x^{-1})/(1-y^{-1})\}_{2}+\{ (1-x)/(1-y)\}_{2},$ for  $x,y \in k(C_{2},\mathcal{P}_{2})^{\flat} ,$ with $x-y \in k(C_{2},\mathcal{P}_2) ^{\times}.$ This gives   a complex
 $
 B_{2}(k(C_{2}),\mathcal{P}_{2}) \xrightarrow{\delta} \Lambda ^{2} k(C_{2}),\mathcal{P}_{2})^{\times}.
 $ 

\end{definition}
We have natural maps $ k(C_2,\mathcal{P}_2)^{\times } \to \beta(\pazocal{O}_{C_{2},c},\pi_{2,c})$ and $B_{2}(k(C_{2}),\mathcal{P}_{2}) \to B_{2}(\pazocal{O}_{C_{2},c},\pi_{2,c}).$ Combining these with the sum of  the residue maps in \textsection \ref{locressection}, corresponding to $\pazocal{A}^{\circ} _{2}=\pazocal{O}_{C_{2},c},$  over all $c \in |C|$ and identifing $\pazocal{O}_{C_2,c}/(\pi_{2,c})$ with $k(c)_{2},$ we obtain a commutative diagram:
 \[
\begin{CD}
&&  B_{2}(k(C_2,\mathcal{P}_2)) \otimes k(C_2,\mathcal{P}_2)^{\times }@>{\Delta}>> \Lambda^{3} k(C_2,\mathcal{P}_{2})^{\times} \\
   && @V{res}VV     @V{res}VV\\
 && \oplus _{c \in |C|} B_{2}(k(c)_2) @>{\delta}>> \oplus _{c \in |C|} \Lambda^2 k(c)_2 ^{\times}.
\end{CD}
\]

We constructed an additive dilogarithm map $\ell i _2: B_{2}(k_2) \to k,$ for any field $k$  of characteristic 0 in \cite{Unver}. Let us briefly recall this function. It is given by the explicit formula 
$$
\ell i_{2}(\{ s+at \}_{2})=-\frac{a^3}{2s^{2}(1-s)^2},
$$
for   $s\in k^{\flat}:=k^{\times } \setminus \{ 1\}$ and $a \in k,$ on the generators of $B_{2}(k_2).$  It has also the following description using the notation of \textsection \ref{residue}: taking $B:=k[[t]]$ as in \textsection \ref{residue}, we have a map $\ell: \Lambda^{2} k[[t]] ^{\times } \to k.$
The composition $B_{2}(k[[t]]) \xrightarrow{\delta}  \Lambda^{2} k[[t]] ^{\times } \xrightarrow{\ell} k,$ then factors through the natural projection 
$B_{2}(k[[t]]) \twoheadrightarrow  B_{2}(k_2)$ to give $\ell i_{2}:B_{2}(k_2)\to k.$ 

Let $ \ell i_{2,c}$ denote the additive dilogarithm from $ B_{2}(k(c)_2)$ to $k(c),$ and $\ell i_{2,|C|}$  denote the composition  $ \oplus _{c \in |C|} B_{2}(k(c)_2) \xrightarrow{\oplus \ell i_{2,c}} \oplus _{c \in |C|}k(c)  \xrightarrow{\sum {\rm Tr}_{k(c)/k}}  k.$ Then we have the following relation of this function with the regulator $\rho.$ 

\begin{proposition}\label{regdilog}
The functions  $\ell i _{2,|C|} \circ res$ and $\rho \circ \Delta$  from $ B_{2}(k(C_2,\mathcal{P}_2)) \otimes k(C_2,\mathcal{P}_2)^{\times }$ to $k$ are equal.  
\end{proposition}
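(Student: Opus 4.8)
The plan is to check the asserted equality on the generators $\{x\}_2\otimes y$ of $B_2(k(C_2,\mathcal{P}_2))\otimes k(C_2,\mathcal{P}_2)^{\times}$, with $x\in k(C_2,\mathcal{P}_2)^{\flat}$ and $y\in k(C_2,\mathcal{P}_2)^{\times}$; since both $\ell i_{2,|C|}\circ res$ and $\rho\circ\Delta$ are $\mathbb{Z}$-linear, agreement on generators gives the proposition. For such a generator $\Delta(\{x\}_2\otimes y)=(1-x)\wedge x\wedge y$, so the right-hand side is $\rho((1-x)\wedge x\wedge y)$, which I will evaluate from the defining formula \eqref{regcurves}. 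The governing idea is to use the freedom, guaranteed by Proposition \ref{indeplocal} and Proposition \ref{indepgen}, to compute $\rho$ with liftings of \emph{Bloch form}, so that the canonical $1$-form $\omega$ drops out by the relative Bloch group relation.

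Concretely, I would fix a generic lifting $\tilde p_\eta=(1-\tilde x_\eta,\tilde x_\eta,\tilde y_\eta)$ of $(1-x,x,y)$ and, at each closed point $c$, a good local lifting of the same shape $\tilde q_c=(1-\tilde x_c,\tilde x_c,\tilde y_c)$. The one delicate point here is that $1-\tilde x_c$ can be taken $\tilde\pi_c$-good simultaneously with $\tilde x_c$: because $x\in\beta^{\flat}$, the valuations $(v_c(x),v_c(1-x))$ of the reductions form one of the patterns $(0,0)$, $(a,0)$ with $a>0$, $(0,b)$ with $b>0$, or $(a,a)$ with $a<0$, and in each case a direct construction — for instance $\tilde x_c:=1-\tilde\pi_c^{\,b}\tilde w$ when $v_c(1-x)=b>0$ — yields a good lifting of $x$ whose complement is again good. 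With both arguments of Bloch form, Corollary \ref{relblochom} forces $\omega(\tilde p_\eta,\tilde q_c,\chi_c)=0$ for every $c$; the general isomorphism $\chi_c$ reduces to the identity case of that corollary through the definition of $\omega$, since the chosen lifting $\overline{\chi}_c$ and splitting are ring homomorphisms and hence preserve the shape $(1-\cdot,\cdot,\cdot)$. Thus \eqref{regcurves} collapses to
\[
\rho((1-x)\wedge x\wedge y)=\sum_{c\in|C|}{\rm Tr}_{k(c)/k}\,\ell\big(res_{\tilde\pi_c}((1-\tilde x_c)\wedge\tilde x_c\wedge\tilde y_c)\big).
\]

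It then remains to match this sum term by term with $\ell i_{2,|C|}(res(\{x\}_2\otimes y))=\sum_c{\rm Tr}_{k(c)/k}\,\ell i_{2,c}(res_c(\{x\}_2\otimes y))$, where $res_c(\{x\}_2\otimes y)=v_c(y)\{\overline x\}_2$ if $x$ reduces to a flat element at $c$ and $0$ otherwise. This is a local residue computation. When $x$ and $1-x$ are both units at $c$, multilinearity together with the characterization of $res_{\tilde\pi_c}$ gives $res_{\tilde\pi_c}((1-\tilde x_c)\wedge\tilde x_c\wedge\tilde y_c)=v_c(y)\,(1-\overline{\tilde x_c})\wedge\overline{\tilde x_c}$ in $\Lambda^2 k(c)[[t]]^{\times}$; applying $\ell$ and recalling that $\ell i_2=\ell\circ\delta$ with $\delta(\{z\}_2)=(1-z)\wedge z$ produces exactly $v_c(y)\,\ell i_{2,c}(\{\overline x\}_2)$. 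When instead $x$ or $1-x$ has a zero or pole at $c$, expanding the residue shows every surviving wedge carries a constant of $k(c)^{\times}$ in one slot: if $\tilde x_c$ (resp. $1-\tilde x_c$) is a non-unit the reduction $\overline{1-\tilde x_c}=1$ (resp. $\overline{\tilde x_c}=1$) annihilates the relevant terms, while in the case of a common pole the unit parts of $\tilde x_c$ and $1-\tilde x_c$ have opposite reductions, producing a factor $-1$. Since $\log^{\circ}$ vanishes on constants, $\ell$ kills all such terms and the local contribution is $0$, matching the vanishing residue on the Bloch side. Summing the matched local contributions over $|C|$ completes the argument.

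The main obstacle, conceptually, is the first move: seeing that $\rho$ of a Bloch element is best computed with Bloch-form liftings, so that $\omega$ disappears entirely via Corollary \ref{relblochom} and a global statement about $\rho$ becomes a finite sum of purely local residues. The principal technical hurdle supporting it is the simultaneous goodness of $\tilde x_c$ and $1-\tilde x_c$, which is exactly where the hypothesis $x\in\beta^{\flat}$ is needed; once these are in place, the remaining residue bookkeeping — in particular the vanishing in the non-flat case through $\log^{\circ}$ of constants — is routine.
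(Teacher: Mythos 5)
Your proposal is correct and follows essentially the same route as the paper: reduce to generators, choose Bloch-form liftings both generically and locally (with the same case analysis ensuring $\tilde x_c$ and $1-\tilde x_c$ are simultaneously $\tilde\pi_c$-good), kill the $\omega$-terms via Corollary \ref{relblochom}, and identify the surviving sum $\sum_c{\rm Tr}_k\,\ell(res\,\tilde q_c)$ with $\ell i_{2,|C|}\circ res$. The only difference is cosmetic: where you verify the final local identity by an explicit residue computation in each valuation pattern, the paper invokes the commutative residue diagram of \textsection 3.3.1 together with $\ell\circ\delta=\ell i_2$ and the fact that $\ell i_2$ depends on its argument only modulo $(t^2)$.
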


\begin{proof}

We need to show that both of the functions agree on a general element $\{ f\}_{2} \otimes g \in B_{2} (k(C_{2},\mathcal{P}_2)) \otimes k(C_{2},\mathcal{P}_2)^{\times} . $
We have  $\Delta(\{ f\}_{2} \otimes g)=(1-f) \wedge f \wedge g,$ and hence  $\rho\circ \Delta (\{ f\}_{2} \otimes g)=\rho (p), $ with $p=(1-f,f,g).$ In order to compute $\rho(p),$ we need to choose a generic lifting of $p_{\eta}$ and good local liftings of $p_{c},$ for every $c \in |C|.$

Let $\tilde{\pazocal{A}}$ denote a lifting of $\pazocal{O}_{C_{2},\eta},$ together with an isomorphism 
$\alpha: \tilde{\pazocal{A}}|_{t^2}  \xrightarrow{\sim } \pazocal{O}_{C_{2},\eta};$ and let $\widetilde{\pazocal{B}^{\circ} _{c} }$ be a lifting of $\hat{\pazocal{O}}_{C_2,c},$ together with isomorphisms $\tilde{\gamma}_{c}: \widetilde{\pazocal{B}^{\circ} _{c} }|_{t^2} \xrightarrow{\sim} \hat{\pazocal{O}}_{C_2,c},$ for every $c \in |C|.$ In order to choose a generic lifting of $p_{\eta},$ choose liftings $\tilde{f}_{\eta}$ and $\tilde{g}_{\eta}$ of $\alpha^{-1}(f_{\eta})$ and $\alpha^{-1}(g_{\eta})$ and let $\tilde{p}_{\eta}:=(1-\tilde{f}_{\eta},\tilde{f}_{\eta},\tilde{g}_{\eta}).$ For the local lifting at $c,$ first choose a uniformizer $\tilde{\pi}_c$ on $\widetilde{\pazocal{B}^{\circ} _{c} }$ lifting the given one $\pi_{2,c} $ on $\hat{\pazocal{O}}_{C_2,c}.$ Since $g_c$ is $\pi_{2,c}$-good, let $\tilde{g}_{c} $ be any $\tilde{\pi}_c$-good lifting of $g_c.$  By assumption both $f_c$ and $1-f_c$ are $\pi_{2,c}$-good.  If $c \notin div (f|C) \cup div((1-f)|C),$ and  if   $\tilde{f}_{c}$ is any lifting of $f_{c}$ then both $\tilde{f}_{c}$ and $1-\tilde{f}_{c}$ will be $\tilde{\pi}_{c}$-good. If $f|C$ vanishes at $c$, choose a $\tilde{\pi}_{c}$-good lifting $\tilde{f}_{c}$ of $f_c,$ then since $(1-f)|_C$ is not 0 at $c,$ $1-\tilde{f}_{c}$ becomes a $\tilde{\pi}_{c}$-good lifting of $1-f_c.$ In case, $(1-f)|_{C}$ vainshes at $c,$ choose $\tilde{f}_{c}$ such that $1-\tilde{f}_{c}$ is  a $\tilde{\pi}_c$-good lifting of $1-f_{c},$ then also $\tilde{f}_{c}$ becomes a $\tilde{\pi}_c$-good lifting of $f_c.$  The other remaining case is when $f|_{C}$ and $(1-f)|_{C}$ both have a pole at $c.$ In this case, if we let $\tilde{f}_{c}$ as any $\tilde{\pi}_{c}$-good lifting of $f_{c}$ then $1-\tilde{f}_{c}$ is a $\tilde{\pi}_c$-good lifting of $1-f_c.$   In each of these cases, we let $\tilde{q}_{c}:=(1-\tilde{f}_c,\tilde{f}_c,\tilde{g}_c)$ as the good local lifting of $p_c.$ 
 
 By definition, $\rho(p)=\sum _{c \in |C|}{\rm Tr}_{k} \varepsilon_{\tilde{q}_{c}}(\tilde{p}_{\eta})=
\sum_{c \in |C|}({\rm Tr_{k}} \ell (res\, \tilde{q}_{c})+{\rm Tr_{k}} res_c \, \omega (\tilde{p}_{\eta},\tilde{q}_{c}, \tilde{\gamma}_{c,\eta} ^{-1} \circ \alpha_c) )   .$
First we deal with the second summand. Note that 
$$res_c \, \omega (\tilde{p}_{\eta},\tilde{q}_{c}, \tilde{\gamma}_{c,\eta} ^{-1} \circ \alpha_c)=res_{c}  \, \omega (\alpha_c(\tilde{p}_{\eta}),\tilde{\gamma}_{c,\eta}(\tilde{q}_{c}), id) ,$$
with $\alpha_c(\tilde{p}_{\eta})=(1-\alpha_c(\tilde{f}_{\eta}),\alpha_c(\tilde{f}_{\eta}), \alpha_c(\tilde{g}_{\eta})  )$ and $\tilde{\gamma}_{c,\eta}(\tilde{q}_{c})=(1-\tilde{\gamma}_{c,\eta}(\tilde{f}_{c}), \tilde{\gamma}_{c,\eta}(\tilde{f}_{c}), \tilde{\gamma}_{c,\eta}(\tilde{g}_{c})).$ By the construction of $\tilde{f}_{\eta},\,  \tilde{g}_{\eta}, \tilde{f}_{c}$  and $\tilde{g}_{c},$ we see that  $(\alpha_c(\tilde{p}_{\eta}),\tilde{\gamma}_{c,\eta}(\tilde{q}_{c})) \in \Lambda^3 \beta(\hat{K}_{C_2,c},(t^2)),$ where $\hat{K}_{C_{2},c}$ is the localization of $\hat{\pazocal{O}}_{C_2,c}$ at the generic point of its special fiber.   By the above expressions,  we have: 
$$
(\alpha_c(\tilde{p}_{\eta}),\tilde{\gamma}_{c,\eta}(\tilde{q}_{c}))=\Delta ( \{ (\alpha_c(\tilde{f}_{\eta}), \tilde{\gamma}_{c,\eta}(\tilde{f}_{c})) \}_2 \otimes  (\alpha_c(\tilde{g}_{\eta}), \tilde{\gamma}_{c,\eta}(\tilde{g}_{c}))  ),
$$
with 
$ \{ (\alpha_c(\tilde{f}_{\eta}), \tilde{\gamma}_{c,\eta}(\tilde{f}_{c})) \}_2 \otimes  (\alpha_c(\tilde{g}_{\eta}), \tilde{\gamma}_{c,\eta}(\tilde{g}_{c}))  \in  B_{2} ( \hat{K}_{C_2,c},(t^2)) \otimes  \beta(\hat{K}_{C_2,c},(t^2)).$ The relative Bloch group relation for $\omega,$ namely Corollary  \ref{relblochom}, then implies that $\omega(\alpha_c(\tilde{p}_{\eta}),\tilde{\gamma}_{c,\eta}(\tilde{q}_{c}),id)=0.$ Hence
$$
\rho(p)=\sum_{c \in |C|}{\rm Tr_{k}} \ell (res\, \tilde{q}_{c}).
$$
Let us first look at the expression $res\, \tilde{q}_{c}=res\, (1-\tilde{f}_{c}  ) \wedge \tilde{f}_c \wedge \tilde{g}_c.$  Since $\Delta(\{\tilde{f}_{c} \}_{2} \otimes \tilde{g}_{c} )= (1-\tilde{f}_{c}  ) \wedge \tilde{f}_c \wedge \tilde{g}_c, $  the commutative diagram in \textsection \ref{locressection}, implies that 
$$
res\, \tilde{q}_{c}=\delta \circ res(\{\tilde{f}_{c} \}_{2} \otimes \tilde{g}_{c} ) \in \Lambda ^{2}(\pazocal{O}_{C_2,c}/(\tilde{\pi}_c))^{\times}=\Lambda ^{2}k(c)_2^{\times},
$$  
where the last equality is written using the {\it canonical} isomorphism between the $k$-algebras $\pazocal{O}_{C_2,c}/(\tilde{\pi}_c)$ and $k(c)_2.$ Applying $\ell$ to the above expression we obtain, 
$$
\ell (res\, \tilde{q}_{c})=\ell \circ\delta \circ res(\{\tilde{f}_{c} \}_{2} \otimes \tilde{g}_{c} ).
$$
On the other hand, $\ell \circ\delta= \ell i _{2},$ and $\ell i_{2}$ depends on its argument only  modulo $(t^2),$ as was described in the paragraph before this proposition.  Therefore, 
$$
\ell (res\, \tilde{q}_{c})=\ell i _{2,c} \circ res(\{f_{c} \}_{2} \otimes g_{c} ),
$$
since $\tilde{f}_{c}$ and $\tilde{g}_c$ are liftings of $f_c$ and $g_{c}.$ Summing over all $c \in |C|$ and taking traces, we obtain 
$$
\rho \circ \Delta (\{ f\}_{2} \otimes g)=\rho(p)=\ell i _{2,|C|}\circ res (\{ f\}_{2} \otimes g).
$$
This finishes the proof. 
\end{proof}
 
\subsection{Strong reciprocity conjecture}

In this section, we assume that $k$ is algebraically closed. The following theorem of Suslin is proven in much more generality in \cite{sus}. 

\begin{theorem*} (Suslin Reciprocity)
Let $C/k$ be a smooth and projective curve over $k,$ then the sum of the residue maps
\[
\begin{CD}
K_{3} ^{M} (k(C)) @>{\oplus res_c}>> \oplus _{c \in |C|} K_{2} ^{M} (k) @>{\sum}>> K_{2} ^{M}(k)
\end{CD}
\]
is 0. 
\end{theorem*}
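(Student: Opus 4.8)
The plan is to reduce the statement for an arbitrary curve $C$ to the single case $C=\mathbb{P}^1_k$, using the compatibility of the tame symbols with the transfer (norm) maps in Milnor $K$-theory. Throughout I write $\partial_c=res_c$ for the tame symbol at a closed point $c$, and I use the hypothesis that $k$ is algebraically closed, so that every residue field $k(c)$ equals $k$ and every transfer $N_{k(c)/k}$ is the identity; thus the assertion is simply that $\sum_{c\in|C|}\partial_c=0$ as maps $K_3^M(k(C))\to K_2^M(k)$.

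First I would pick a finite morphism $\pi\colon C\to\mathbb{P}^1_k$, which exists because any nonconstant element of $k(C)$ presents $k(C)$ as a finite extension of $k(\mathbb{P}^1)=k(t)$. Let $N\colon K_3^M(k(C))\to K_3^M(k(t))$ be the associated norm. The key input is Kato's compatibility of residues with norms: for each closed point $b\in|\mathbb{P}^1|$,
$$
\partial_b\circ N=\sum_{c\in\pi^{-1}(b)}N_{k(c)/k(b)}\circ\partial_c .
$$
Because $k$ is algebraically closed the inner transfers are trivial, so $\partial_b\circ N=\sum_{c\mapsto b}\partial_c$. Summing over all $b\in|\mathbb{P}^1|$ and regrouping the closed points of $C$ by their image under $\pi$ yields
$$
\sum_{b\in|\mathbb{P}^1|}\partial_b\circ N=\sum_{c\in|C|}\partial_c .
$$
Hence for every $\alpha\in K_3^M(k(C))$ one has $\sum_c\partial_c(\alpha)=\sum_b\partial_b\big(N(\alpha)\big)$, and it suffices to know the reciprocity law on $\mathbb{P}^1$ applied to the element $N(\alpha)$.

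For $C=\mathbb{P}^1_k$ I would invoke Milnor's structure theorem, which places the finite tame symbols in a split exact sequence
$$
0\to K_n^M(k)\to K_n^M(k(t))\xrightarrow{(\partial_b)_b}\bigoplus_{b\in|\mathbb{A}^1|}K_{n-1}^M(k(b))\to 0 ,
$$
together with Milnor's explicit formula for the residue $\partial_\infty$ at infinity. The classes coming from $K_n^M(k)$ have vanishing residue everywhere, so by multilinearity, the Steinberg relation, and the fact that over an algebraically closed field every rational function factors into linear terms, the total residue sum need only be checked on pure symbols $\{t-a_1,\dots,t-a_n\}$. There the residues at the finite points $t=a_j$ and at $\infty$ are computed directly, and the bookkeeping gives $0$; the weight-two instance is the classical Weil reciprocity $\sum_c\partial_c\{f,g\}=0$, and the general weight follows by the same computation (or by induction on $n$). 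This settles $\mathbb{P}^1$, and with the reduction above, the theorem.

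The main obstacle is the norm-compatibility formula $\partial_b\circ N=\sum N_{k(c)/k(b)}\circ\partial_c$. Both the existence of a well-defined transfer on Milnor $K$-theory (its independence of the chosen generators of the extension) and its commutation with the tame symbol are delicate results of Kato, and they carry essentially all the arithmetic content of the reciprocity law; granting them, the reduction to $\mathbb{P}^1$ and the $\mathbb{P}^1$ computation are formal. A more conceptual repackaging would read the statement as the well-definedness of the proper pushforward $p_*$ along $p\colon C\to\mathrm{Spec}\,k$ in the Rost--Schmid/Gersten complex for $\mathcal{K}^M_n$: reciprocity is exactly the assertion that the transfer $\sum_c N_{k(c)/k}$ on zero-cycles with $K_{n-1}^M$-coefficients annihilates the image of the boundary map out of $k(C)$.
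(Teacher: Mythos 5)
The paper offers no proof of this statement: it is quoted as a known theorem, attributed to Suslin with the remark that it ``is proven in much more generality in \cite{sus},'' and is then used only as an input (via Propositions 3.4.2 and 3.4.3) for the infinitesimal version. So there is nothing in the paper to compare your argument against line by line. What you have written is the standard proof from the literature: choose a finite morphism $\pi\colon C\to\mathbb{P}^1_k$, push the problem down to $k(t)$ using the compatibility of tame symbols with the Bass--Tate norm (Kato's theorem, which you correctly identify as carrying essentially all the content), and then settle $\mathbb{P}^1$ by Milnor's split exact sequence together with the explicit residue at infinity. This is correct, and it is consistent with the paper's standing hypothesis in \S 3.4 that $k$ is algebraically closed, which is what lets you discard the transfers $N_{k(c)/k(b)}$ and state the theorem with $K_2^M(k)$ rather than $K_2^M(k(c))$ in each summand.

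One small point deserves more care than your phrase ``the bookkeeping gives $0$'' suggests. After factoring into linear terms, a general symbol in $K_n^M(k(t))$ decomposes into symbols whose entries are either constants in $k^\times$ or linear polynomials $t-a$; it is not enough to treat only the two extreme cases (all entries constant, or all entries of the form $t-a_j$). The mixed symbols such as $\{c_1,\dots,c_j,t-a_{j+1},\dots,t-a_n\}$ have nonvanishing residues at the finite points, landing in $K_{n-1}^M(k)$, which is a large divisible group for $k$ algebraically closed; these must be seen to cancel against the residue at infinity. This is exactly the Bass--Tate computation behind Milnor's sequence and the weight-two case is Weil reciprocity, as you say, so the gap is one of exposition rather than substance --- but the statement ``need only be checked on pure symbols $\{t-a_1,\dots,t-a_n\}$'' is, as written, too strong.
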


There are Bloch complexes of  weight $n$ defined by Goncharov which conjecturally compute motivic cohomology of weight $n.$ Let $K$ be a field then 
$$
\Gamma(K,n): B_{n}(K) \to B_{n-1}(K)\otimes K^{\times} \to \cdots B_{2}(K) \otimes \Lambda^{n-2} K^{\times} \to \Lambda^n K^{\times}. 
$$
The cohomological complex $\Gamma(n,K)$ is  concentrated in degrees $[1,n],$ with $B_{n}(K)$ in degree 1 and conjecturally $H^{i}(\Gamma(n,K)_{\mathbb{Q}}) \simeq H ^{i}_{\pazocal{M}}(K,\mathbb{Q}(n)).$ Here  $H_{\pazocal{M}} ^{\cdot} (K,\mathbb{Q}(n))$ denotes motivic cohomology of weight $n.$ By Voevodsky's  description in terms of algebraic cycles and the Bloch-Grothendieck-Riemann-Roch theorem,  $H_{\pazocal{M}} ^{i} (K,\mathbb{Q}(n)) \simeq K_{2n-i} (K)^{(n)} _{\mathbb{Q}} .$ This is known when $i=n,$  since then $H^{n}(\Gamma(n,K))\simeq K_{n} ^{M}(K)_{\mathbb{Q}}\simeq K_{n}(K)_{\mathbb{Q}}^{(n)}.$ 

 In what follows only the Bloch group $B_2$ will be relevant. There are several different definitions of these groups and it is not known that these definitions are equivalent. Note that the definition of $B_{2} (A) $ that  we have been using is the free abelian group generated by $\{ a\}_{2},$ with $a \in A^{\flat}:=\{a|a, \, 1-a \in A^{\times} \},$  modulo the 5-term functional equation of the dilogarithm.    

We have a residue map $res_{c}: \Gamma(k(C),3) \to \Gamma(k,2),$ for every $c \in |C|.$ Summing them over all $c \in |C|,$ we obtain the commutative diagram
$$
\xymatrix{B_3(k(C)) \ar[r]  & B_2(k(C))\otimes k(C)^{\times}  \ar[r] ^-{ \Delta}\ar[d] ^{res_{|C|}}&\Lambda^{3}k(C)^{\times} \ar[d]^{res_{|C|}} \\ & B_{2}(k) \ar[r]^{\delta} & \Lambda^{2}k^{\times}.}
$$
By the Suslin reciprocity theorem $Im(res_{|C|}) \subseteq Im (\delta) $ in $\Lambda^{2} k^{\times}.$

\begin{conjecture*} (Strong reciprocity conjecture) The residue map $res_{|C|}: \Gamma(k(C),3) \to \Gamma(k,2),$ is  homotopic to 0. More precisely, there is a map $h,$ with $h(k^{\times } \wedge \Lambda^{2} k(C) ^{\times})=0,$ which gives a commutative diagram:  
$$
\xymatrix{B_3(k(C)) \ar[r]  & B_2(k(C))\otimes k(C)^{\times}  \ar[r] ^-{ \Delta}\ar[d] ^{res_{|C|}}&\Lambda^{3}k(C)^{\times} \ar[d]^{res_{|C|}} \ar@{.>}[dl] _{h} \\ & B_{2}(k) \ar[r]^{\delta} & \Lambda^{2}k^{\times}.}
$$
\end{conjecture*}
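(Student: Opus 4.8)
The diagram encodes two identities that a single map $h:\Lambda^{3}k(C)^{\times}\to B_{2}(k)$ must satisfy: on the top degree, $\delta\circ h=res_{|C|}$ as maps $\Lambda^{3}k(C)^{\times}\to\Lambda^{2}k^{\times}$, and in the middle degree, $h\circ\Delta=res_{|C|}$ as maps $B_{2}(k(C))\otimes k(C)^{\times}\to B_{2}(k)$; the condition on $B_{3}(k(C))$ is vacuous since $\Gamma(k,2)$ has no term in degree $0$. I would first record that Suslin reciprocity already furnishes \emph{some} set-theoretic lift: the composite of $\oplus_{c}res_{c}$ with $\Lambda^{2}k^{\times}\twoheadrightarrow K_{2}^{M}(k)$ vanishes, so $res_{|C|}(\Lambda^{3}k(C)^{\times})$ lies in $\ker(\Lambda^{2}k^{\times}\to K_{2}^{M}(k))=\delta(B_{2}(k))$, whence a preimage under $\delta$ exists for every class. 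The entire content of the conjecture is therefore not existence but \emph{coherence}: to choose these preimages so that the resulting $h$ is additive, is compatible with $\Delta$ in the strong sense (b), and annihilates $k^{\times}\wedge\Lambda^{2}k(C)^{\times}$.

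To build a canonical $h$, I would follow the philosophy of the infinitesimal Theorem~3.4.4, where the homotopy was assembled from the regulator $\rho$ together with the explicit structure of $B_{2}(k_{2})^{\circ}$: here the role of $\rho$ is played by an algebraic ``motivic Chow dilogarithm'', a direct analogue over a general algebraically closed $k$ of the integral $\rho_{\mathbb{R}}$ of (\ref{realchow}). The plan is to reduce to $C=\mathbb{P}^{1}$ by choosing a finite morphism $\pi:C\to\mathbb{P}^{1}$ and using the projection formula that relates the residues computed by $res_{|C|}$ to residues on $\mathbb{P}^{1}$ through the norm maps on the Bloch complexes --- the mechanism by which Suslin (Weil) reciprocity on a curve is deduced from the line. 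On $\mathbb{P}^{1}$ the residues are explicit, and one defines $h$ as a sum of Bloch symbols $\{r\}_{2}\in B_{2}(k)$ in suitable cross-ratios $r$ built from the zeros and poles of the $f_{i}$, arranged so that $\delta\{r\}_{2}=(1-r)\wedge r$ reproduces the residue in $\Lambda^{2}k^{\times}$.

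With such an $h$ in hand, identity (a) reduces, after the projection, to matching the cross-ratio formula against the residue computation on $\mathbb{P}^{1}$, which is a direct calculation with $\delta$ and the characterising properties of $res_{\hat{\pi}}$ recalled in (\ref{goncres}). Identity (b) is the substantive point: when the input is $\Delta(\{f\}_{2}\otimes g)=(1-f)\wedge f\wedge g$, one must show that $h((1-f)\wedge f\wedge g)\in B_{2}(k)$ equals $res_{|C|}(\{f\}_{2}\otimes g)$, computed via the characterisation of the residue on the $B_{2}\otimes k^{\times}$ term in \textsection~3.3. This is precisely where the five-term relation must be respected, and it forces $h$ to descend to a map compatible with the defining relations of $B_{2}(k)$ rather than merely of $\Lambda^{2}k^{\times}$. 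The normalisation $h(k^{\times}\wedge\Lambda^{2}k(C)^{\times})=0$ should then follow from antisymmetry and the absence of residues for constant functions.

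The hard part will be this coherence of $h$: showing that the cross-ratio dilogarithm formula respects the five-term relation (so that the strong identity (b), and not merely the $K$-theoretic identity (a), holds) and that the reduction to $\mathbb{P}^{1}$ through norm maps is compatible with the Bloch-complex differentials in both weights. This is the exact classical counterpart of the role played by the relative Bloch group relation (P11) and the delicate independence-of-choices lemmas of \textsection~2.3 in the infinitesimal setting; it is a genuine functional-equation analysis in $B_{2}(k)$ rather than a formal diagram chase, and I expect it --- not the existence of abstract lifts guaranteed by Suslin reciprocity --- to be the crux of the argument.
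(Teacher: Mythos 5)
There is nothing in the paper to compare your attempt against: the statement you were given is presented as a \emph{conjecture} (Goncharov's strong reciprocity conjecture, \cite{gon3}), stated only as motivation, and the paper proves no part of it. What the paper proves is the infinitesimal analogue, Theorem 3.4.4, and the mechanism there is instructive for seeing why your sketch falls short of a proof. In the infinitesimal setting the homotopy $h$ is not constructed by any clever choice of lifts: the structure theorem for $B_{2}(k_{2})^{\circ}$ from \cite{Unver} gives a canonical splitting $B_{2}(k_{2})^{\circ}\simeq \ker(\delta^{\circ})\oplus \mathrm{Im}(\delta^{\circ})$ with $\ker(\delta^{\circ})\xrightarrow{\sim}k$ realized by $\ell i_{2}$, so $h$ is \emph{forced} to be $h=(\ell i_{2}^{-1}\circ\rho,\ res_{|C|})$, and commutativity of the upper triangle reduces to the single identity $\ell i_{2,|C|}\circ res=\rho\circ\Delta$ (Proposition 3.3.5), proved via the relative Bloch group relation (P11). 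No analogous splitting of $B_{2}(k)$ exists classically --- $\ker(\delta)$ is a genuinely complicated group --- which is precisely why the classical statement is a conjecture rather than a theorem in this paper.

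Your proposal is honest about this: you correctly observe that Suslin reciprocity plus $\mathrm{coker}(\delta)=K_{2}^{M}(k)$ yields elementwise preimages, and you correctly identify that the entire content is the coherent, additive choice of $h$ compatible with $\Delta$ and killing $k^{\times}\wedge\Lambda^{2}k(C)^{\times}$. But the three steps that would constitute the proof --- the explicit cross-ratio formula for $h$ on $\mathbb{P}^{1}$, the norm/projection-formula descent from $C$ to $\mathbb{P}^{1}$ compatible with the Bloch-complex differentials in both weights two and three, and the verification that the resulting symbol-level formula respects the five-term relation --- are named as ``the hard part'' and deferred, not carried out. So what you have is a research plan, not a proof, and the gap is exactly where you locate it. (As a point of orientation beyond this paper: the route you sketch, norms on polylogarithmic complexes plus reduction to rational curves, is in fact the strategy by which the conjecture was eventually proved by Rudenko; but none of that machinery is available in, or supplied by, the present paper, and your sketch does not supply it either.)
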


We need some preliminary results before proving the infinitesimal version of the strong reciprocity conjecture. 

\begin{proposition}\label{ressum2}
The sum of the local residue maps $res_{|C|}: \Lambda^{2} k(C_2,\mathcal{P}_{2}) ^{\times} \to  k_{2} ^\times$ is 0.
\end{proposition}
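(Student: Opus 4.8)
The plan is to split the target via the canonical decomposition $k_{2}^{\times}\cong k^{\times}\oplus k$, where the first projection is reduction modulo $t$ and the second is $w\mapsto \log^{\circ}(w)[t]$, the coefficient of $t$ in $\log^{\circ}w$. Both are group homomorphisms and $res_{c}$ is a homomorphism out of $\Lambda^{2}k(C_{2},\mathcal{P}_{2})^{\times}$, so it suffices to show that each of the two induced sums over $|C|$ vanishes. The computational input I would record first is that, directly from the characterisation of the residue in (\ref{goncres}), if $f=\pi_{2,c}^{a}u$ and $g=\pi_{2,c}^{b}v$ with $u,v$ units in the local model at $c$, then $res_{c}(f\wedge g)=v(c)^{a}\,u(c)^{-b}\in k(c)_{2}^{\times}=k_{2}^{\times}$ (the last identity since $k$ is algebraically closed). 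This bilinear reformulation is what I would use throughout.

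For the $k^{\times}$-component I would argue that reduction modulo $t$ carries $res_{c}(f\wedge g)$ to the classical tame symbol of the reductions $\bar f,\bar g\in k(C)^{\times}$. Here $\mathcal{P}_{2}$-goodness is exactly what is needed: it forces $\bar f=\bar\pi_{c}^{a}\bar u$ with $\bar u$ a unit, so $v_{c}(\bar f)=a=v_{c}(f)$ and the mod-$t$ leading coefficient is the usual one. Consequently the $k^{\times}$-component of $\sum_{c}res_{c}(f\wedge g)$ is the product of the classical tame symbols of $\bar f,\bar g$ over all $c\in|C|$, which equals $1$ by classical Weil reciprocity on the smooth projective curve $C/k$. (Without the hypothesis $k=\bar k$ one would first compose with the norm $k(c)^{\times}\to k^{\times}$.)

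For the infinitesimal component I would fix a generic lifting $\pazocal{O}_{C_{2},\eta}\cong k(C)[t]/(t^{2})$ and write $f=\bar f(1+t\phi)$, $g=\bar g(1+t\psi)$ with $\phi,\psi\in k(C)$. Setting $\mathrm{li}:=\log^{\circ}(\cdot)[t]$, the bilinear formula gives $\mathrm{li}\,res_{c}(f\wedge g)=v_{c}(f)\,\mathrm{li}(v(c))-v_{c}(g)\,\mathrm{li}(u(c))$, which depends only on the infinitesimal leading coefficients at $c$. The heart of the argument is to show that $c\mapsto \mathrm{li}\,res_{c}(f\wedge g)$ is the local residue at $c$ of a single global meromorphic $1$-form $\Theta$ on $C$, whose leading term is $\psi\,d\log\bar f-\phi\,d\log\bar g$. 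Granting this, $\sum_{c}\mathrm{li}\,res_{c}(f\wedge g)=\sum_{c}res_{c}(\Theta)=0$ by the residue theorem for a rational $1$-form on a projective curve, which is the same input used in Proposition \ref{indepgen}. By multilinearity and antisymmetry it is enough to verify the residue identity on pairs, and this reduces to the explicit local formula underlying (P1) and the residue computation of (P8) in \textsection\ref{residue}.

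The hard part will be that the fixed uniformizers $\pi_{2,c}\in\mathcal{P}_{2}$ need not be compatible with any single generic lifting: writing $\pi_{2,c}=\bar\pi_{c}+t\rho_{c}$, the infinitesimal parts $\rho_{c}$ are independent local data and produce correction terms to $\mathrm{li}(u(c))$ beyond the naive value $\phi(c)$, and there is a further discrepancy between the generic splitting and the local splitting of $\hat{\pazocal{O}}_{C_{2},c}$. I therefore expect the main work to be proving that these corrections assemble into the residues of a globally defined rational $1$-form on $C$, so that the residue theorem still applies; the goodness hypothesis is what controls the poles of $\phi,\psi$ and makes the candidate form rational. This is precisely the weight-two shadow of the choice-independence established for the weight-three form in \textsection 2.3 and governed by the residue property (P8), so I would either transcribe a weight-two analogue of those lemmas or verify the assembly directly.
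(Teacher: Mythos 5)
Your overall strategy coincides with the paper's: split $k_{2}^{\times}\cong k^{\times}\oplus k$, dispose of the $k^{\times}$-component by classical Weil reciprocity applied to $f|_{C},g|_{C}$ (goodness guaranteeing that the reduction of $res_c$ is the classical tame symbol), and handle the infinitesimal component by exhibiting the local contributions as residues of a single rational $1$-form on $C$ and invoking the residue theorem. Your candidate form $\psi\,d\log\bar f-\phi\,d\log\bar g$ is (up to sign) exactly the form the paper produces. The problem is that you stop at the decisive step: you only \emph{expect} that the local quantities $\mathrm{li}\,res_{c}(f\wedge g)$, computed with uniformizers $\pi_{2,c}$ and good local liftings that are a priori unrelated to any single generic lifting, assemble into the residues of one globally defined form, and you defer this to ``a weight-two analogue'' of the lemmas of \textsection 2.3 or a direct verification. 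As written, the proof is therefore incomplete precisely where the content lies.

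The paper closes this gap with a short iterated-residue argument that you should adopt rather than redevelop the machinery of \textsection 2.3. Write the infinitesimal component of the local residue as
$res_{c}^{\circ}(f\wedge g)=res_{t}\tfrac{1}{t}\,res_{\tilde\pi_{c}}\bigl(d\log\tilde f_{c}\wedge d\log\tilde g_{c}\bigr)$,
where $\tilde\pi_{c}$ lifts $\pi_{2,c}$ and $\tilde f_{c},\tilde g_{c}$ are $\tilde\pi_{c}$-good local liftings (this identity is the same computation as your bilinear formula, using $\log^{\circ}(\cdot)[t]=res_{t}\tfrac{1}{t}d\log(\cdot)$). Now swap the two residues by anticommutativity: $res_{|C|}^{\circ}=-\sum_{c}res_{c}\,res_{t}\bigl(\tfrac{1}{t}\,d\log\tilde f_{c}\wedge d\log\tilde g_{c}\bigr)$. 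The inner expression $res_{t}\bigl(\tfrac{1}{t}\,d\log\tilde f_{c}\wedge d\log\tilde g_{c}\bigr)$ depends on $\tilde f_{c},\tilde g_{c}$ only modulo $t^{2}$, i.e.\ only on $f$ and $g$ themselves, so it is the restriction to the formal neighbourhood of $c$ of one rational $1$-form on $C$, independent of $c$ and of every local choice. This is what dissolves the ``hard part'' you flag: the corrections coming from $\pi_{2,c}=\bar\pi_{c}+t\rho_{c}$ and from the mismatch of splittings never enter, because after the swap the form being residued no longer refers to $\tilde\pi_{c}$ or to the local lifting at all. The sum of residues of a rational $1$-form on the projective curve $C$ is $0$, which finishes the proof. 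With this substitution your argument becomes the paper's; without it, the proposal has a genuine gap at its central claim.
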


\begin{proof}
 This last statement can be reduced to two statements since $k_{2} ^{\times} \simeq k^{\times} \oplus k. $ Fix $f \wedge g \in \Lambda^{2} k(C_2,\mathcal{P}_{2}) ^{\times}.$  That the map $\Lambda^{2} k(C_2,\mathcal{P}_{2}) ^{\times} \to  k ^\times$ is zero follows from applying the classical Weil reciprocity to $C$ and the functions $f|_C$ and $g|_C.$ On the other hand, denote the map $\Lambda^{2} k(C_2,\mathcal{P}_{2}) ^{\times} \to  k $ by $res_{|C|} ^{\circ}.$  It is the sum of $res^{\circ} _{c},$ for $c \in |C|.$ Here $res_{c}^{\circ}(f\wedge g):=res_t \frac{1}{t} res_{\tilde{\pi}_c}(d \log \tilde{f}_c \wedge d \log \tilde{g}_c),$ where $\tilde{\pi}_c$ is a uniformizer lifting $\pi_{2,c}$ and $\tilde{f}_{c}$ and $\tilde{g}_c$ are $\tilde{\pi}_c$-good local liftings of $f_c$ and $g_c.$  By the anti-commutativity of the residues we see that 
$$
res_{|C|} ^{\circ}=-\sum _{c \in |C|}res_{c}res_{t}(\frac{1}{t}d \log \tilde{f}_c \wedge d \log \tilde{g}_c).
$$
On the other hand,  we see that $res_{t}(\frac{1}{t}d \log \tilde{f}_c \wedge d \log \tilde{g}_c)$ defines a rational 1-form $\omega$ on $C$ independent of $c.$ 
The statement then follows from the fact that the sum of the residues of $\omega$ on $C$ being 0. 
\end{proof}

The following is an infinitesimal version of the Suslin reciprocity theorem. 
\begin{proposition}\label{infsuslin}
The composition $\Lambda^{3}k(C_2,\mathcal{P}_{2}) ^{\times} \xrightarrow{res_{|C|}} (\Lambda^{2}k_2^{\times})^{\circ} \to K_{2} ^{M}(k_2) ^{\circ}$ is 0. 
\end{proposition}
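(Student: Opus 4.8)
The plan is to separate the reduction modulo $t$ from the genuinely infinitesimal contribution, and to show that the latter is the total residue of a rational $1$-form on $C$, so that it dies by the residue theorem, exactly as in the proof of Proposition \ref{ressum2}. First I would observe that the image of $res_{|C|}(f\wedge g\wedge h)$ under $t\mapsto 0$ is the residue of the special fibre $f_0\wedge g_0\wedge h_0\in\Lambda^3k(C)^\times$, whose class in $K_2^M(k)$ vanishes by the Suslin reciprocity theorem quoted above. Hence the class of $res_{|C|}(f\wedge g\wedge h)$ in $K_2^M(k_2)$ lies in $K_2^M(k_2)^\circ$, and everything reduces to proving that this $\circ$-class is $0$.

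To work inside $K_2^M(k_2)^\circ$ I would invoke the identification $K_2^M(k_2)^\circ\cong\Omega^1_{k/\mathbb{Q}}$ (the Milnor analogue of van der Kallen's computation of the $K_2$ of the dual numbers; recall $\mathrm{char}\,k=0$). Writing a unit of $k_2^\times$ as $u_0e^{u_1t}$ with $u_0\in k^\times$ and $u_1\in k$, the induced map $\mu\colon(\Lambda^2k_2^\times)^\circ\to\Omega^1_{k/\mathbb{Q}}$ is $u\wedge v\mapsto u_1\,d\log v_0-v_1\,d\log u_0$, the symbols of the form $\{1+at,1+bt\}$ being trivial. Since a good rational function is generically of the shape $f_0e^{f_1t}$ with $f_0\in k(C)^\times$ and $f_1\in k(C)$, the quantity to be killed is $\sum_{c\in|C|}\mu\big(res_c(f\wedge g\wedge h)\big)\in\Omega^1_{k/\mathbb{Q}}$.

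To show this sum vanishes I would pair it with an arbitrary $\mathbb{Q}$-derivation $D$ of $k$; because derivations separate the elements of $\Omega^1_{k/\mathbb{Q}}$ in characteristic $0$, it is enough to prove $\sum_c\langle D,\mu(res_c(f\wedge g\wedge h))\rangle=0$ for each $D$. Extending $D$ to a derivation $\tilde D$ of $k(C)$ and building from $f_0,f_1,\dots,h_0,h_1$ and $\tilde D$ a single global rational $1$-form $\omega_D\in\Omega^1_{k(C)/k}$ of the shape $\mathrm{Alt}_3\!\big(f_1\,(\tilde D g_0/g_0)\,d\log h_0\big)$ (up to normalisation, in the spirit of the $1$-form appearing in Proposition \ref{ressum2}), I expect each pairing $\langle D,\mu(res_c(f\wedge g\wedge h))\rangle$ to equal $res_c\,\omega_D$. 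The residue theorem $\sum_c res_c\,\omega_D=0$ on the projective curve $C$ then finishes the proof, the finiteness of the set of poles of $\omega_D$ guaranteeing (as in Propositions \ref{indepgen} and \ref{ressum2}) that only finitely many $c$ contribute.

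The main obstacle is precisely this last local identification: one must check that, for arbitrary valuations of $f,g,h$ at $c$ and after the full alternation over the three functions, the infinitesimal part of the weight-three tame symbol $res_c(f\wedge g\wedge h)$, paired with $D$, matches $res_c\,\omega_D$ on the nose, including the verification that the a priori spurious terms carrying $\tilde D\pi/\pi$ either cancel or are absent. Here the exact normalisations of $\mu$ and of $\omega_D$ have to be pinned down together; once the representative case $f=\pi$, with $g,h$ units, is settled, the multilinearity and antisymmetry of both $res_c$ and $\mu$ reduce the general case to it. Conceptually this step is the first-order derivative of Suslin reciprocity, and if one wishes to bypass the Milnor-theoretic computation of $K_2^M(k_2)^\circ$ the same conclusion can be phrased as showing that $\sum_c res_c(f\wedge g\wedge h)$ lands in the subgroup generated by the Steinberg elements $(1-x)\wedge x$, which is the form of the statement needed for the homotopy in the strong reciprocity theorem \textup{3.4.4}.
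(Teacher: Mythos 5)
Your overall strategy coincides with the paper's: both proofs identify $K_{2}^{M}(k_2)^{\circ}$ with $\Omega^{1}_{k/\mathbb{Q}}$ via $\{a_1,a_2\}\mapsto res_t\frac{1}{t}d\log a_1\wedge d\log a_2$ (your $\mu$ is exactly this map), and both reduce the vanishing of $\sum_c$ to the residue theorem on the projective curve $C$. (Your opening reduction via classical Suslin reciprocity is harmless but unnecessary: $res_{|C|}$ is already defined into $(\Lambda^2 k_2^{\times})^{\circ}$.) The difference is in how the local contribution at $c$ is converted into a residue of a global form. The paper works directly with the \emph{absolute} rational $2$-form $\omega:=res_t\bigl(\frac{1}{t}d\log\tilde{f}_c\wedge d\log\tilde{g}_c\wedge d\log\tilde{h}_c\bigr)$, observes that it depends only on $f,g,h$ modulo $t^2$ and hence is globally defined on $C$, identifies the local contribution with $-res_c(\omega)$ by the anticommutativity of the iterated residues $res_t$ and $res_{\tilde{\pi}_c}$, and concludes by the residue theorem for $2$-forms (with values in $\Omega^1_{k/\mathbb{Q}}$). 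You instead contract with a derivation $D$ first and work with the relative $1$-form $\omega_D=\iota_{\tilde{D}}\omega$; this buys you the more familiar residue theorem for $1$-forms, at the price of the local identity $\langle D,\mu(res_c(f\wedge g\wedge h))\rangle=res_c\,\omega_D$, which you correctly single out as the main obstacle but do not prove.

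That identity is where the genuine gap sits, and it cannot be waved through: the terms carrying $\tilde{D}\pi/\pi$ do \emph{not} simply vanish; they cancel only against the discrepancy between $\tilde{D}(u)|_{c}$ and $D(u|_{c})$ for units $u$ (namely $\tilde{D}(u)(c)=D(u(c))+\tilde{D}s(c)\cdot\partial_s u(c)$), so the verification genuinely requires tracking both effects together. Moreover the argument only closes cleanly when $\tilde{D}$ preserves the local ring at $c$; for a fixed global extension $\tilde{D}$ of $D$ this fails at finitely many points, where $\iota_{\tilde{D}}$ of the regular part of $\omega$ acquires poles and contributes extra residues that must be accounted for. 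Note also that you cannot recover the local identity from the global residue theorem for $\omega_D$ alone — the sum of the local discrepancies is exactly $\langle D,\sum_c res_c\,\omega\rangle$, i.e.\ the quantity you are trying to prove is zero, so any such shortcut is circular. All of this is avoidable: staying with the absolute $2$-form as the paper does makes the local step a one-line consequence of $res_t\circ res_{\tilde{\pi}_c}=-res_{\tilde{\pi}_c}\circ res_t$ and removes the derivation bookkeeping entirely.
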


\begin{proof} 
There is an isomorphism $K_{2} ^{M}(k_2) ^{\circ}\simeq \Omega^{1} _{k/\mathbb{Q}} ,$ given by sending  $\{ a_1, a_2\}$ to $res_{t} \frac{1}{t} d\log(a_1) \wedge d \log (a_{2}).$  
For any $c \in |C|,$ the composition 
$$
\Lambda^{3}k(C_2,\mathcal{P}_{2}) ^{\times}  \xrightarrow{res_c} (\Lambda^{2}k_2^{\times})^{\circ}  \to \Omega^{1} _{k/\mathbb{Q} } 
$$ 
is given by sending $f\wedge g \wedge h$ to $res_{t}(res_{\tilde{\pi}_c}(\frac{1}{t} d \log(\tilde{f}_c) \wedge d \log (\tilde{g}_c) \wedge d \log (\tilde{h} _c))),$ where $\tilde{\pi}_{c}$ is any  unifomizer on a lifting of  $\hat{\pazocal{O}}_{C_{2},c},$ which lifts $\pi_{2,c}$ and $\tilde{f}_{c},\, \tilde{g}_{c}$ and $\tilde{h}_{c}$ are any $\tilde{\pi}_{c}$-good local liftings of $f, \, g $  and $h. $ Therefore, to prove the statement we need to show that the sum 
\begin{eqnarray}\label{suslinsum}
\sum _{c \in |C|} res_{t}(res_{\tilde{\pi}_c}(\frac{1}{t} d \log(\tilde{f}_c) \wedge d \log (\tilde{g}_c) \wedge d \log (\tilde{h} _c)))=-\sum _{c \in |C|} res_{c}(\tilde{\omega}_c)
\end{eqnarray}
is 0, where $\tilde{\omega}_{c}:=res_{t}(\frac{1}{t} d \log(\tilde{f}_c) \wedge d \log (\tilde{g}_c) \wedge d \log (\tilde{h} _c)). $ On the other hand, $\omega_c$ clearly depends only on $\tilde{f}_{c},\, \tilde{g}_c$ and $\tilde{h}_{c}$ modulo $t^2$ and hence $\tilde{\omega}_c$ is the restriction of an (absolute) rational 2-form $\omega$ on $C.$ Since the last expression in (\ref{suslinsum}) is the sum of all the residues of the  2-form $\omega$ on the curve $C,$ it is equal to 0. 
\end{proof}

\begin{proposition}\label{rho0}
The map $\rho: \Lambda^{3}k(C_{2},\mathcal{P}_2)^{\times} \to k,$  when restricted to $k_{2} ^{\times} \wedge \Lambda^{2}k(C_{2},\mathcal{P}_2)^{\times},$ is $0.$
\end{proposition}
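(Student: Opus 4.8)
The plan is to reduce to the explicit formula for $\rho$ in Corollary \ref{explicitrho} and then invoke the residue theorem on $C$. By the multilinearity and anti-symmetry of $\rho$, it suffices to treat a single generator $p=(a,g,h)$ with $a\in k_2^\times$ and $g,h\in k(C_2,\mathcal P_2)^\times$. Write $a=a_0+a_1t$ with $a_0\in k^\times$ and lift it to the \emph{constant} unit $\tilde a:=a_0e^{(a_1/a_0)t}\in k[[t]]^\times\subseteq A^\times$, so that $\log^{\circ}(\tilde a)=(a_1/a_0)\,t$ with vanishing $t^2$-coefficient. Since a constant is a unit, it is $\pi$-good for every uniformizer $\pi$, so $\tilde a$ may be used as the first entry of every good lifting of $p$, both generically and locally.

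Choose a cover $\{U_i\}_{1\le i\le r}$ and good liftings $\tilde p_i=(\tilde a,\tilde g_i,\tilde h_i)$ as in Corollary \ref{explicitrho}, writing the unit parts as $\tilde g_i=(g|_C)\,e^{\beta_{1,i}t+b_{2,i}t^2+\cdots}$ and $\tilde h_i=(h|_C)\,e^{\gamma_{1,i}t+c_{2,i}t^2+\cdots}$, where $b_{2,i},c_{2,i}\in\mathcal O(U_i)$ are regular on $U_i$ by goodness. Introduce for each $i$ the single meromorphic $1$-form on $C$
\[
\Theta_i:=\frac{a_1}{a_0}\bigl(c_{2,i}\,d\log(g|_C)-b_{2,i}\,d\log(h|_C)\bigr)\in\Omega^1_{k(C)/k}.
\]
The computation has two parts. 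First, using the residue map (\ref{goncres}) and the clean choice $\log^{\circ}(\tilde a)=(a_1/a_0)t$, one checks that $\ell(res_x(\tilde a\wedge\tilde g_i\wedge\tilde h_i))=res_x\Theta_i$ for every $x\in|U_i|$; the point is that $\ell(\underline{\tilde a}\wedge\underline u)=-\tfrac{a_1}{a_0}\,(\log^{\circ}\underline u)[t^2]$ for any unit $\underline u$. Second, because the first entry $\tilde a$ is constant, in the formula (P1) every term with $\tilde a$ outside the $\alpha_1$-slot dies: a term with $\tilde a$ in the $\alpha_0$-slot carries $d\log(a_0)=0$, and a term with $\tilde a$ in the difference-slot carries $\tilde\alpha_{21}-\hat\alpha_{21}=0$. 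The two surviving permutations give precisely
\[
\omega(\tilde p_r,\tilde p_i,\chi_{U_i\cap U_r})=\Theta_r-\Theta_i
\]
as rational $1$-forms on $C$.

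Substituting these two identities into the formula (\ref{setdefn}) of Corollary \ref{explicitrho}, the contributions telescope: for $x\in|U_i'|$ the summand is $res_x\Theta_i+res_x(\Theta_r-\Theta_i)=res_x\Theta_r$, while for $x\in|U_r|$ it already equals $res_x\Theta_r$. Since $|U_r|$ together with the $|U_i'|$ for $i<r$ partition $|C|$, this gives
\[
\rho(p)=\sum_{x\in|C|}{\rm Tr}_k\,res_x\Theta_r.
\]
As $\Theta_r\in\Omega^1_{k(C)/k}$ is a global rational $1$-form on the smooth proper curve $C/k$, the residue theorem (the same vanishing of the sum of residues used in Propositions \ref{indepgen}, \ref{ressum2} and \ref{infsuslin}) yields $\rho(p)=0$.

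The main obstacle is the second identity, namely recognizing that $\omega$ with one constant entry is exactly the difference $\Theta_r-\Theta_i$ of the two locally defined $1$-forms; this is where the explicit shape of (P1), the vanishing of $d\log(a_0)$, and the constancy of the $t^2$-coefficient of $\tilde a$ are all simultaneously used, and it is also what forces the otherwise delicate local terms $\ell\circ res$ to align with the same global form. Once both the local term and $\omega$ are expressed through the single global form $\Theta_r$, the telescoping over the partition and the residue theorem are routine.
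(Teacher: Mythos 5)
Your overall strategy is sound and is essentially the paper's: lift the $k_2^{\times}$-entry to a constant $a_0e^{(a_1/a_0)t}$ so that in (P1) only the terms carrying the factor $a_1/a_0$ survive, express everything as residues of a global rational $1$-form on $C$, and conclude by the residue theorem. (The paper allows a general lifting $r_0e^{r_1t+\tilde r_2t^2+\cdots}$ and kills the $\tilde r_2$-terms by a separate residue-theorem argument; your normalization $\tilde r_2=0$ is a legitimate simplification.) However, your second identity, $\omega(\tilde p_r,\tilde p_i,\chi_{U_i\cap U_r})=\Theta_r-\Theta_i$ \emph{as rational $1$-forms}, is false in general, and this is a genuine gap. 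The coefficients $b_{2,i},c_{2,i}$ entering $\Theta_i$ only make sense after choosing a splitting $\varphi_i:\underline{\pazocal{A}}_{U_i}\to\tilde{\pazocal{A}}_{U_i}$ (otherwise ``$\tilde g_i=(g|_C)e^{\cdots}$'' has no meaning), and these splittings do not glue. By definition, $\omega(\tilde p_r,\tilde p_i,\chi)=\Omega\bigl(\overline\varphi_i^{-1}(\overline\chi(\tilde p_r)),\overline\varphi_i^{-1}(\tilde p_i)\bigr)$, so the $t^2$-coefficients of $\tilde g_r,\tilde h_r$ that enter are computed in the $U_i$-coordinates, whereas $\Theta_r$ uses their $t^2$-coefficients in the $U_r$-coordinates. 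The transition $\overline\varphi_i^{-1}\circ\overline\chi\circ\overline\varphi_r$ is the identity mod $t$ but not mod $t^2$, and by (\ref{psiexponent}) the $t^2$-coefficient transforms with the correction $\alpha_1'a+\frac{\alpha_0'}{\alpha_0}b+\frac{\alpha_0''}{2\alpha_0}a^2-\frac12(\frac{\alpha_0'}{\alpha_0}a)^2$; only the $b$-part cancels in the antisymmetric combination you form, so $\omega(\tilde p_r,\tilde p_i,\chi)-(\Theta_r-\Theta_i)$ is a nonzero rational $1$-form in general. What you need, and what is actually true, is only equality of \emph{residues} at the points of $U_i'$; but that does not follow from reading off (P1) naively --- it requires the transitivity (P5) together with the residue property (P8), which is essentially the computation the paper carries out.

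A second, related inaccuracy: $b_{2,i}$ and $c_{2,i}$ are \emph{not} regular on $U_i$ ``by goodness.'' At a point where $g|_C$ has order $m\neq 0$, goodness gives $\tilde g_i=\tilde\pi^m u$, and the factor $\tilde\pi^m$ contributes $m$ times the $t^2$-coefficient of $\log(\tilde\pi/\varphi_i(\underline{\tilde\pi}))$, which has a simple pole there; hence $\Theta_i$ acquires double poles along $|p|$ and the verification of $\ell(res_x(\tilde a\wedge\tilde g_i\wedge\tilde h_i))=res_x\Theta_i$ at such points is precisely the content of (P8), not a one-line check (though the identity itself is correct). The paper sidesteps both issues by working with a single generic lifting $\tilde p_\eta$ and a single splitting of it: the ``tilde'' coefficients then become globally defined rational functions on $C$, so $d\log(\alpha_{02})\tilde\alpha_{23}-d\log(\alpha_{03})\tilde\alpha_{22}$ is one global form whose residues sum to zero, while the local ``hat'' contributions are cancelled directly against the $\ell\circ res$ terms via (P8). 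If you want to keep the covering formulation of Corollary \ref{explicitrho}, you must either prove the residue-level version of your telescoping identity using (P5) and (P8), or replace the pointwise identity of $1$-forms by one that accounts for the change-of-splitting correction.
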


\begin{proof}

Let $r \in k_2  ^{\times} $ and $g, \, h \in k(C_{2},\mathcal{P}_2)^{\times} .$  In order to compute $\rho(r\wedge g \wedge h),$ we need to choose local and  generic liftings of $r, \, g,$ and $h.$ Let us choose a lifting $\tilde{r} \in k[[t]]$ of $r.$ Then we choose the  generic lifting and the good local  liftings all to be equal to $\tilde{r}.$ We also  choose  a generic lifting and good local liftings of $g$ and $h$ arbitrarily, satisfying the hypotheses in the definition (\ref{regcurves}) of $\rho .$ Let us put $\tilde{r}=r_{0}e^{r_{1}t+\tilde{r}_{2}t^2+\cdots},$ with $r_{0} \in k^{\times}$ and $r_{1},\, \tilde{r}_{2}, \cdots \in k.$

First we simplify the expression $\sum _{c \in |C|}\ell(res_c(\tilde{q}_c)), $ where $\tilde{q}_c=(\tilde{r},\tilde{g}_c, \tilde{h}_c).$ Note  that $res_c(\tilde{q}_c)=-\tilde{r} \wedge res_{c}(\tilde{g}_c \wedge \tilde{h}_c).
$
For $\alpha \in k[[t]],$ let $\alpha[t^n]$ denote the coefficient of $t^n$ in $\alpha.$ Then we have, 
$$
\ell (res_c(\tilde{q}_c))=-\ell (\tilde{r} \wedge res_{c}(\tilde{g}_c \wedge \tilde{h}_c))=-\tilde{r}_{2} \cdot \log ^{\circ} (res_{c}(\tilde{g}_c \wedge \tilde{h}_c))[t]+r_{1}\cdot \log ^{\circ} ( res_{c}(\tilde{g}_c \wedge \tilde{h}_c))[t^2].
$$
We can rewrite $\log ^{\circ} (res_{c}(\tilde{g}_c \wedge \tilde{h}_c))[t]$ in terms of residues as:
$$\log ^{\circ} (res_{c}(\tilde{g}_c \wedge \tilde{h}_c))[t]=res_{t} \frac{1}{t}d \log ( res_{c}(\tilde{g}_c \wedge \tilde{h}_c))=res_{t} \frac{1}{t}( res_{c}(d \log (\tilde{g}_c) \wedge d \log (\tilde{h}_c))).$$
By the anti-commutativity of the residues, the last expression can be replaced with 
$$
-res_{c} ( res_{t}(\frac{1}{t}d \log (\tilde{g}_c) \wedge d \log (\tilde{h}_c))).
$$
Similar to the  arguments in the preceding propositions, $res_{t}(\frac{1}{t}d \log (\tilde{g}_c) \wedge d \log (\tilde{h}_c))$ defines a rational 1-form on $C,$ independent of $c,$ and hence the sum of its residues on the curve $C$ is equal to 0. Therefore, 
\begin{eqnarray}\label{firstsimp}
\sum _{c \in |C|}\ell (res_c(\tilde{q}_c))=r_1 \sum _{c\in |C|} \log ^{\circ} ( res_{c}(\tilde{g}_c \wedge \tilde{h}_c))[t^2].
\end{eqnarray}

Let $\alpha$ be  a generic lifting map  $\alpha:\tilde{\pazocal{A}}/(t^2)\xrightarrow{\sim}\pazocal{O}_{C_{2},\eta};$ and $\gamma_c,$ the local  lifting maps $\gamma_{c}:\widetilde{\pazocal{B}^{\circ}_{c}}/(t^2) \xrightarrow{\sim} \hat{\pazocal{O}}_{C_{2},c},$ as in the definition (\ref{regcurves}).
Let us denote a generic lifting of $(r,g,h)$ by $\tilde{p}_{\eta}:=(\tilde{r}, \tilde{g}_{\eta},\tilde{h}_{\eta}).$ Note that we denoted the local liftings by $\tilde{q}_{c}:=(\tilde{r},\tilde{g}_{c},\tilde{h}_{c}).$ 
Let $\psi: \underline{\tilde{\pazocal{A}}} \to \tilde{\pazocal{A}}$ be a splitting and $\overline{\psi}: \underline{\tilde{\pazocal{A}}}[[t]] \xrightarrow{\sim } \tilde{\pazocal{A}},$ the corresponding isomorphism.

Then, by the definition of $\omega,$ we have 
\begin{eqnarray}\label{secondsimp}
res_{c} \omega(\tilde{p}_{\eta},\tilde{q}_{c} ,  \tilde{\gamma}_{c,\eta} ^{-1} \circ \alpha_c )=res_{c}(\Omega(\overline{\psi}^{-1} (\tilde{p}_{\eta}) , \overline{\psi}^{-1} (\overline{\chi}_{c}^{-1} (\tilde{q}_{c}))),\end{eqnarray}
where $\overline{\chi}_{c}$ is any lifting of the isomorphism  $\tilde{\gamma}_{c,\eta} ^{-1} \circ \alpha_{c}.$ We will simplify the last expression. Let us put $\tilde{q}:=\overline{\psi}^{-1} (\tilde{p}_{\eta})$ and $\hat{q}:= \overline{\psi}^{-1} (\overline{\chi}_{c}^{-1} (\tilde{q}_{c})).$ Using the notation in 
(\ref{defomega}), we have $\tilde{q}:=(\tilde{y}_1,\tilde{y}_2,\tilde{y}_3),$ $\hat{q}:=(\hat{y}_1,\hat{y}_2,\hat{y}_3),$ with  $\tilde{y}_{1}=\hat{y}_{1}=\tilde{r}.$ Since $\Omega$ is a 1-form relative to $k,$ the terms involving $d \log (r_{0})$ in the expression of $\Omega$ will vanish. Also since $\tilde{y}_1=\hat{y}_1,$ we see also that the terms involving $\tilde{r}_{2}$ vanish. Therefore, $\Omega$ contains only the terms that have the factor $r_{1}$ in them. More precisely, 
$$
\Omega(\tilde{q},\hat{q})=r_{1} \big( d \log (\alpha_{02}) (\tilde{\alpha}_{23} -\hat{\alpha}_{23}) -  d \log (\alpha_{03}) (\tilde{\alpha}_{22} -\hat{\alpha}_{22})\big).
$$ 
Taking residues we have
\begin{eqnarray}\label{thirdsimp}
\;\;\;\;\;\;\;\;res_c \, \Omega(\tilde{q},\hat{q})=r_{1}\cdot res_{c}( d \log (\alpha_{02})  \tilde{\alpha}_{23}- d \log (\alpha_{03})\tilde{\alpha}_{22}  )-r_{1} \cdot \log ^{\circ} ( res_{c}(\tilde{g}_c \wedge \tilde{h}_c))[t^2],
\end{eqnarray}
where we replaced the term $res_{c} (d \log (\alpha_{02})  \hat{\alpha}_{23}- d \log (\alpha_{03}) \hat{\alpha}_{22}) $ with $ \log ^{\circ} ( res_{c}(\tilde{g}_c \wedge \tilde{h}_c))[t^2]$ using the fact that $\tilde{q}_{c}$ is a good lifting. 

By the definition  (\ref{regcurves}) of $\rho,$ we have 
$$
\rho(r\wedge g \wedge h)=\sum _{c \in |C|} (\ell (res_{c}(\tilde{q}_c)) +res_{c}\omega (\tilde{p}_{\eta},\tilde{g}_{c},\tilde{\gamma}_{c} ^{-1} \circ \alpha_{c})).
$$
Using (\ref{firstsimp}), (\ref{secondsimp}), and (\ref{thirdsimp}), we can rewrite this as 
$$
r_1\sum _{c \in |C| } res_{c}( d \log (\alpha_{02})  \tilde{\alpha}_{23}- d \log (\alpha_{03})\tilde{\alpha}_{22}  ).
$$
However, $ d \log (\alpha_{02})  \tilde{\alpha}_{23}- d \log (\alpha_{03})\tilde{\alpha}_{22}$ is a rational 1-form on $C,$ and hence the sum of its residues is equal to 0. 
This finishes the proof. 
\end{proof}

In order to  prove  the infinitesimal version of the strong reciprocity conjecture,  we then put together the results we have proved so far about $\rho$ in this paper and about the Bloch group $B_{2}(k_2)$ in \cite{Unver}. 
\begin{theorem} (Infinitesimal strong reciprocity conjecture) The infinitesimal part of the residue map 
$$
res_{|C|}: \Gamma(k(C_{2},\mathcal{P}_{2}),3) \to \Gamma(k,2)^{\circ}
$$
is homotopic to 0. More precisely, there is a map $h: \Lambda^{3}k(C_2,\mathcal{P}_{2}) ^{\times} \to B_{2}(k_2)^{\circ}$ which makes the diagram 
$$
\xymatrix{ B_2 (k(C_2,\mathcal{P}_{2}))\otimes k(C_2,\mathcal{P}_{2})^{\times}  \ar[r] ^-{ \Delta }\ar[d] ^{res_{|C|}}&\Lambda^{3}k(C_2,\mathcal{P}_{2}) ^{\times} \ar[d]^{res_{|C|}} \ar@{.>}[dl] _{h} \\  B_{2}(k_2)^{\circ} \ar[r]^{\delta^{\circ}} & (\Lambda^{2}k_2^{\times})^{\circ}}
$$
commute and has the property that $h(k_{2} ^{\times} \wedge \Lambda ^{2} k(C_2,\mathcal{P}_{2}) ^{\times})=0.$ \end{theorem}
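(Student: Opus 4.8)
The plan is to manufacture the homotopy $h$ out of the regulator $\rho$ together with the structural description of the infinitesimal Bloch group from \cite{Unver}. The two dotted arrows amount to requiring that both triangles in the diagram commute, i.e.
\[
\delta^\circ\circ h=res_{|C|}\ \text{ on }\Lambda^3 k(C_2,\mathcal{P}_2)^\times,\qquad h\circ\Delta=res_{|C|}\ \text{ on }B_2(k(C_2,\mathcal{P}_2))\otimes k(C_2,\mathcal{P}_2)^\times .
\]
The single input I would draw from \cite{Unver} is that the additive dilogarithm identifies the first cohomology of the infinitesimal weight-two complex with $k$; concretely, $\ell i_2$ restricts to an isomorphism $\ell i_2:\ker\big(\delta^\circ:B_{2}(k_2)^{\circ}\to(\Lambda^2k_2^\times)^{\circ}\big)\xrightarrow{\sim}k$. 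Note also that the explicit formula $\ell i_2(\{s+at\}_2)=-\frac{a^3}{2s^{2}(1-s)^2}$ vanishes for $a=0$, so $\ell i_2$ kills $B_2(k)$ and factors through the projection $B_2(k_2)\to B_2(k_2)^{\circ}$.

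Granting this, I would define $h$ to be the unique homomorphism characterised by the pair of equations $\delta^\circ(h(\xi))=res_{|C|}(\xi)$ and $\ell i_2(h(\xi))=\rho(\xi)$. To see that this is well defined, fix $\xi\in\Lambda^3k(C_2,\mathcal{P}_2)^\times$. The infinitesimal Suslin reciprocity of Proposition \ref{infsuslin} shows that $res_{|C|}(\xi)$ lies in the image of $\delta^\circ$, so there is a lift $b_0\in B_{2}(k_2)^{\circ}$ with $\delta^\circ(b_0)=res_{|C|}(\xi)$; the set of such lifts is the coset $b_0+\ker\delta^\circ$, and the isomorphism above lets me adjust $b_0$ inside $\ker\delta^\circ$ in a unique way so that the second equation $\ell i_2(h(\xi))=\rho(\xi)$ also holds. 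Independence of the choice of $b_0$ and additivity in $\xi$ are immediate, since every map involved ($res_{|C|}$, $\delta^\circ$, $\ell i_2$, $\rho$) is a homomorphism.

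The first commutativity $\delta^\circ\circ h=res_{|C|}$ holds by the very definition of $h$. For the second, $h\circ\Delta=res_{|C|}$, I would test the characterising pair of equations on the candidate value $res_{|C|}(\zeta)\in B_{2}(k_2)^{\circ}$ for $\zeta=\{f\}_2\otimes g$. The commutative square of the global residue diagram in \textsection 3.4 gives $\delta^\circ(res_{|C|}(\zeta))=res_{|C|}(\Delta\zeta)$, which is the first equation for $h(\Delta\zeta)$. For the second, since $k$ is algebraically closed all residue fields equal $k$ and the traces are identities, so $\ell i_{2,|C|}\circ res_{|C|}=\ell i_2\circ res_{|C|}$; combined with Proposition \ref{regdilog} this yields $\rho(\Delta\zeta)=\ell i_2(res_{|C|}(\zeta))$, the second equation for $h(\Delta\zeta)$. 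By the uniqueness in the construction, $h(\Delta\zeta)=res_{|C|}(\zeta)$, as required.

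It remains to check $h(k_2^\times\wedge\Lambda^2 k(C_2,\mathcal{P}_2)^\times)=0$. For $\xi=r\wedge g\wedge h'$ with $r\in k_2^\times$, Proposition \ref{rho0} gives $\rho(\xi)=0$. For the residue, expanding termwise at each $c$ shows $res_c(r\wedge g\wedge h')=-r\wedge res_c(g\wedge h')$, so summing over $|C|$ and using bilinearity gives $res_{|C|}(\xi)=-r\wedge res_{|C|}(g\wedge h')=0$ by Proposition \ref{ressum2}. Hence $h(\xi)\in\ker\delta^\circ$ satisfies $\ell i_2(h(\xi))=0$, and the injectivity of $\ell i_2$ on $\ker\delta^\circ$ forces $h(\xi)=0$. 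The main obstacle in this argument is not any of these verifications but the structural isomorphism $\ell i_2:\ker\delta^\circ\xrightarrow{\sim}k$ imported from \cite{Unver}: it is what simultaneously supplies existence (surjectivity) and uniqueness (injectivity) of the homotopy, and without it one could only conclude from Proposition \ref{infsuslin} that $res_{|C|}$ has image in $\mathrm{im}(\delta^\circ)$, with no canonical additive splitting compatible with $\rho$.
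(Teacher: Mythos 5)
Your proposal is correct and follows essentially the same route as the paper: it builds $h$ from the two components $\rho$ (via the isomorphism $\ell i_2:\ker\delta^\circ\xrightarrow{\sim}k$ from \cite{Unver}) and $res_{|C|}$ (justified by Proposition \ref{infsuslin}), verifies the upper triangle with Proposition \ref{regdilog}, and kills $k_2^\times\wedge\Lambda^2$ with Propositions \ref{rho0} and \ref{ressum2}. The only difference is presentational --- you characterise $h$ implicitly by the pair of equations $\delta^\circ\circ h=res_{|C|}$, $\ell i_2\circ h=\rho$ instead of writing $h=(h_1,h_2)$ through the explicit direct-sum decomposition $B_2(k_2)^\circ\simeq\ker\delta^\circ\oplus\mathrm{Im}\,\delta^\circ$, which amounts to the same construction.
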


\begin{proof}
By the theorem in \cite{Unver}, $B_{2}(k_2)^{\circ} \simeq ker(\delta^{\circ}) \oplus Im (\delta ^{\circ}),$ and $ker(\delta ^{\circ}) \xrightarrow{\sim} k,$ such that the composition with the projection $B_{2}(k_2)^{\circ} \to ker(\delta ^{\circ}),$ is the same as the additive dilogarithm  $\ell i_{2}: B_{2}(k_2)^{\circ} \to  k.$  Suppose that we have such an $h$ as in the statement of the theorem. Let us denote the first component of $h $ by $h_{1}$  and the second component by $h_{2}.$ Then $h_{1}:  \Lambda^{3}k(C_2,\mathcal{P}_{2}) ^{\times} \to ker(\delta ^{\circ})$ and $h_{2}:  \Lambda^{3}k(C_2,\mathcal{P}_{2}) ^{\times} \to Im(\delta ^{\circ}).$  By the commutativity of the lower triangle we must have that $h_{2}=res_{|C|}.$ In order to be able to choose $h_{2}:= res_{|C|},$ we need to know that $Im(res_{|C|}) \subseteq Im (\delta^{\circ}). $ Since the cokernel of $\delta^{\circ}$ is $K_{2} ^{M} (k_2) ^{\circ},$ this is equivalent to Proposition  \ref{infsuslin}.

We know that by Proposition \ref{regdilog}, $ \ell i_{2,|C|} \circ res=\rho \circ \Delta. $ Clearly, from the definitions, we also have $ \ell i_{2,|C|} \circ res=\ell i _{2} \circ res_{|C|}.$ Therefore,  we have $\ell i _{2} \circ res_{|C|}=\rho \circ \Delta ^{\circ}.$ By the commutativity of the upper triangle, we see then that we must have $\ell i_{2}\circ (h_1\circ \Delta ) =\ell i _{2} \circ res_{|C|}=\rho \circ \Delta .$ Therefore, we choose $h_1:= \ell i_{2} ^{-1}\circ \rho, $ with $\ell i _{2} ^{-1}: k \to ker(\delta) ^{\circ}.$ These choices of $h_{1}$ and $h_2$ give us an $h$ that makes the diagram commutative. 

In order to show that $h(k_{2} ^{\times} \wedge \Lambda ^{2} k(C_2,\mathcal{P}_{2}) ^{\times})=0,$ we need to show that $h_1(k_{2} ^{\times} \wedge \Lambda ^{2} k(C_2,\mathcal{P}_{2}) ^{\times})=0,$ and $h_2(k_{2} ^{\times} \wedge \Lambda ^{2} k(C_2,\mathcal{P}_{2}) ^{\times})=0.$ The first statement follows from $\rho(k_{2} ^{\times} \wedge \Lambda ^{2} k(C_2,\mathcal{P}_{2}) ^{\times})=0,$ proved in Proposition \ref{rho0}. For the second statement we need to show that $res_{|C|}(k_{2} ^{\times} \wedge \Lambda ^{2} k(C_2,\mathcal{P}_{2}) ^{\times})=0.$ If $r \in k_{2} ^{\times }$ then $res_{c}(r\wedge f \wedge g)=-r\wedge res_{c}(f \wedge g).$ Therefore, it suffices to prove that $res_{|C|}: \Lambda^{2} k(C_2,\mathcal{P}_{2}) ^{\times} \to  k_{2} ^\times$ is 0. This is precisely Proposition   \ref{ressum2}.
\end{proof}
\subsection{Comparison with the additive dilogarithm} 
In this section, we no longer assume that $k$ is algebraically closed. We will show that when the infinitesimal regulator $\rho$ in this paper  is restricted to the case of $\mathbb{P}^{1} _{2}:=\mathbb{P}^{1} _{k^2}$ and the rational functions are restricted to those of  degree 1, then we obtain the additive dilogarithm $\ell i _2$ of \cite{Unver}. We first compute the regulator $\rho$ for the triple of rational functions on $\mathbb{P}^{1} _{2}$ that is related to the Totaro cycle. 
\begin{lemma}\label{totaro} For $a \in k_{2} ^{\flat},$ the triple of rational functions $(1-x,x,1-\frac{a}{x})$ on  the projective line $\mathbb{P}^{1} _{2}$ over $k_{2},$ with the parameter $x,$ are good with respect to a system of uniformizers $\mathcal{P}_{2}$ and  the regulator value on the  triple $(1-x)\wedge x \wedge (1-\frac{a}{x}) \in \Lambda^{3} k(\mathbb{P}^{1} _{2},\mathcal{P}_{2})^{\times}$  is given by 
$$
\rho((1-x)\wedge x \wedge (1-\frac{a}{x}))=\ell i_{2}(\{ a \}_2 ),
$$
where $\{ a \}_2$ is the element corresponding to $a \in k^{\flat}$ in the Bloch group $B_{2} (k_{2}).$ 
\end{lemma}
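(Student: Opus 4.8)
The plan is to compute $\rho$ on the triple $(1-x)\wedge x \wedge (1-\tfrac{a}{x})$ by directly unwinding the definition (\ref{regcurves}) of the regulator, exploiting the fact that on $\mathbb{P}^1_2$ there is a global lifting available (namely $\mathbb{P}^1_{k[[t]]}$) and that the functions are linear fractional transformations, so the combinatorics of their divisors on the special fiber $\mathbb{P}^1_k$ is completely explicit. First I would lift $a=s+ a_1 t$ to $\tilde a \in k[[t]]$ and take the tautological global lifting of the three functions, $\tilde p_\eta = (1-x,\,x,\,1-\tfrac{\tilde a}{x})$, as a triple of rational functions on $\tilde C := \mathbb{P}^1_{k[[t]]}$. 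Because this is a genuine global lifting, the defect form $\omega(\tilde p_\eta,\tilde q_c,\cdot)$ between the generic lifting and each good local lifting can be controlled, and in fact I expect the cleanest route is to observe that one may take the local good liftings $\tilde q_c$ to be the \emph{restrictions} of the global lifting wherever the global lifting is already good. Then by the residue property (P8) the $\omega$-contributions collapse and formula (\ref{simplerho})/Corollary \ref{explicitrho} reduces $\rho$ to a sum of purely local residue-of-$\ell$ terms $\sum_c \mathrm{Tr}_k\,\ell(res_{\tilde\pi_c}(\tilde p_\eta))$.

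Next I would identify exactly which closed points $c\in|\mathbb{P}^1_k|$ contribute, i.e. the support $|p|$ of the divisors of $1-x$, $x$, and $1-\tfrac{a}{x}$ on the special fiber. These are $x=0$, $x=1$, $x=\infty$, and $x=s$ (the reduction of $a$), each contributing a residue computation of the form $\ell(res_{\tilde\pi_c}(\tilde f_1\wedge \tilde f_2\wedge \tilde f_3))$, where $res$ is the Goncharov tame symbol (\ref{goncres}) landing in $\Lambda^2(k[[t]])^\times$ and $\ell$ is the map of (\ref{ellmap}) extracting the $[t^2\wedge t]$-coefficient after applying $\log^\circ$. At each of the four points I would compute the tame symbol explicitly — it picks out the two functions not vanishing/polar there, evaluated at the point — and then apply $\ell$ to the resulting wedge of two units in $k[[t]]$. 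The arithmetic here is routine given the formula $\ell i_2(\{s+a_1t\}_2) = -\tfrac{a_1^3}{2s^2(1-s)^2}$ recalled before this lemma, and the known identification $\ell i_2 = \ell\circ\delta$ on $B_2(k_2)$.

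The key conceptual step — and the one I would foreground — is recognizing that the four local residue contributions reassemble into $\delta(\{a\}_2) = (1-a)\wedge a$ followed by $\ell$. Concretely, the element $(1-x)\wedge x\wedge (1-\tfrac{a}{x})$ is, up to the relevant boundary, the image under $\Delta$ of the Totaro-type element $\{a\}_2$, and Proposition \ref{regdilog} already tells us $\rho\circ\Delta = \ell i_{2,|C|}\circ res$. So the honest shortcut is to write $(1-x)\wedge x \wedge (1-\tfrac{a}{x})$ in the form $\Delta(\{f\}_2\otimes g)$ for suitable $f,g$ whose residue at the single contributing point recovers $\{a\}_2 \in B_2(k(c)_2) = B_2(k_2)$, and then invoke Proposition \ref{regdilog} directly rather than recomputing residues by hand. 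I would check that $res_{|C|}\circ\Delta(\{x\}_2\otimes(1-\tfrac{a}{x}))$ concentrates at the point $x=\infty$ (or wherever the tame symbol of the $\otimes$-factor is nontrivial) and evaluates to $\{a\}_2$, so that $\ell i_{2,|C|}\circ res = \ell i_2(\{a\}_2)$.

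The \textbf{main obstacle} I anticipate is \emph{bookkeeping at the contributing points}: verifying goodness of the chosen liftings at $x=0,1,\infty,s$ simultaneously (the functions $x$ and $1-\tfrac{a}{x}$ interact at $x=0$ and $x=s$, and $1-x$ and $1-\tfrac{a}{x}$ can both degenerate), and making sure the tame-symbol residues at the \emph{several} boundary points cancel in pairs except for the one genuine $\{a\}_2$ contribution. In particular I expect a subtlety at $x=\infty$, where all three functions have nontrivial behavior, requiring a careful change of uniformizer to $1/x$ and tracking the sign coming from the Goncharov residue's anti-symmetry. Provided the residues at the spurious points cancel — which they must, by the reciprocity already embedded in the vanishing-of-residue-sum arguments used in Propositions \ref{infsuslin} and \ref{rho0} — the computation funnels the entire regulator value into $\ell i_2(\{a\}_2)$, giving the stated identity.
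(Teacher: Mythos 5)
Your proposal is correct and, once you arrive at your ``honest shortcut,'' it is exactly the paper's proof: verify goodness by choosing the uniformizers $x$, $x-1$, $x-a$, $1/x$ at $0$, $1$, $a$, $\infty$, write the triple as $\Delta(\{x\}_2\otimes(1-\frac{a}{x}))$, and invoke Proposition \ref{regdilog}. One small correction to your bookkeeping: the residue $res(\{x\}_2\otimes(1-\frac{a}{x}))$ concentrates at the point $x=a$ (where $1-\frac{a}{x}=\frac{x-a}{x}$ vanishes to order $1$ and $x$ is a unit with $x,1-x$ invertible, evaluating to $\{a\}_2$), not at $x=\infty$ where $1-\frac{a}{x}$ is a unit; the potential contributions at $x=0$ and $x=\infty$ die because $x$ fails to lie in $(\pazocal{O}^{\circ})^{\flat}$ there, so no pairwise cancellation among spurious points is actually needed.
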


\begin{proof}
The first statement follows by choosing the uniformizers $x, \, x-1, \, x-a, $ and $1/x$ at the points $0, \, 1, \, a,$ and  $\infty.$ For the second statement, note that $\Delta(\{ x\}_{2} \otimes (1-\frac{a}{x}))=(1-x) \wedge x \wedge (1-\frac{a}{x})$ and  by  Proposition \ref{regdilog}, $\rho(\Delta(\{ x\}_{2} \otimes (1-\frac{a}{x})))= \ell i_{2, |\mathbb{P}^1|} (res  (\{ x\}_{2} \otimes (1-\frac{a}{x})) )=\ell i_{2}(\{ a \}_2).$
\end{proof}

For $B=k_m$ or $A,$ we let $ \mathbb{P}^{n} _{B}=Proj (B[z_0,\cdots, z_{n}]).$    For $0 \leq i \leq n,$ let $H_{i}$ denote the hyperplane defined by $z_{i}=0.$ Let $z_{l} ^{q} (\mathbb{P}^{n} _{B}) $ denote the free abelian group generated by codimension $q$ linear subspaces of $\mathbb{P}^{n} _{B},$ which are in general position with respect to the  intersections $\cap _{i \in I}H_{i},$ for all $I \subseteq \{0,1, \cdots, n \}.$ We do {\it not} assume that the linear subspaces under consideration come from linear subspaces of $\mathbb{P}^{n} _{k}$ by pull-back. In other words, they are not necessarily constant on $B.$  For $0 \leq i \leq n,$ let  $\partial_{i} : z_{l} ^{q} (\mathbb{P}^{n} _{B}) \to z_{l} ^{q} (\mathbb{P}^{n-1} _{B}) $ denote the map induced by sending $L$ to $L\cap H_{i} .$ The map $\partial :=\sum _{0 \leq i \leq n} (-1) ^i \partial _i$ then defines a complex $z_{l} ^{q} (\mathbb{P}^{\bullet} _{B}) $.    We denote  $\mathbb{P}^{n} _{k_2}$ by $\mathbb{P}^{n} _{2}.$
 
\begin{definition}
Restricting our $\rho$ to linear objects  we obtain a map 
$$
\rho_l: z_{l} ^{2} (\mathbb{P}^{3} _{2}) \to k,
$$
as follows. Given a line $L_2 $ in $\mathbb{P}^{3} _{2},$ in general position with respect  to the intersections of the coordinate hyperplanes as above, then for a  system of uniformizers $\mathcal{P}_{2}$ on $L_2,$  $\frac{z_{1}}{z_{0}}\wedge \frac{z_{2}}{z_{0}}\wedge \frac{z_3}{z_0} \in \Lambda^{3} k(L_2,\mathcal{P}_2)^{\times}$ and we define $\rho_{l} (L_2):= \rho(\frac{z_{1}}{z_{0}}\wedge \frac{z_{2}}{z_{0}}\wedge \frac{z_3}{z_0}).$  
\end{definition}
We can give a more explicit description of the function $\rho_{l}$ above. We first define $\tilde{\rho}_{l}: z_{l} ^2(\mathbb{P}^{3} _{A}) \to k$ as follows. 
Suppose that $\tilde{L}$ is a line in $z_{l} ^2(\mathbb{P}^{3} _{A}),$ then there is a system of uniformizers $\mathcal{P}_{A}$ on $\tilde{L}$ such that $\frac{z_{1}}{z_{0}}\wedge \frac{z_{2}}{z_{0}}\wedge \frac{z_3}{z_0} \in \Lambda^{3} k(\tilde{L},\mathcal{P}_A)^{\times}.$ Let $L$ denote the reduction of $\tilde{L}$ modulo $(t).$ Then the value of $\tilde{\rho}_l$ on $\tilde{L}$ is  defined by 
\begin{eqnarray}\label{linrho}
\tilde{\rho}_{l}(\tilde{L}):=\sum _{c \in |L|}{\rm Tr}_{k} (\ell (res_{c} \frac{z_{1}}{z_{0}}\wedge \frac{z_{2}}{z_{0}}\wedge \frac{z_{3}}{z_{0}})  )=\sum _{c \in   |L| \cap |H|} \ell (res_{c} \frac{z_{1}}{z_{0}}\wedge \frac{z_{2}}{z_{0}}\wedge \frac{z_{3}}{z_{0}}  ),
\end{eqnarray}
where $|H|:=\cup_{0 \leq i \leq 3} |H_i|.$ The last equality follows because the singularities of the above triple of functions lie only on $|L| \cap |H|$ and that the intersections $\tilde{L} \cap H_{i},$ being the intersections of linear subspaces, are $A$-rational and hence the values of the function $\ell $ in the summands are already in $k$ and there is no need to take trace.  

\begin{proposition}
The natural reduction mod $(t^2)$ map from $z_{l} ^2(\mathbb{P}^{3} _{A})$ to $z_{l} ^2(\mathbb{P}^{3} _{2})$ is surjective and the map $\tilde{\rho}_l$ factors through this map to give $\rho_l.$ 
\end{proposition}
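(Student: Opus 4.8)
The plan is to treat the two assertions separately, beginning with surjectivity. A line $L_2 \in z_{l}^{2}(\mathbb{P}^{3}_{2})$ is determined by a $2\times 4$ matrix over $k_2$ whose rows span it (equivalently by its Pl\"ucker coordinates), and general position with respect to the various $\cap_{i\in I}H_i$ is the requirement that a fixed finite collection of polynomial expressions in the entries (minors, Pl\"ucker coordinates, and the relevant incidence determinants) be units in $k_2$. First I would lift the entries of such a defining matrix arbitrarily from $k_2$ to $A=k[[t]]$, which is possible since $A \to k_2$ is surjective. Each expression occurring in the general-position conditions then reduces modulo $t$ to the corresponding expression for $L$ over $k$, which is nonzero because general position over $k_2$ descends to general position over $k$. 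An element of $A$ with nonzero constant term is a unit, so every such expression is automatically a unit in $A$; hence the lifted matrix has rank $2$ and defines a line $\tilde{L}\in z_{l}^{2}(\mathbb{P}^{3}_{A})$ in general position reducing to $L_2$. This proves surjectivity, and shows moreover that \emph{any} lift of the defining data works.

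For the factoring it suffices, by linearity, to check on a single generator, i.e. to show that for a line $\tilde{L}\in z_{l}^{2}(\mathbb{P}^{3}_{A})$ with reduction $L_2$ one has $\tilde{\rho}_{l}(\tilde{L})=\rho_{l}(L_2)$; since the right-hand side depends only on $L_2$, this simultaneously yields well-definedness of the induced map and its identification with $\rho_l$. The key observation is that $\tilde{L}\cong \mathbb{P}^{1}_{A}$ is a global smooth projective lifting of the curve $L_2 \cong \mathbb{P}^{1}_{k_2}$, and that the coordinate ratios $z_1/z_0, z_2/z_0, z_3/z_0$ restricted to $\tilde{L}$ are good with respect to a system of uniformizers on $\tilde{L}$ lifting the chosen one on $L_2$ (this goodness is exactly what the general-position hypothesis guarantees, and is built into the definition (\ref{linrho}) of $\tilde{\rho}_l$). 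I would therefore compute $\rho_{l}(L_2)=\rho(z_1/z_0 \wedge z_2/z_0 \wedge z_3/z_0)$ via the formula (\ref{regcurves}), choosing the generic lifting and \emph{all} the good local liftings $\tilde{q}_c$ to be restrictions of the single global lift $\tilde{p}:=(z_1/z_0,z_2/z_0,z_3/z_0)|_{\tilde{L}}$. With this uniform choice the comparison isomorphisms $\tilde{\gamma}_{c,\eta}^{-1}\circ\alpha_c$ are the canonical identifications coming from $\tilde{L}$, so the generic and local data literally coincide and each correction term $\omega(\tilde{p}_\eta,\tilde{q}_c,\mathrm{id})$ vanishes directly from the definition of $\Omega$ (its two arguments agree). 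The formula (\ref{regcurves}) then collapses to $\sum_{c}\mathrm{Tr}_k\,\ell(\mathrm{res}_c\,\tilde{p})$, which is precisely $\tilde{\rho}_l(\tilde{L})$ by (\ref{linrho}); this is the concrete instance of the ideal formula (\ref{simplerho}) from the introduction.

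Finally, Proposition \ref{indeplocal} and Proposition \ref{indepgen} guarantee that $\rho$, and hence $\rho_{l}(L_2)$, is independent of the generic lifting and of the good local liftings used to evaluate it. Combined with the computation of the previous paragraph, this gives $\tilde{\rho}_{l}(\tilde{L})=\rho_{l}(L_2)$ for every lift $\tilde{L}$ of $L_2$, which is exactly the asserted factoring. The main obstacle I anticipate is the careful bookkeeping needed to justify that a single global lift $\tilde{L}$ may simultaneously serve as the generic lifting and as the good local lifting at every closed point $c$, and that under this choice the relevant comparison isomorphisms are the identity so that $\omega$ vanishes; once this compatibility is set up, the residue property (P8) together with the vanishing of $\omega$ make the reduction to (\ref{linrho}) immediate, while surjectivity and the independence inputs are comparatively routine.
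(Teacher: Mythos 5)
Your proposal is correct and follows essentially the same route as the paper: lift the linear data arbitrarily (general position being an open condition checked modulo $t$), then use $\tilde{L}$ with the coordinate ratios as a global good lifting so that $\rho(L_2)$ collapses to the sum of local residue contributions $\sum_c {\rm Tr}_k\,\ell(res_c\,\tilde{p})=\tilde{\rho}_l(\tilde{L})$. The only difference is that you re-derive this collapse directly from the definition (\ref{regcurves}) by taking the generic and all local liftings to be restrictions of the one global lift, whereas the paper simply invokes Corollary \ref{explicitrho} with a one-set cover, which encapsulates exactly that computation.
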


\begin{proof} 
Given a line in $L_2$ in $\mathbb{P}^{3} _{2},$ which is in general position with respect to the intersections of the coordinate hyperplanes.  Let $\tilde{L} $ be a linear subspace in $\mathbb{P}^{3} _{A}$ which reduces to $L$ modulo $(t^2).$ Then $\tilde{L} \in z_{l} ^2(\mathbb{P}^{3} _{A}).$ This proves the first statement.   For the second statement we need to compute $\rho_l(L)=\rho(\frac{z_{1}}{z_{0}}\wedge \frac{z_{2}}{z_{0}}\wedge \frac{z_{3}}{z_{0}}).$ But $\tilde{L}$ together with the triple of rational  functions $\frac{z_{1}}{z_{0}}\wedge \frac{z_{2}}{z_{0}}\wedge \frac{z_{3}}{z_{0}}$ on $\tilde{L}$ gives a good global lifting. Therefore when computing  $\rho$ on $L_{2},$ we can use Corollary \ref{explicitrho} with $i=1.$ This proves that $\rho_{l}(L_{2})=\tilde{\rho}_{l}(\tilde{L})$ as desired. 
\end{proof}

\begin{proposition}
The regulator  $\rho_l: z_{l} ^{2} (\mathbb{P}^{3} _{2}) \to k$ factors through the quotient $z_{l} ^{2} (\mathbb{P}^{3} _{2})/\partial(z_{l} ^{2} (\mathbb{P}^{4} _{2})) .$
\end{proposition}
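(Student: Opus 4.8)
The plan is to reduce the statement to the corresponding vanishing over $A=k[[t]]$ and then to recognize the alternating sum over the faces as a complete set of iterated residues of a single weight-four symbol on a surface, which vanishes by reciprocity.

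First I would fix a plane $M_2 \in z_l^2(\mathbb{P}^3_2)$, more precisely an element of $z_l^2(\mathbb{P}^4_2)$ in general position, and choose, as in the surjectivity argument of the preceding proposition, a lift $\tilde M \in z_l^2(\mathbb{P}^4_A)$ reducing to it modulo $(t^2)$; such a lift exists and stays in general position because the conditions defining $z_l^2$ are open. Since intersecting with the coordinate hyperplane $H_i$ commutes with reduction modulo $(t^2)$, the face $\partial_i M_2$ is the reduction of $\partial_i \tilde M = \tilde M \cap H_i$, and the preceding proposition gives $\rho_l(\partial_i M_2) = \tilde\rho_l(\tilde M \cap H_i)$. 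Hence $\rho_l(\partial M_2) = \tilde\rho_l(\partial \tilde M)$, and it suffices to prove $\tilde\rho_l \circ \partial = 0$ on $z_l^2(\mathbb{P}^4_A)$, where everything is now expressed through the explicit residue formula (\ref{linrho}).

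Next I would set $F := \frac{z_1}{z_0} \wedge \frac{z_2}{z_0} \wedge \frac{z_3}{z_0} \wedge \frac{z_4}{z_0} \in \Lambda^4 k(\tilde M)^{\times}$, denote by $\bar M \cong \mathbb{P}^2$ the reduction of $\tilde M$ and by $D_i := \bar M \cap H_i$ the five lines it cuts on the coordinate hyperplanes; by general position these are distinct lines meeting pairwise transversally, so the relevant liftings are good and (P8) applies. The key identification is that, up to the sign $(-1)^i$ coming from the position of the omitted factor, the three-fold symbol used to define $\tilde\rho_l(\tilde M \cap H_i)$ is exactly $res_{D_i}(F)$, the tame residue of $F$ along $D_i$: for $i \geq 1$ the function $z_i/z_0$ is a local equation of $D_i$ and $res_{D_i}(F) = \pm\, \bigwedge_{j \neq i}(z_j/z_0)|_{D_i}$, while for $i=0$ one factors each $z_j/z_0 = (z_j/z_1)(z_1/z_0)$ and extracts the common pole $z_1/z_0$ to obtain $res_{D_0}(F) = \pm\, \frac{z_2}{z_1} \wedge \frac{z_3}{z_1} \wedge \frac{z_4}{z_1}|_{D_0}$, which matches the coordinate convention for $\tilde\rho_l$ on $H_0$. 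Consequently $\tilde\rho_l(\partial \tilde M) = \pm \sum_i \sum_{c \in |D_i|} \ell\big(res_c\, res_{D_i}(F)\big)$, the full alternating sum of iterated residues of the single symbol $F$ taken over all flags $c \in D_i$ on the surface $\bar M$.

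Finally I would prove that this total iterated residue vanishes. As in the proofs of Proposition \ref{infsuslin} and Proposition \ref{rho0}, the composite $\ell \circ res_c$ can be rewritten as $res_c$ of an explicit rational $1$-form obtained by applying $res_t \frac{1}{t}(\cdot)$ to the $d\log$'s of good local liftings; carrying this out uniformly in $c$ and $i$ exhibits $\sum_{c \in |D_i|} \ell(res_c\, res_{D_i}(F))$ as the sum of residues along $D_i$ of a single rational $2$-form $\Theta$ on $\bar M$, built from $res_t \frac{1}{t}\, d\log$ of the four coordinate ratios and independent of $i$. The desired vanishing is then the statement that the sum of the iterated residues of $\Theta$ over all flags on the complete surface $\bar M$ is zero, which follows from the residue theorem on curves applied twice (equivalently, Parshin reciprocity on $\bar M$): summing first over the points of each face curve and then over the faces, every contribution at a crossing point $H_i \cap H_j$ occurs twice with opposite sign and every residual one-variable sum vanishes on $\mathbb{P}^1$ by the residue theorem. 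The main obstacle is precisely this last step: unlike the earlier reciprocity results in the paper, which all reduce to the vanishing of the residues of a $1$-form on a single curve, here one must organize the $\ell$-decorated residues into a genuine two-dimensional reciprocity law on the surface and verify that the coordinate relabelings together with the signs $(-1)^i$ in the face maps assemble correctly, so that the crossing-point contributions cancel and no boundary terms survive.
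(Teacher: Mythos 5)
Your first reduction (lift a plane from $z_l^2(\mathbb{P}^3_2)$ to $z_l^2(\mathbb{P}^4_A)$ and work with $\tilde\rho_l$ over $A$) agrees with the paper, but the rest of your argument has a genuine gap at exactly the step you flag as the ``main obstacle,'' and that step cannot be repaired in the form you propose. You claim that, after rewriting $\ell\circ res_c$ via residues of $d\log$'s, the sum $\sum_{c\in |D_i|}\ell\bigl(res_c\, res_{D_i}(F)\bigr)$ becomes the sum over $c\in |D_i|$ of the residues of a \emph{single} rational $1$-form $res_{D_i}\Theta$ on the complete curve $D_i\cong\mathbb{P}^1$. If that were true, each such sum would already vanish by the one-variable residue theorem, and the same reasoning applied to a line instead of a plane would force $\tilde\rho_l(\tilde L)=0$ for every $\tilde L$ --- contradicting Lemma \ref{totaro} and Theorem \ref{compunver}, which show $\rho$ is nontrivial on lines. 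The obstruction is that $\ell$ is built from $\log^{\circ}$, which is normalized separately at each closed point (it is the ``choice of branch'' that the whole construction of $\omega$ is designed to control); consequently $\ell\circ res_c$ is a quadratic expression in local residue data at $c$ and is \emph{not} the residue at $c$ of a globally defined form. In Proposition \ref{rho0} only the $d\log$-type pieces (the $[t]$-coefficients) are handled by a global residue theorem; the $[t^2]$-pieces are cancelled against $\omega$-terms, not summed to zero. So the two-dimensional reciprocity you invoke is not available for these $\ell$-decorated residues.

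The paper's proof is entirely different and much more elementary: by (\ref{linrho}) and the $A$-rationality of the intersections $\tilde L\cap H_i$, the quantity $\tilde\rho_l(\tilde L)$ depends on $\tilde L$ only through the zero-cycle $\partial(\tilde L)$, via a single function $f$ on the $A$-points of the coordinate hyperplanes (away from their pairwise intersections) given by $f([a_0,\dots,0,\dots,a_2])=\ell(\frac{a_1}{a_0}\wedge\frac{a_2}{a_0})$; one checks $\tilde\rho_l(\tilde L)=-f(\partial(\tilde L))$, and then $\tilde\rho_l(\partial(\tilde P))=-f(\partial^2(\tilde P))=0$ because $\partial^2=0$. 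No reciprocity law is needed at all --- the cancellation you were trying to engineer on the surface is performed formally by $\partial^2=0$ at the level of zero-cycles. I would suggest replacing your final step by this observation; your identification of the face contributions with residues of the weight-four symbol $F$ is then unnecessary.
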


\begin{proof}
By the previous proposition and the fact that the map $z_{l} ^{2}(\mathbb{P}^{4} _{A}) \to z_{l} ^{2}(\mathbb{P}^{4} _{2})$ is a surjection, it suffices to show that $\tilde{\rho}_{l}$ factors through $z_{l} ^{2} (\mathbb{P}^{3} _{A})/\partial(z_{l} ^{2} (\mathbb{P}^{4} _{A})) .$ By the definition (\ref{linrho}) of $\tilde{\rho}_l,$ we see that 
$$
\tilde{\rho}_{l}(\tilde{L})=\ell\big(\big(\frac{z_1}{z_0}\wedge \frac{z_2}{z_0} \big)\Big|_{\tilde{L} \cap H_{3}}\big)-\ell\big(\big(\frac{z_1}{z_0}\wedge \frac{z_3}{z_0} \big)\Big|_{\tilde{L} \cap H_{2}}\big)+\ell\big(\big(\frac{z_2}{z_0}\wedge \frac{z_3}{z_0} \big)\Big|_{\tilde{L} \cap H_{1}}\big)-\ell\big(\big(\frac{z_2}{z_1}\wedge \frac{z_3}{z_1} \big)\Big|_{\tilde{L} \cap H_{0}}\big).
$$
For a scheme $X/A,$ let $X(A)$ denote the set of $A$-valued points of $X.$  Note that we can express $\tilde{\rho}_{l}$ in terms of the map 
$$
f: \mathcal{H}:=\cup_{0 \leq i \leq 3} H_{i} (A) \setminus \cup _{0 \leq i <j \leq 3} (H_{i}\cap H_{j})(A) \to k
 $$
 defined by $f([a_{0},\cdots,0 , \cdots, a_{2}]):=\ell (\frac{a_1}{a_0}\wedge \frac{a_2}{a_0}),$ as follows. First, we extend $f$ linearly to a map from the free abelian group $\mathbb{Z}[\mathcal{H}]$ on $\mathcal{H}.$   Then we note that $\partial (\tilde{L}) \in \mathbb{Z}[\mathcal{H}]$ and 
 $$
 \tilde{\rho}_l(\tilde{L})=-f(\partial(\tilde{L})).
  $$
  The statement that $ \tilde{\rho}_l(\partial (\tilde{P}))=0,$ then follows from $\partial ^{2}=0.$  
\end{proof}

Let $\tilde{C}_{m}(\mathbb{P}^1 _B)$ denote the free abelian group generated by an $m$-tuple of  distinct points in $\mathbb{P}^1 _B (B),$ and  $C_{m}(\mathbb{P}^1 _B)$ be the co-invariants of this group with respect to the natural action of $PGL(2,B).$ Let $d_{i}: \tilde{C}_{m}(\mathbb{P}^1 _B) \to \tilde{C}_{m-1}(\mathbb{P}^1 _B),$ be the map that sends $(c_{0},\cdots ,c_{i},\cdots,c_{m-1})$ to $(c_{0},\cdots ,\hat{c}_{i},\cdots,c_{m-1}),$ and $d:=\sum _{0\leq i \leq m-1}(-1) ^i d_i.$ This defines a complex $(\tilde{C}_{\bullet}(\mathbb{P}^1 _B),d),$ which  descends to a complex $(C_{\bullet}(\mathbb{P}^1 _B),d).$

There is a commutative diagram 
\[
\begin{CD}
z_{l} ^{2}(\mathbb{P}^{4} _{B})@>{\partial}>>z_{l} ^{2}(\mathbb{P}^{3} _{B})\\
@V{f_5}VV  @V{f_4}VV\\
C_{5}(\mathbb{P}^1 _B)@>{d}>> C_{4}(\mathbb{P}^1 _B)
\end{CD}
\]
with surjective vertical maps $f_{4}$ and $f_{5}.$ The map $f_{4}$  is given by sending $\tilde{L}$ to the $4$-tuple of points $(\gamma(\tilde{L} \cap H_{0}),\gamma(\tilde{L} \cap H_{1}),\gamma(\tilde{L} \cap H_{2}),\gamma(\tilde{L} \cap H_{4}))$  on $\mathbb{P}^{1} _{B},$ where $\gamma: \tilde{L} \xrightarrow{\sim } \mathbb{P}^{1} _{B}$  is any isomorphism of $\tilde{L}$ with $\mathbb{P}^{1} _{B}.$ Since we are taking coinvariants with respect to the action of the projective general linear group, this is independent of $\gamma.$ 
Similarly, $f_{5}$ is defined by sending $\tilde{P}$ to $(\gamma(\tilde{L} \cap H_{0}),\gamma(\tilde{L} \cap H_{1}),\gamma(\tilde{L} \cap H_{2}),\gamma(\tilde{L} \cap H_{4}),\gamma(\tilde{L} \cap H_{4}) ),$ where $\tilde{L}$ is any  line in $\tilde{P}$ in general position with respect to the lines $\{ \tilde{P}\cap H_{i} \} _{0\leq i \leq 4} $  in $\tilde{P}$ and  $\gamma: \tilde{L} \xrightarrow{\sim } \mathbb{P}^{1} _{B}$  is any isomorphism of $\tilde{L}$ with $\mathbb{P}^{1} _{B}.$ The above commutative diagram  induces the following surjective map 
\begin{eqnarray}\label{defng}
g: z_{l} ^{2} (\mathbb{P}^{3} _{2})/\partial(z_{l} ^{2} (\mathbb{P}^{4} _{2})) \to C_{4}(\mathbb{P}^1 _2)/d(C_{5}(\mathbb{P}^1 _2)). 
\end{eqnarray}

\begin{proposition}
The linear regulator map $\rho_{l}: z_{l} ^{2} (\mathbb{P}^{3} _{2})/\partial(z_{l} ^{2} (\mathbb{P}^{4} _{2})) \to k,$ factors through the surjection $g$ in (\ref{defng}) and induces a map $\rho_{c}: C_{4}(\mathbb{P}^1 _2)/d(C_{5}(\mathbb{P}^1 _2)) \to k.$ 
\end{proposition}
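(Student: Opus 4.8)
The plan is to build $\rho_c$ by showing that $\rho_l$ of a line depends only on the $PGL(2,k_2)$-class of that line's intersection $4$-tuple with the coordinate hyperplanes, and then to pass to the quotient by $d(C_5(\mathbb{P}^1 _2))$. Since $f_4$ and $g$ are surjective, it suffices to produce a map $\rho_c^{0}:C_4(\mathbb{P}^1 _2)\to k$ with $\rho_c^{0}\circ f_4=\rho_l$ and to check that $\rho_c^{0}$ vanishes on $d(C_5(\mathbb{P}^1 _2))$.

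First I would prove the key invariance statement: for $L_2\in z_l^{2}(\mathbb{P}^3 _2)$, the value $\rho_l(L_2)=\rho(\frac{z_1}{z_0}\wedge\frac{z_2}{z_0}\wedge\frac{z_3}{z_0})$ depends only on $f_4(L_2)$. Fixing an isomorphism $\gamma:L_2\xrightarrow{\sim}\mathbb{P}^1 _2$, functoriality of the construction of $\rho$ (which follows from the functoriality property (P3) of $\omega$ together with the independence of all choices established in Proposition \ref{indepgen}) gives $\rho_l(L_2)=\rho(\gamma_*\frac{z_1}{z_0}\wedge\gamma_*\frac{z_2}{z_0}\wedge\gamma_*\frac{z_3}{z_0})$, where $\gamma_*\frac{z_i}{z_0}$ is a rational function on $\mathbb{P}^1 _2$ with divisor $(\gamma(L_2\cap H_i))-(\gamma(L_2\cap H_0))$. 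Such a function is determined by its divisor up to a scalar in $k_2^{\times}$, and by multilinearity together with Proposition \ref{rho0} (which kills $k_2^{\times}\wedge\Lambda^{2}$) the value of $\rho$ is unchanged by rescaling each factor. Hence $\rho_l(L_2)$ depends only on the ordered $4$-tuple $(\gamma(L_2\cap H_0),\dots,\gamma(L_2\cap H_3))$ on $\mathbb{P}^1 _2$; applying functoriality once more to an element of $PGL(2,k_2)$ shows it depends only on the $PGL(2,k_2)$-orbit of this tuple, that is, only on $f_4(L_2)$.

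Granting this, since $f_4$ is surjective the rule $f_4(L_2)\mapsto\rho_l(L_2)$ is a well-defined function on the generators of $C_4(\mathbb{P}^1 _2)$, and extending linearly yields $\rho_c^{0}:C_4(\mathbb{P}^1 _2)\to k$ with $\rho_c^{0}\circ f_4=\rho_l$ on $z_l^{2}(\mathbb{P}^3 _2)$. To see that $\rho_c^{0}$ descends modulo $d(C_5(\mathbb{P}^1 _2))$, take $\eta\in C_5(\mathbb{P}^1 _2)$; by surjectivity of $f_5$ write $\eta=f_5(\tilde P)$, and then the commutativity $f_4\circ\partial=d\circ f_5$ of the diagram gives $d\eta=f_4(\partial\tilde P)$. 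Therefore $\rho_c^{0}(d\eta)=\rho_l(\partial\tilde P)=0$, the last equality being the statement (proved in the preceding proposition) that $\rho_l$ factors through $z_l^{2}(\mathbb{P}^3 _2)/\partial(z_l^{2}(\mathbb{P}^4 _2))$. Thus $\rho_c^{0}$ induces $\rho_c:C_4(\mathbb{P}^1 _2)/d(C_5(\mathbb{P}^1 _2))\to k$, and unwinding the definitions gives $\rho_c\circ g=\rho_l$ on $z_l^{2}(\mathbb{P}^3 _2)/\partial(z_l^{2}(\mathbb{P}^4 _2))$, with $\rho_c$ unique because $g$ is surjective.

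The main obstacle is the invariance statement of the second paragraph, and specifically the $PGL(2,k_2)$-equivariance of $\rho$. Over the dual numbers the automorphism matching the two configurations has a nontrivial infinitesimal part, so one must verify that the entire construction of $\rho$ — the generic and local liftings, the form $\omega$, and the residue and $\ell$ contributions — transports correctly under such an automorphism; this is precisely what (P3) and the choice-independence of Proposition \ref{indepgen} are designed to provide, and it is the point that genuinely uses that we work with curves rather than higher-dimensional objects. The scalar ambiguity is by contrast harmless, being absorbed by Proposition \ref{rho0}.
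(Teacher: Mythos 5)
Your proposal is correct and follows essentially the same route as the paper: reduce to showing $\rho_l$ is constant on fibers of $f_4$ by transporting the triple of coordinate functions along the isomorphism matching the two configurations, observe that the transported functions agree with the original ones up to scalars in the units of the base because they have the same divisors on a line, and absorb those scalars via Proposition \ref{rho0}; the descent modulo $d(C_5(\mathbb{P}^1_2))$ then follows from the surjectivity of $f_5$, the commutativity $f_4\circ\partial=d\circ f_5$, and the preceding proposition. The only cosmetic difference is that the paper runs the fiber-invariance argument on lifted lines over $A=k[[t]]$ and you run it directly over $k_2$, and you spell out the $d(C_5)$ step which the paper leaves implicit.
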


\begin{proof}
In oder to prove the statement we need to prove that if $\tilde{L}$ and $\tilde{L} ' $ are in $z_{l} ^{2} (\mathbb{P}^{3} _{A})$ with the property that $f_4(\tilde{L})=f_4(\tilde{L} ')$ then $\rho_l (\tilde{L}_2)=\rho_{l}(\tilde{L} ' _{2}),$ where $\tilde{L}_{2}:=\tilde{L}|_{t^2}$ and $\tilde{L}_{2} '=\tilde{L}'|_{t^2}.$  The assumption  $f_4(\tilde{L})=f_4(\tilde{L} ')$ implies that there is an isomorphism 
$\alpha: \tilde{L}\xrightarrow{\sim} \tilde{L} '$ such that $\alpha(\tilde{L} \cap H_{i})=\tilde{L} ' \cap H_{i},$ for all $0 \leq i \leq 3.$ By the definition of $\rho_{l},$ we need to prove that 
\begin{eqnarray}\label{linearrhoeq}
\rho(\frac{z_1}{z_0}\wedge \frac{z_2}{z_0} \wedge \frac{z_3}{z_0} \big|_{\tilde{L}_2})=\rho(\frac{z_1}{z_0}\wedge \frac{z_2}{z_0} \wedge \frac{z_3}{z_0} \big|_{\tilde{L}_{2} '}),
\end{eqnarray} 
where our notation is for emphasizing that on the left hand side inside the parenthesis we consider triples of rational  functions on $\tilde{L}_2$ whereas on the right hand side it is on $\tilde{L}_2 '.$ 

By the functoriality of $\rho$ we have  $\rho(\frac{z_1}{z_0}\wedge \frac{z_2}{z_0} \wedge \frac{z_3}{z_0} \big|_{\tilde{L}_{2} '})=\rho(\alpha^*(\frac{z_1}{z_0}\wedge \frac{z_2}{z_0} \wedge \frac{z_3}{z_0} \big|_{\tilde{L}_{2} '})).$ On the other hand, for each $1 \leq i \leq 3,$ $\alpha^*(\frac{z_{i}}{z_0}\big|_{\tilde{L} '})$ has the same zeroes and poles as $\frac{z_{i}}{z_0}\big|_{\tilde{L} }.$ This implies that there exist $\lambda _{i} \in A^{\times}$ such that 
$\alpha^*(\frac{z_{i}}{z_0}\big|_{\tilde{L} '})=\lambda_i \cdot \frac{z_{i}}{z_0}\big|_{\tilde{L} },$ for all $1 \leq i \leq 3.$  Combining with the above, this implies that 
$$
\rho(\frac{z_1}{z_0}\wedge \frac{z_2}{z_0} \wedge \frac{z_3}{z_0} \big|_{\tilde{L}_{2} '})=\rho ( (\lambda_1 \frac{z_{1}}{z_0} \wedge \lambda_2 \frac{z_{2}}{z_0} \wedge \lambda_3 \frac{z_{3}}{z_0} ) \big|_{\tilde{L}_{2}}).
$$
By Proposition \ref{rho0}, we see that the right hand side is equal to $\rho ( ( \frac{z_{1}}{z_0} \wedge  \frac{z_{2}}{z_0} \wedge  \frac{z_{3}}{z_0} ) \big|_{\tilde{L}_{2}}).$ This implies (\ref{linearrhoeq}) and hence finishes the proof of the proposition. 
 \end{proof}

By Remark 3.8.2 in \cite{Unver}, there is a map from $B_{2}(k_{2})$ to $C_{4}(\mathbb{P}^1 _2)/d(C_{5}(\mathbb{P}^1 _2))$ which sends $\{ a \}_{2}$ to $(0,a,1,\infty)$ and this map is an isomorphism, when the source and the target are tensored with $\mathbb{Q}.$  Let us continue to denote the map from $B_{2}(k_{2}) \to k,$ which corresponds to $\rho_{c}$ via this isomorphism, by the same symbol. Then the following theorem proves that $\rho$ restricted to the case of the projective line and linear fractional transformations is essentially the same as the additive dilogarithm of \cite{Unver}.
 
 \begin{theorem}\label{compunver}
The map $\rho_{c}: B_{2}(k_{2}) \to k$ which is  induced by restriction of  the  regulator $\rho$ to the projective line, is given by 
$$
\rho_{c}(\{ a\}_{2})=-\ell i_{2}(\{ a \}_2),
$$
where $\ell i_{2} (\{ a_{0} +a_{1}t \} _2)=-\frac{a_{1} ^{3}}{2a_{0} ^2(1-a_{0})^2}.$ 
 \end{theorem}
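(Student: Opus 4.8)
The plan is to transport the statement back through the chain of maps relating $B_{2}(k_2)$, the coinvariants $C_{4}(\mathbb{P}^1 _2)/d(C_{5}(\mathbb{P}^1 _2))$ and the linear cycle groups, and then to recognise the resulting triple of coordinate functions as a normalization of the Totaro triple of Lemma \ref{totaro}. Concretely, under the identification of the statement $\rho_{c}(\{a\}_{2})$ is the value of $\rho_{c}$ on the class of the $4$-tuple $(0,a,1,\infty)$; since $\rho_{l}$ factors through the surjection $g$ of (\ref{defng}), this equals $\rho_{l}(\tilde{L})$ for any line $\tilde{L}\subseteq \mathbb{P}^3 _2$ admitting an isomorphism $\gamma:\tilde{L}\xrightarrow{\sim}\mathbb{P}^1 _2$ carrying the intersection points $\tilde{L}\cap H_{0},\tilde{L}\cap H_{1},\tilde{L}\cap H_{2},\tilde{L}\cap H_{3}$ to $0,a,1,\infty$ respectively.

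Next I would use the functoriality of $\rho$ under $\gamma$ to rewrite $\rho_{l}(\tilde{L})=\rho(\frac{z_1}{z_0}\wedge\frac{z_2}{z_0}\wedge\frac{z_3}{z_0})$ as the value of $\rho$ on the triple of functions transported to $\mathbb{P}^1 _2$. Each transported function is a degree-one rational function on $\mathbb{P}^1 _2$ whose divisor has a zero at $c_{i}$ and a pole at $c_{0}=0$; hence it differs from the normalized function $v_{1}=1-\frac{a}{x}$, $v_{2}=1-\frac{1}{x}$, $v_{3}=\frac{1}{x}$ only by a factor in $k_2 ^{\times}$. Because $\rho$ annihilates $k_{2}^{\times}\wedge\Lambda^{2}k(\mathbb{P}^1 _2,\mathcal{P}_{2})^{\times}$ by Proposition \ref{rho0}, these unit factors are irrelevant, and I obtain $\rho_{c}(\{a\}_{2})=\rho((1-\frac{a}{x})\wedge(1-\frac{1}{x})\wedge\frac{1}{x})$.

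It then remains a purely formal manipulation inside $\Lambda^{3}k(\mathbb{P}^1 _2,\mathcal{P}_{2})^{\times}$. Writing $1-\frac{1}{x}=(-1)(1-x)x^{-1}$ and $\frac{1}{x}=x^{-1}$ and expanding by multilinearity, the summand containing $x^{-1}\wedge x^{-1}$ vanishes, while the summand carrying the constant factor $-1\in k_2 ^{\times}$ is killed again by Proposition \ref{rho0}; what survives is $\rho((1-\frac{a}{x})\wedge(1-x)\wedge x^{-1})$. Replacing $x^{-1}$ by $x$ introduces one sign, and a cyclic (hence even) permutation of the three factors identifies the remaining wedge with the Totaro triple, giving $\rho_{c}(\{a\}_{2})=-\rho((1-x)\wedge x\wedge(1-\frac{a}{x}))=-\ell i_{2}(\{a\}_{2})$ by Lemma \ref{totaro}.

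The whole argument is formal once Proposition \ref{rho0} is available; the only genuine care lies in the bookkeeping of the normalization and in tracking the single sign, which is precisely the discrepancy recorded in the statement. The step that deserves the most attention is verifying that the $4$-tuple $(0,a,1,\infty)$ attached to $\{a\}_{2}$ matches the divisor data of the normalized coordinate functions $v_{1},v_{2},v_{3}$, and that the passage from this triple to the Totaro triple loses exactly one sign; the remaining equality $\ell i_{2}(\{a_{0}+a_{1}t\}_{2})=-\frac{a_{1}^{3}}{2a_{0}^{2}(1-a_{0})^{2}}$ is the formula already recorded for $\ell i_{2}$.
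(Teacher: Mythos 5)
Your argument is correct and follows the same route as the paper: identify $\rho_{c}(\{a\}_{2})$ with $\rho_{l}$ of a line whose coordinate hyperplane intersections map to $(0,a,1,\infty)$, recognise the transported functions as $(1-\tfrac{a}{x})\wedge(1-\tfrac{1}{x})\wedge\tfrac{1}{x}$ up to units killed by Proposition \ref{rho0}, and reduce by multilinearity and antisymmetry to $-\rho((1-x)\wedge x\wedge(1-\tfrac{a}{x}))=-\ell i_{2}(\{a\}_{2})$ via Lemma \ref{totaro}. The only difference is that you spell out the intermediate justifications (the divisor bookkeeping and the single sign from $x^{-1}\mapsto x$) that the paper's proof leaves implicit in its chain of equalities.
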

 
 \begin{proof} 
By the definition of $\rho_{c}$ on $B_{2}(k_2),$ the left hand side of of the expression is $\rho_{c}(0,a,1,\infty). $ On the other hand, again by the definition of $\rho_{c}$ we can express it as the regulator of a triple of functions of $\mathbb{P}^{1} _{2}:$ 
$$
\rho_{c}(0,a,1,\infty)=\rho((1-\frac{a}{x}) \wedge (1-\frac{1}{x}) \wedge \frac{1}{x} )= -\rho((1-x )\wedge x \wedge  (1-\frac{a}{x}) )=-\ell i_{2}(\{ a \}_2),
$$
where the last equality follows from the Totaro cycle computation in Lemma \ref{totaro}.
  \end{proof}

\section{Infinitesimal regulator on algebraic cycles }

In this section,  we will construct a regulator map for cycles over $S,$ which has the properties that it vanishes on boundaries and   assumes the same value if the  cycles are equivalent modulo $(t^2),$ for an appropriately defined equivalence relation.

\subsection{Bloch's cubical higher Chow groups}

We first recall Bloch's definition \cite{bloch} of cubical higher Chow groups in the case of a smooth $k$-scheme $X/k.$ Let $\square_k:= \mathbb{P}^{1} _{k} \setminus \{ 1\}$ and $\square ^n _{k}$ the $n$-fold product of $\square_k$ with itself over $k, $ with the coordinate functions $y_1, \cdots, y_n.$ For  a smooth $k$-scheme $X,$ we let   $\square^n _{X} :=X \times_k \square_k ^n.$  A codimension 1 face of $\square^n _{X}$ is a divisor $F_{i} ^a$  of the form $y_{i}=a,$ for $1\leq i \leq n,$ and $a \in \{0,\infty \}.$ A face of $\square^n _{X}$ is either the whole scheme $\square^n _{X}$ or an arbitrary intersection of codimension 1 faces.

 Let $\underline{z}^q (X, n)$ be the free abelian group on the set of codimension $q,$ integral, closed subschemes $Z \subseteq  \square^n _{X}$ which are {\it admissable}, i.e.  which intersect each face properly on $\square^n _{X}.$ For each codimension one face $F_{i} ^a,$ and  irreducible $Z \in \underline{z} ^q (X, n)$, we let $\partial_i ^{a} (Z)$  be the cycle associated to the scheme $Z \cap F_{i} ^{a}.$ We let $\partial:= \sum_{i=1} ^n (-1)^n (\partial_i ^{\infty} - \partial_i ^0)$ on $\underline{z}^q (X, n)$. One checks immediately  that $\partial ^2  = 0.$ We therefore obtain a complex $(\underline{z}^q (X, \cdot),\partial).$

 Let  $\underline{z}^q (X, n)_{\rm degn}$ denote the subgroup of  degenerate cycles, i.e. sums of those obtained by pulling back via one of the standard projections $p_i:\square_X ^n \to \square_X ^{n-1},$ for $0 \leq i \leq n,$ which omits the $i$-th coordinate on $\square_X ^n$ and 
 $z^q (X, \cdot):= \underline{z}^q (X, \cdot)/ \underline{z}^q (X, \cdot)_{\rm degn}$ the corresponding non-degenerate complex. The complex $(z^q (X, \cdot), \partial)$  is called the \emph{higher Chow complex} of $X$ and its homology ${\rm CH}^q (X, n):= {\rm H}_n (z^q (X, \cdot))$ is the higher Chow group of $X$. It is a theorem of Voevodsky  that the higher Chow groups  ${\rm CH}^q (X, n)$  compute the motivic cohomology ${\rm H}_{\mathcal{M}} ^{2q-n} (X, \mathbb{Z}(q)),$ for  smooth varieties $X/k.$

\subsection{Cycles over $S$}  

In the following, we need to work with cycles over $S$ which have finite reduction in a certain sense. We define these groups as follows. Let $\overline{\square}_{k}:= \mathbb{P}^{1} _{k},$  $\overline{\square}_{k} ^{n},$ the $n$-fold product of  $\overline{\square}_{k} $ with itself over $k,$ and  $\overline{\square}_{S} ^{n} :=\overline{\square}_{k} ^{n} \times _k S.$ We define a subcomplex $\underline{z}^q _{f} (S, \cdot) \subseteq \underline{z}^q (S, \cdot)$, as  the subgroup generated by integral, closed subschemes $Z \subseteq \square_S ^n$ which are admissible in the above sense and have {\it finite reduction}, i.e. $\overline{Z}$  intersects each $s\times \overline{F}$ properly on $\overline{\square}_S ^n,$  for  every face $F$ of $\square^n_{k}.$  Modding out by degenerate cycles, we  have a complex $z^q_{f} (S, \cdot).$

We want to emphasize that the more familiar notion of  $s$-admissability is not enough for our purposes. This condition would require that the cycle  $Z \subseteq \square_S ^n$ intersect each    $S \times F$ and  $s\times F$  properly on $\square_S ^n,$  for  every face $F$ of $\square^n_{k}.$ For example, the cycle $(1+t,t)$ in $\underline{z}^{2} (S,2) $ is  $s$-admissable, but does not have finite reduction because its closure contains the point  $(1,0)$ over the special fiber. Our regulator will  not be defined on this cycle.

\subsection{Definition of the regulator} 
An irreducible cycle $p$ in $ \underline{z}_{f} ^2 (S,2)$ is  given by a closed point $p_{\eta} $ of  $\square ^{2} _{\eta}$ whose closure $\overline{p}$  in $\overline{\square} ^{2} _{S}$ does not meet $(\{ 0,\infty\} \times \overline{\square}_{S}) \cup ( \overline{\square}_{S} \times \{0, \infty \}).$ Let $\tilde{p}$ denote the normalisation of $\overline{p} $ and $\{s_1,\cdots, s_m \}$ the closed fiber of $\tilde{p}.$ We have  surjections $\hat{\pazocal{O}}_{\tilde{p},s_i} \to k(s_i).$ Since $k(s_i)/k$ is finite \'{e}tale there is a unique splitting $\sigma_{\tilde{p},s_i}:k(s_i)\to \hat{\pazocal{O}}_{\tilde{p},s_i}.$   We define $
\log ^{\circ} _{\tilde{p},s_i}: \hat{\pazocal{O}}_{\tilde{p},s_i} ^{\times} \to \hat{\pazocal{O}}_{\tilde{p},s_i},
$
by 
$$
\log ^{\circ} _{\tilde{p},s_i}(y)=\log(\frac{y}{\sigma_{\tilde{p},s_i}(y(s_i))}).
$$

Let 
\begin{eqnarray}\label{defnl} 
\;\; \; l(p):=\sum _{1 \leq i \leq m}{\rm Tr}_{k}\Big(res_{\tilde{p}, s_i}\Big(\frac{1}{t^3}\big(\log^{\circ} _{\tilde{p},s_i} (y_1) \cdot d\log(y_2)-\log^{\circ} _{\tilde{p},s_i}(y_2) \cdot d \log(y_1) \big) \Big) \Big).
\end{eqnarray}


The following lemma shows that the value of $l$ on the graph of a function assumes  a familiar form.

\begin{lemma}\label{lemmal} Suppose that  $f_1(t), f_2(t) \in k[[t]] ^{\times}.$ Then $p:=(f_1(t),f_2(t))$ is an irreducible 0-cycle in $\underline{z}_{f} ^{2}(S,2)$ and  
$$
l(p)=\log^{\circ} (f_1)[t]\cdot  \log^{\circ} (f_2)[t^2]-\log^{\circ} (f_2)[t]\cdot  \log^{\circ} (f_1)[t^2].
$$ 
\end{lemma}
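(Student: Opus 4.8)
The plan is to reduce the general residue expression (\ref{defnl}) defining $l(p)$ to an explicit Laurent-coefficient computation, exploiting that for $p=(f_1(t),f_2(t))$ with $f_i \in k[[t]]^{\times}$ the underlying geometry is as simple as possible. First I would check that $p$ is a legitimate element of $\underline{z}^2 _f(S,2)$. The section of $\overline{\square}^{2} _{S}\to S$ determined by $(f_1(t),f_2(t))$ is a closed immersion whose image is $\overline{p}$, and because $f_1,f_2$ are units their reductions $f_1(0),f_2(0)$ lie in $k^{\times}$. Hence the special fibre of $\overline{p}$ is the single point $(f_1(0),f_2(0))$, which avoids every locus $\{y_i=0\}$ and $\{y_i=\infty\}$; so $\overline{p}$ meets each $s\times \overline{F}$ emptily, $p$ is admissible, has finite reduction (precisely the condition that fails for the excluded cycle $(1+t,t)$), and is an irreducible $0$-cycle. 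Since $\overline{p}\cong S={\rm Spec}\, k[[t]]$ is already normal, the normalisation is $\tilde{p}={\rm Spec}\, k[[t]]$, its closed fibre consists of the single point $s_1$ with $k(s_1)=k$, so that $m=1$ and the trace ${\rm Tr}_k$ in (\ref{defnl}) is the identity.

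I would then identify the data entering (\ref{defnl}). Under the isomorphism $\tilde{p}\cong S$ the coordinate functions $y_1,y_2$ pull back to $f_1(t),f_2(t)$, while $\hat{\pazocal{O}}_{\tilde{p},s_1}=k[[t]]$ and the unique splitting $\sigma_{\tilde{p},s_1}$ is the inclusion $k\hookrightarrow k[[t]]$. Therefore $\log^{\circ} _{\tilde{p},s_1}(y_j)=\log\big(f_j/f_j(0)\big)=\log^{\circ}(f_j)$ agrees with the $\log^{\circ}$ of the introduction, and $d\log(y_j)=d\big(\log^{\circ}(f_j)\big)$. The sum in (\ref{defnl}) thus collapses to the single term
$$
l(p)=res_{t=0}\Big(\tfrac{1}{t^3}\big(\log^{\circ}(f_1)\, d\log(f_2)-\log^{\circ}(f_2)\, d\log(f_1)\big)\Big).
$$

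Finally I would compute this residue directly. Setting $u:=\log^{\circ}(f_1)$ and $v:=\log^{\circ}(f_2)$, both lie in $t\,k[[t]]$ (since $f_j/f_j(0)$ has constant term $1$), so I write $u=\sum_{n\geq 1}u_n t^n$ and $v=\sum_{n\geq 1}v_n t^n$ with $u_n=\log^{\circ}(f_1)[t^n]$ and $v_n=\log^{\circ}(f_2)[t^n]$. The residue at $t=0$ is the coefficient of $dt/t$ in $\tfrac{1}{t^3}(u\,dv-v\,du)$, equivalently the coefficient of $t^{2}\,dt$ in $u\,dv-v\,du=(uv'-vu')\,dt$. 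A one-line expansion gives that coefficient as $u_1v_2-u_2v_1$, which is exactly $\log^{\circ}(f_1)[t]\cdot\log^{\circ}(f_2)[t^2]-\log^{\circ}(f_2)[t]\cdot\log^{\circ}(f_1)[t^2]$, as claimed. There is no real obstacle in this argument; the only steps requiring attention are confirming finite reduction rather than mere $s$-admissibility, and correctly isolating the $t^{-1}\,dt$ term (the lower-order pole $t^{-2}\,dt$ contributes nothing to the residue), both of which are elementary once the geometry of $\tilde{p}$ has been pinned down.
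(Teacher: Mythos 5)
Your proposal is correct and follows essentially the same route as the paper: identify $\tilde{p}=\overline{p}=p$ with ${\rm Spec}\,k[[t]]$, note $m=1$ and $\log^{\circ}_{\tilde{p},s_1}(y_j)=\log^{\circ}(f_j)$, and extract the coefficient of $t^2\,dt$ in $\log^{\circ}(f_1)\,d\log(f_2)-\log^{\circ}(f_2)\,d\log(f_1)$, yielding $a_1b_2-a_2b_1$. The only difference is that you spell out the admissibility and finite-reduction checks, which the paper leaves implicit.
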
 

\begin{proof}
Note that with the above notation $\tilde{p}=\overline{p}=p,$ and $m=1$ with $s_{1}=(f_1(0),f_2(0)).$  
Let us write $\log ^{\circ } (f_1)= a_{1}t+a_2t^2+\cdots $ and $\log^{\circ}(f_2)= b_{1}t+b_2t^2+\cdots.$   We have $\log^{\circ} _{\tilde{p},s_1} (y_i)=\log ^{\circ } (f_i),$ and $d \log (y_{i})=d \log ^{\circ} (f_i).$ The expression (\ref{defnl}) then takes the form 
$$
res_{t} \frac{1}{t^3} (   (a_{1}t+a_2t^2+\cdots)  (b_{1}+2b_2t+\cdots)- (b_{1}t+b_2t^2+\cdots)  (a_{1}+2a_2t+\cdots) )dt.
$$
This expression is equal to $a_{1}b_{2}-a_{2}b_{1},$ which is exactly the expression in the statement of the lemma. 
\end{proof}

\begin{definition}
We  will define a regulator 
$
\rho _f: \underline{z}_{f} ^2 (S,3) \to k
$ 
 as the composition $l  \circ \partial ,$ where 
$
l: \underline{z}_{f} ^2 (S,2) \to k
$
is the map defined in (\ref{defnl}).
\end{definition}

\subsection{Properties of the regulator} In this subsection, we prove that the regulator has the expected properties.

\subsubsection{Vanishing on the boundaries} By the construction of the regulator, it is easy to see that it is 0 on the boundaries. 

\begin{proposition}
 The composition $\rho_f\circ\partial: \underline{z}^{2} _{f} (S,4) \to k $ is 0. 
\end{proposition}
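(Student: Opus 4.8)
The plan is to reduce the statement to the identity $\partial^2=0$ for the cubical boundary map, which has already been recorded when the higher Chow complex was introduced. By definition $\rho_f=l\circ\partial$, so for any $p\in\underline{z}^2_f(S,4)$ one has
$$
\rho_f(\partial p)=l\bigl(\partial(\partial p)\bigr)=l\bigl(\partial^2 p\bigr),
$$
and it therefore suffices to know that $\partial^2 p=0$ in $\underline{z}^2_f(S,2)$ together with the fact that $l$ is a homomorphism, so that it sends $0$ to $0$.

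First I would recall that $\underline{z}^2_f(S,\cdot)$ was introduced precisely as a \emph{subcomplex} of the ambient complex $(\underline{z}^2(S,\cdot),\partial)$. In particular the boundary map restricts to maps $\partial:\underline{z}^2_f(S,n)\to\underline{z}^2_f(S,n-1)$: applying $\partial$ to a finite-reduction cycle again yields an admissible cycle of finite reduction, since each codimension-one face intersection $\partial_i^a$ of a finite-reduction cycle is itself of finite reduction, which is exactly the content of $\underline{z}^2_f(S,\cdot)$ being a subcomplex. The relation $\partial^2=0$ then holds on $\underline{z}^2_f(S,\cdot)$ because it holds on the full complex $(\underline{z}^2(S,\cdot),\partial)$ and the inclusion is compatible with $\partial$. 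Thus $\partial^2 p=0$ for every $p\in\underline{z}^2_f(S,4)$.

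Finally, since $l:\underline{z}^2_f(S,2)\to k$ is additive by construction (it is defined on the free abelian group of cycles by summing local contributions over the closed fiber of the normalization), $l(\partial^2 p)=l(0)=0$. Combining this with the displayed computation gives $\rho_f(\partial p)=0$ for all $p$, which is the assertion. The point I want to emphasize is that there is no substantive obstacle here: the vanishing is a purely formal consequence of $\partial^2=0$, and the only thing that genuinely requires the finite-reduction hypothesis is the well-definedness of $l$, hence of $\rho_f$, on the relevant groups, which is guaranteed once one knows that $\underline{z}^2_f(S,\cdot)$ is stable under $\partial$.
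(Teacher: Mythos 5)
Your proof is correct and is exactly the paper's argument: the paper's proof is the one-line observation that $\rho_f=l\circ\partial$ and $\partial\circ\partial=0$. Your additional remarks on $\underline{z}^2_f(S,\cdot)$ being a subcomplex and on the additivity of $l$ only make explicit what the paper takes for granted.
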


\begin{proof} 
This follows  from the fact that $\rho=l\circ\partial$ and that $\partial \circ \partial=0.$ 
\end{proof}

\subsubsection{Anti-symmetry} 

Let $G_{n}$ be the semi-direct product of $S_{n}$ with  $(\mathbb{Z}/2)^n,$ with $S_{n}$ acting on $(\mathbb{Z}/2)^n$ by permuting the factors. Let $\chi:G_{n} \to \mathbb{Z}/2\simeq \{-1, 1\}$ be the homomorphism  which restricts to identity on each $\mathbb{Z}/2$ factor and to the sign character on $S_n.$ There is a natural action of $G_{n}$ on $z^{q} (S,n),$ where $S_{n}$ permutes the coordinates in $\square^{n}_{S}$ and $\mathbb{Z}/2$ in the $i$-th coordinate in $(\mathbb{Z}/2)^n$ acts by switching $0$ and $\infty$ in the $i$-th coordinate in $\square ^n _S.$ Then the regulator has the following anti-symmetry property. 

\begin{proposition}
For $\sigma \in G_{3}$ and $Z \in z^{2} _f(S,3),$ $\rho_f(\sigma(Z))=\chi(\sigma)\rho_f(Z).$  
\end{proposition}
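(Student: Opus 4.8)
The plan is to reduce to the generators of $G_3$ and then to propagate signs through the factorisation $\rho_f=l\circ\partial$. Since $\chi$ is a homomorphism and $G_3$ acts by automorphisms of $z^2_f(S,3)$, it suffices to check $\rho_f(gZ)=\chi(g)\rho_f(Z)$ for $g$ ranging over the adjacent transpositions $(12),(23)\in S_3$ and the coordinate involutions $\iota_j\colon y_j\mapsto 1/y_j$, which together generate $G_3$; on each of these $\chi$ equals $-1$. The two inputs I would isolate are: (a) $l$ is $\chi$-anti-invariant for the induced action of $G_2$ on $z^2_f(S,2)$; and (b) each face operator $\partial^a_i$ commutes with the $G_3$-action up to an explicit $G_2$-operator and a sign. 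Once (a) and (b) are in hand, substituting them into $\partial=\sum_i(-1)^i(\partial^\infty_i-\partial^0_i)$ and applying $l$ will give the claim. Throughout, write $\epsilon_j$ for the involution $y_j\mapsto 1/y_j$ on $\square^2$ and $\mathrm{sw}$ for the transposition of the two coordinates of $\square^2$.

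For input (a) I would argue that the integrand of (\ref{defnl}) is manifestly alternating under $y_1\leftrightarrow y_2$, so $l\circ\mathrm{sw}=-l$; and that for the involution $y\mapsto 1/y$ one uses that $\sigma_{\tilde p,s_i}$ is a ring homomorphism, whence $\log^\circ_{\tilde p,s_i}(1/y)=\log\big((1/y)\,\sigma_{\tilde p,s_i}(y(s_i))\big)=-\log^\circ_{\tilde p,s_i}(y)$ and likewise $d\log(1/y)=-d\log(y)$. Each of these reverses the sign of both summands of (\ref{defnl}), giving $l\circ\epsilon_j=-l$. Here one also uses that each generator of $G_2$ is an isomorphism over $S$, so it identifies the normalisations $\tilde p$, matches the closed points $s_i$, and leaves the residues and the traces $\mathrm{Tr}_k$ unchanged. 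This establishes $l(gp)=\chi(g)\,l(p)$ for all $g\in G_2$.

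For input (b) I would record, for each generator, how it commutes with the faces while tracking the relabelling of the surviving coordinates. For the involution $\iota_3$ the survivor in positions $i=1,2$ is $y_3$ in the second slot, and $\iota_3$ swaps the two faces $y_3=0,\infty$:
\[
\partial^a_i(\iota_3 Z)=\epsilon_2(\partial^a_i Z)\ (i=1,2),\qquad (\partial^\infty_3-\partial^0_3)(\iota_3 Z)=-(\partial^\infty_3-\partial^0_3)Z.
\]
Feeding this into $\partial(\iota_3 Z)$, applying $l$, and turning each $l\circ\epsilon_2$ into $-l$ by (a), the three contributions reassemble into $l((\partial^\infty_1-\partial^0_1)Z)-l((\partial^\infty_2-\partial^0_2)Z)+l((\partial^\infty_3-\partial^0_3)Z)=-l(\partial Z)=-\rho_f(Z)$. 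For the transposition $(12)$ the analogous relations are $\partial^a_1((12)Z)=\partial^a_2 Z$, $\partial^a_2((12)Z)=\partial^a_1 Z$, and $\partial^a_3((12)Z)=\mathrm{sw}(\partial^a_3 Z)$, and the same manipulation (now using $l\circ\mathrm{sw}=-l$) again yields $l(\partial((12)Z))=-\rho_f(Z)$. The remaining generators are handled by the identical computation or by $S_3$-conjugation, and the homomorphism property of $\chi$ then extends the identity to all of $G_3$.

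I expect the only genuine obstacle to be the bookkeeping in (b): for each generator and each index $i$ one must correctly identify the induced $G_2$-operator (swap, a single-coordinate inversion, or the identity) and assemble the sign contributed both by the factor $(-1)^i$ and by the face behaviour. The subtle point is to separate the faces on which a generator exchanges $0\leftrightarrow\infty$ — producing an honest $-1$ already at the level of cycles — from those on which it merely inverts or permutes a surviving coordinate, where the $-1$ appears only after $l$ is applied via (a). None of this is deep, but the signs must be tracked carefully; once the commutation table is fixed, the verification on generators is immediate and routine.
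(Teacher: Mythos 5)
Your proposal is correct and follows essentially the same route as the paper: reduce to the generators of $G_{3}$, use the $G_{2}$-anti-symmetry of $l$ (visible from the formula (\ref{defnl}) since $\log^{\circ}(1/y)=-\log^{\circ}(y)$ and $d\log(1/y)=-d\log(y)$), and track how each face operator commutes with the action up to an induced $G_{2}$-operator and a sign. The paper dispatches the $S_{3}$ generators with the phrase ``direct computation''; you have simply carried that computation out explicitly, correctly using the alternating signs $(-1)^{i}$ in $\partial$, which are in fact essential for the transposition case.
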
  
 
\begin{proof} 
Note that by the description (\ref{defnl}), $l:z_{f} ^{2}(S,2) \to k$ is anti-symmetric with respect to the action of $G_{2}.$ If $\sigma _{i} \in (\mathbb{Z}/2)^3$ is the element which is non-trivial only in the $i$-th coordinate then $ (\partial _{i} ^0 -\partial _{i} ^{\infty}) \sigma _{i}(Z)=-(\partial _{i} ^0 -\partial _{i} ^{\infty}) (Z).$ For $j \neq i,$ $ l((\partial _{j} ^0 -\partial _{j} ^{\infty}) \sigma _{i}(Z))=l (\alpha_j(\sigma_i)((\partial _{j} ^0 -\partial _{j} ^{\infty}) (Z))),$ where 
$\alpha_j:(\mathbb{Z}/2)^3 \to (\mathbb{Z}/2)^2$ is the homomorphism which omits the $i$-th term. Then the anti-symmetry of $l$ gives the desired equality for $\sigma_i.$ To complete the proof we need to show anti-symmetry for $\sigma \in S_3,$ and hence only for $\sigma=(12)$ or $(23),$ since they generate $S_{3}.$ This is then seen by a direct computation.  
\end{proof}

\subsubsection{Modulus property}\label{modulus section}

 Suppose that $Z_i$ for $i=1,2$ are two irreducible cycles in $\underline{z}^{2} _{f} (S,3).$ We say that $Z_{1}$ and $Z_{2}$ are equivalent modulo $t^m$ if the following condition $(M_{m})$ holds:
 
 (i) $\overline{Z}_{i}/S$ are smooth with $(\overline{Z}_i)_{s} \cup (\cup_{j,a} |\partial _j ^{a} Z_i|) $  a strict normal crossings divisor on $\overline{Z}_i.$
  
 and more importantly 
 
 (ii) $\overline{Z}_{1}|_{t^m}=Z_{2} |_{t^m}.$

The main result of this section is the following:

\begin{theorem}\label{modulus theorem}
If $Z_{i} \in \underline{z} _{f} ^{2}(S,3),$ for $i=1,2,$  satisfy the condition $(M_{2})$ then they have the same infinitesimal regulator value: 
$$\rho_f (Z_{1})=\rho_f(Z_{2}).$$
\end{theorem}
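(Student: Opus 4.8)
The plan is to reduce the equality $\rho_f(Z_1)=\rho_f(Z_2)$ to a statement about the $1$-form $\omega$ of Section 2, exactly as the global independence result in Proposition \ref{indepgen} was reduced to a sum-of-residues vanishing. Recall that $\rho_f=l\circ\partial$, so writing $W_i:=\partial Z_i\in\underline{z}^2_f(S,2)$, the two cycles $W_1$ and $W_2$ are $0$-cycles on $\square^2_S$, and the hypothesis $(M_2)$ says $\overline{Z}_1|_{t^2}=\overline{Z}_2|_{t^2}$, hence $W_1|_{t^2}=W_2|_{t^2}$. Thus the two boundary cycles agree modulo $t^2$, and the smoothness/normal-crossings part of $(M_m)(i)$ guarantees that $\overline{Z}_i$ is a smooth proper curve over $S$ on which the coordinate functions $y_1,y_2,y_3$ are rational functions that are good with respect to a suitable system of uniformizers $\mathcal{P}_i$. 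This is the geometric setup of Section 3: each $\overline{Z}_i$ plays the role of the curve $C_2$ (or rather its lifting), and $\rho_f(Z_i)$ should be identified with the value of the $\omega$-based regulator $\rho$ applied to $\frac{z_1}{z_0}\wedge\frac{z_2}{z_0}\wedge\frac{z_3}{z_0}$ restricted to $\overline{Z}_i$.

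\emph{First} I would make this identification precise: using Lemma \ref{lemmal}, the local contribution to $l$ at a point $s_i$ of the closed fiber of $\tilde p$ is exactly $\ell$ applied to the residue of the relevant wedge, so $l\circ\partial$ computed on $\overline{Z}_i$ matches, term by term, the formula (\ref{regcurves}) for $\rho$ restricted to the smooth proper curve $\overline{Z}_i$ with its good global lifting. \emph{Second}, since $\overline{Z}_1$ and $\overline{Z}_2$ have the same reduction modulo $t^2$, their special fibers are canonically isomorphic via the isomorphism $\chi:\overline{Z}_1|_{t^2}\xrightarrow{\sim}\overline{Z}_2|_{t^2}$ coming from $(M_2)(ii)$, and the coordinate functions correspond to each other under $\chi$. \emph{Third}, I would apply the argument of Proposition \ref{indepgen}: the difference $\rho_f(Z_1)-\rho_f(Z_2)$ equals a sum over closed points of the residues of the single rational $1$-form
$$
\underline{\chi}_*\big(\omega(\tilde{p}_1,\tilde{p}_2,\chi)\big)
$$
on the special-fiber curve, where $\tilde p_i$ denotes the lift of the coordinate-function triple on $\overline{Z}_i$. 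By the residue theorem on a smooth proper curve, the sum of all residues of a rational $1$-form is zero, so this difference vanishes and $\rho_f(Z_1)=\rho_f(Z_2)$.

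\emph{The main obstacle} I expect is verifying that $(M_2)$ really supplies all the geometric hypotheses needed to legitimately invoke Proposition \ref{indepgen} in this context. Proposition \ref{indepgen} is stated for a fixed abstract smooth proper curve $C_2$ with two generic liftings of a \emph{fixed} triple of functions; here instead I have two genuinely different curves $\overline{Z}_1,\overline{Z}_2$ that merely agree modulo $t^2$, together with triples of functions that also agree only modulo $t^2$. The role of $(M_2)(i)$ is precisely to ensure that each $\overline{Z}_i$ is smooth and that the divisors of $y_1,y_2,y_3$ form a strict normal crossings divisor, so that the coordinate functions are $\mathcal{P}_i$-good and the residues $\ell(\operatorname{res})$ are the ones appearing in $l$; I would need to check carefully that goodness holds at \emph{every} closed point of the special fiber, including the points lying over $\partial Z_i$. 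Once goodness and the normal-crossings condition are in hand, the two distinct curves $\overline{Z}_1,\overline{Z}_2$ can be treated uniformly as global good liftings of the common reduction, and Corollary \ref{explicitrho} together with the transitivity property (P5) of $\omega$ lets me write the difference as the residues of a single $1$-form on the shared special fiber; the vanishing is then immediate from the residue theorem. The delicate point is thus geometric bookkeeping at the boundary points, not the homological or residue-theoretic core of the argument.
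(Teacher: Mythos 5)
Your proof is correct and follows essentially the same route as the paper: both arguments use $(M_{2})$(i) to make the coordinate functions good on $\overline{Z}_i$, identify $\rho_f(Z_i)=l\circ\partial(Z_i)$ with the curve regulator $\rho$ applied to the coordinate triple on $\overline{Z}_{i}|_{t^2}$ via Corollary \ref{explicitrho} (with a one-set cover) and Lemma \ref{lemmal}, and then exploit that the reductions mod $t^2$ coincide. The only difference is that the paper concludes immediately, since $\overline{Z}_{1}|_{t^2}=\overline{Z}_{2}|_{t^2}$ makes the two quantities literally the value of the already well-defined $\rho$ on the same element of $\Lambda^{3}k(\overline{Z}_{1}|_{t^2},\mathcal{P}_2(1))^{\times}$, whereas you re-derive that independence by hand through (P5), (P8) and the residue theorem --- valid, but this is exactly the content of Propositions \ref{indeplocal} and \ref{indepgen} and need not be repeated.
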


\begin{proof} 
By assumption (i) in $(M_{2}),$ we see that the divisors $F_{j} ^{a}$ define a system of uniformizers $\mathcal{P}_S(i)$ on $\overline{Z}_i $ such that the triple of functions $(y_1 \wedge y_2 \wedge y_3)|_{\overline{Z}_i}$ are $\mathcal{P}_S(i)$-good. Therefore, if $\mathcal{P}_2(i)$ denote the reduction mod $(t^2)$ of this system of uniformizers to $\overline{Z}_{i}|_{t^2}$ then $(y_1 \wedge y_2 \wedge y_3)|_{\overline{Z}_{i}|_{t^2}} \in \Lambda^3 k(\overline{Z}_{i}|_{t^2},\mathcal{P}_2(i))^\times$ and $(y_1 \wedge y_2 \wedge y_3)|_{\overline{Z}_i}$ can be used as a global good lifting to compute $\rho((y_1 \wedge y_2 \wedge y_3)|_{\overline{Z}_{i}|_{t^2}})$ using Corollary \ref{explicitrho} with $r=1.$ More precisely, 
\begin{eqnarray}\label{regfinite}
\rho((y_1 \wedge y_2 \wedge y_3)|_{\overline{Z}_{i}|_{t^2}})=\sum _{z \in |(\overline{Z}_{i})_s|}{\rm Tr}_k \ell (res_{\tilde{\pi}_z(i)} ((y_1 \wedge y_2 \wedge y_3)|_{\overline{Z}_{i}})),
\end{eqnarray}
where $\{ \tilde{\pi}_z(i)|   z\in |(\overline{Z}_{i})_s|\}$ is the system of uniformizers $\mathcal{P}_{S}(i).$ 

The only contribution to the sum in (\ref{regfinite}) comes from when $z \in (\partial_j ^{a} Z_i)_s.$ The sum of the  contributions coming from   $z \in (\partial_j ^{a} Z_i)_s$  is $l(\partial_j ^{a} Z_i)$ by Lemma \ref{lemmal}.   Summing over all the faces we see that 
$\rho((y_1 \wedge y_2 \wedge y_3)|_{\overline{Z}_{i}|_{t^2}})=l\circ \partial (Z_{i})=\rho_{f}(Z_{i}).$ Since by the condition (ii) in $(M2),$ $\overline{Z}_{1}|_{t^2}=\overline{Z}_{2}|_{t^2},$ the left hand sides of the last expression is the same for $i=1$ and $i=2$ and therefore the right hand sides are the same. This gives the desired equality.  
\end{proof}

\begin{remark}\label{chow group}
By the same argument, one can  prove a somewhat stronger statement than the above. Namely, that we do not need to assume that $\overline{Z}_{i}/S$  are smooth with the the given divisors, being normal crossings divisors on them. But we assume that this holds after a common imbedded resolution of singularities for $\overline{Z}_i$ in $\overline{\square}^3 _{S}$ and these new smooth cycles are congruent modulo $t^2.$  Our main aim in doing this would be to define the correct higher Chow groups over $k_2,$ by defining the cycle complex to be cycles over $S,$ and setting two cycles to be the same if they have the same reduction modulo $t^2 $ after an appropriate resolution of singularities. We will pursue this approach in future work, the Milnor case of which is done in \cite{pu}.

\end{remark}

\subsubsection{Vanishing on the products} 
 Suppose that $Z$ is an irreducible cycle in $\underline{z}_{f} ^2 (S,3)$ which satisfies condition (i) in \textsection \ref{modulus section}.

\begin{proposition}\label{products}
If there is $1\leq i\leq 3$ such that $y_{i} $ restricted to $Z_{2}:=Z|_{t^2}$ is in $k_2 ^{\times}$ then $\rho_f(Z)=0.$ 
\end{proposition}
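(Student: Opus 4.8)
The plan is to reduce the statement to the vanishing result already established for the curve regulator $\rho$, namely Proposition \ref{rho0}. The cycle $Z$ satisfies condition (i) of \textsection \ref{modulus section}, so $\overline{Z}/S$ is smooth and the faces $F_j^a$ cut out a system of uniformizers $\mathcal{P}_S$ on $\overline{Z}$ with respect to which the triple $(y_1 \wedge y_2 \wedge y_3)|_{\overline{Z}}$ is good. Since $\overline{Z}$ is closed in the proper scheme $\overline{\square}_S^3$ and is smooth over $S$ of relative dimension one, its reduction $\overline{Z}|_{t^2}$ is a smooth and proper curve over $k_2$, and $(y_1 \wedge y_2 \wedge y_3)|_{\overline{Z}|_{t^2}}$ lies in $\Lambda^3 k(\overline{Z}|_{t^2}, \mathcal{P}_2)^\times$, where $\mathcal{P}_2$ denotes the reduction of $\mathcal{P}_S$ modulo $(t^2)$.

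First I would invoke the computation carried out in the proof of Theorem \ref{modulus theorem}: because $\overline{Z}$ provides a single global good lifting, Corollary \ref{explicitrho} (with $r=1$) applies and identifies the cycle regulator with the curve regulator,
$$
\rho_f(Z) = \rho\big((y_1 \wedge y_2 \wedge y_3)|_{\overline{Z}|_{t^2}}\big).
$$
This is the essential structural input; everything now takes place on the curve $\overline{Z}|_{t^2}$ over $k_2$.

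Next I would translate the hypothesis. By assumption $y_i$ restricted to $Z|_{t^2}$ lies in $k_2^\times$; since $Z|_{t^2}$ and its closure $\overline{Z}|_{t^2}$ share the same field of rational functions, this says precisely that $y_i|_{\overline{Z}|_{t^2}} \in k_2^\times \subseteq k(\overline{Z}|_{t^2}, \mathcal{P}_2)^\times$. Consequently, using the anti-symmetry of $\rho$ to move the $i$-th factor into the first slot, the element $(y_1 \wedge y_2 \wedge y_3)|_{\overline{Z}|_{t^2}}$ lies, up to sign, in $k_2^\times \wedge \Lambda^2 k(\overline{Z}|_{t^2}, \mathcal{P}_2)^\times$. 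Proposition \ref{rho0} then gives $\rho\big((y_1 \wedge y_2 \wedge y_3)|_{\overline{Z}|_{t^2}}\big) = 0$, and combining with the displayed identity yields $\rho_f(Z) = 0$.

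The argument is short precisely because all the genuine work has been done earlier. The only point requiring care, and the closest thing to an obstacle, is the first step: one must confirm that condition (i) really licenses the use of $\overline{Z}$ as a single global good lifting in Corollary \ref{explicitrho}, so that the cycle regulator $\rho_f = l \circ \partial$ coincides with the intrinsic curve regulator $\rho$ of the good triple on $\overline{Z}|_{t^2}$. This is exactly the reduction performed in the proof of Theorem \ref{modulus theorem}, so no new difficulty arises; the remaining verifications, namely that $Z|_{t^2}$ and $\overline{Z}|_{t^2}$ have the same function field and the passage to Proposition \ref{rho0}, are routine.
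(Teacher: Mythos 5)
Your proof is correct and follows essentially the same route as the paper: reduce to the curve regulator via the global good lifting $\overline{Z}$ and the argument of Theorem \ref{modulus theorem} (Corollary \ref{explicitrho} with $r=1$), then apply Proposition \ref{rho0} to the element of $k_2^{\times}\wedge\Lambda^2 k(\overline{Z}|_{t^2},\mathcal{P}_2)^{\times}$. No discrepancies to report.
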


\begin{proof}
Without loss of generality assume that $i=1,$ and $y_{1}$ restricted to $Z|_{2}$ is $\alpha \in k_2 ^\times.$ By exactly as in the proof of Theorem \ref{modulus theorem} we see that 
$$\rho_f(Z)=\rho((y_{1}\wedge y_2 \wedge y_3)|_{Z_2})=\rho(\alpha \wedge( y_2 \wedge y_3)|_{Z_2}).$$
On the other hand, the right hand side is 0 by Proposition \ref{rho0}.
\end{proof}

\subsubsection{Comparison with Park's regulator} 

In this section, we will compare our construction to Park's construction in \cite{P1}. We first recall Park's construction in the cases that relate to our discussion. Let $\Diamond _n:=\mathbb{A}^{1} _k \times_k \square ^n _k$ and   $\overline{\Diamond}_n:=\mathbb{A}^{1} _k \times_k \overline{\square} ^n _k,$ with $t$ being the coordinate on $\mathbb{A}^{1} _k.$ The  codimension one face $F_{i} ^{a},$ for $1\leq i \leq n,$ and $a\in \{ 0,\,\infty \}$ is given by $y_i=a.$ For a cycle $Z \subseteq \Diamond ^n,$ its face $\partial _i ^a (Z)$ is defined as the cycle associated to $Z \cap F_{i} ^a.$  As usual $\partial := \sum _{1 \leq i\leq n} (\partial _i ^{0} -\partial _{i} ^{\infty}).$ We will consider two different versions of the additive Chow groups: one with the strong sup modulus condition, denoted by the subscript ``ss," and the other with the sup modulus condition, denoted by the subscript ``s." In \cite{P1}, only the one with the sup modulus condition is considered. 

The groups $c_{p,s}(\Diamond_n;2)$ (resp. $c_{p,ss}(\Diamond_n;2)$) are defined inductively:  

(a) $c_{0,s}(\Diamond_n;2)=c_{0,ss}(\Diamond_n;2)$ is defined to be the free abelian group on the set of  closed points on $\Diamond_n \setminus (\cup_{i,a} F_{i} ^a \cup \{ t=0\}).$ 

(b) $c_{p,s}(\Diamond_n;2)$ (resp. $c_{p,ss}(\Diamond_n;2)$) is defined to be the free abelian group on the set of irreducible $p$-dimensional closed subvarieties $W$ of $\Diamond_n$ which satisfy the following properties: 

$(i)$ $W$ intersects all the faces properly.

$(ii)$ For  $1\leq i \leq n$ and $a \in \{ 0,\, \infty\},$ $\partial _i ^a (W) \in c_{p-1,s}(\Diamond_{n-1};2)$ (resp. $\partial _i ^a (W) \in c_{p-1,ss}(\Diamond_{n-1};2)$).

Let $f:\overline{W} \to \overline{\Diamond}_n$ be the normalisation of the closure of $W$ in $\overline{\Diamond}_n.$  

Then for the sup modulus condition: 

$(iii)_s$ The divisor $\sup _{1\leq i \leq n} (f^{*}\{y_i=1 \})-2\cdot f^* \{t=0\}$ is an effective divisor on $\overline{W}.$ Here for a finite set of Weil divisors $\{ D_{i} \}_{1 \leq i \leq n }$ on a normal variety $X$, $\sup_{1\leq i \leq n}D_i$ is the divisor $D$ such that for any prime divisor $E$ on $X,$ order of $D$ along $E$ is the maximum of the orders of $D_{i}$ along $E.$

For the strong sup modulus condition:

$(iii)_{ss}$ There exists an $i \in \{1, \cdots, n \} $ such that $f^{*}\{y_i=1 \}-2\cdot f^* \{t=0\}$ is an effective divisor on $\overline{W}.$

Dividing $c_{p,s}(\Diamond_n;2)$ (resp. $c_{p,ss}(\Diamond_n;2)$) 
by the subgroup of degenerate cycles one obtains the groups $\pazocal{Z}_ {p,s}(\Diamond_n;2)$ (resp. $\pazocal{Z}_{p,ss}(\Diamond_n;2)).$ Letting $q=n+1-p,$ one denotes the same groups by $\pazocal{Z}_{s} ^q(\Diamond_n;2)$ (resp. $\pazocal{Z}_{ss} ^q(\Diamond_n;2)).$

\begin{remark}
If one would like to define a cycle complex over $k_2 $ computing its motivic cohomology, one might try to do so by considering $z_{f} ^{q}(S,n)/\pazocal{Z} ^{q} _{\cdot}(\Diamond_n;2),$ where $\cdot=s$ or $ss.$ However, we are doubtful that this would give the correct answer. In some sense, this approach only would only make cycles which are close to infinity equal to zero, whereas we think that two cycles which are close to each other in the sense of Remark  \ref{chow group} should be made the same in this quotient group. An opposite reason for our doubt in this definition with the sup modulus condition is that such a definition would imply that the regulator should be 0 on $\pazocal{Z} ^{q} _{s}(\Diamond_n;2).$ However, the example below shows that this is not the case. 
\end{remark}

{\it Example.} Consider  the cycle $Z \in z_{f} ^{2}(S,3)$ given by the parametric equations 
$$
xy=t, \; y_1=1-x^3, \; y_2=\frac{1+2y^2}{1+y^2}, \; y_3=\frac{1+2y}{1+y}.
$$
For simplicity, assume that $k$ is algebraically closed. We would like to compute $\rho_{f}(Z).$ Let us look at the contributions from each of the faces. Since  $\partial_{1} ^{\infty}\overline{Z}=(1,1)$ and $l(1,1)=0,$  this term does not contribute to the regulator. For $i=2,3,$ and $a=0,\, \infty,$ $\partial_{i} ^{a}\overline{Z}=(1-\alpha t^3,\cdot),$ for some $\alpha \in k.$ Since $l(1-\alpha t^3,\cdot)=0,$ these  terms do not contribute to the regulator either. Therefore, $\rho_f(Z)=l(\partial _{1} ^0 Z)=\sum _{\omega^3=1} l(\frac{1+2\omega t^2}{1+\omega t^2},\frac{1+2\omega^2 t}{1+\omega^2 t})=-3.$

This cycle given by the same formula above satisfies the sup modulus condition with modulus 2 and hence gives an element in $Z \in \pazocal{Z}_{s} ^2(\Diamond_3;2)),$  but does not satisfy the strong sup modulus condition with modulus 2 hence $Z \notin \pazocal{Z}_{ss} ^2(\Diamond_3;2)).$   In terms of our viewpoint, it is not surprising that the above cycle does not have 0 regulator value, since there is not a single function $y_i$ which is constant on all of $Z|_{t^2}:$ on one component $y_1$ is constant, on the  component $y_2$ is constant. 

 Note that that there is a natural  map  $\iota: \pazocal{Z} ^{q} _{s}(\Diamond_n;2) \to  z_{f} ^{q} (S,n),$ where $\iota$ maps a cycle $Z$ in  $\mathbb{A}^{1} _k\times_k  \square^{n} _k$ to its completion $\iota(Z)$ along 0 in $\mathbb{A}^{1} _k.$ However, in principle $z_{f} ^{q} (S,n)$ contains much more cycles since they need not satisfy any modulus condition. 
 
 Park defines a map $R: \pazocal{Z} ^{2} _{s}(\Diamond_2;2) \to k.$ We have the following relation between $R$ and $l.$ 

\begin{lemma} 
We have the equality $l \circ \iota=R$ of the  functions from $\pazocal{Z} ^{2} _{s}(\Diamond_2;2)$ to $k.$
\end{lemma}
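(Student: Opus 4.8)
The plan is to reduce both $l\circ\iota$ and $R$ to sums of local residue contributions indexed by the same fibre over $\{t=0\}$ and then to match these contributions. By linearity of both functions it suffices to treat an irreducible generator, i.e. an integral curve $W \in c_{1,s}(\Diamond_2;2)$ satisfying the sup modulus condition $(iii)_s$. Let $f:\overline{W}\to\overline{\Diamond}_2$ be the normalisation of the closure of $W$ and let $\{s_1,\dots,s_m\}$ be the points of $\overline{W}$ lying over $\{t=0\}$. By construction $\iota(W)$ is exactly the irreducible cycle in $\underline{z}^2_f(S,2)$ whose associated $\tilde{p}$ and fibre points $s_i$ in the defining formula (\ref{defnl}) are these very data, so that $l(\iota(W))=\sum_i {\rm Tr}_{k(s_i)/k}\,res_{\tilde{p},s_i}\big(\tfrac{1}{t^3}(\log^{\circ}_{\tilde{p},s_i}(y_1)\,d\log(y_2)-\log^{\circ}_{\tilde{p},s_i}(y_2)\,d\log(y_1))\big)$. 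Hence it is enough to identify each local summand with the corresponding local term of Park's $R$.

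Next I would recall the explicit shape of Park's regulator from \cite{P1}: on such a curve, after passing to the normalisation, $R(W)$ is a sum over the same fibre $\{s_1,\dots,s_m\}$ of traces of residues of the $1$-form $\tfrac{1}{t^3}\big(\log(y_1)\,d\log(y_2)-\log(y_2)\,d\log(y_1)\big)$, where the logarithm is the regularised branch obtained by subtracting the value of the argument at the point $s_i$. The content of the lemma is then that this regularised logarithm is precisely the intrinsic $\log^{\circ}_{\tilde{p},s_i}$ of \textsection 4.3, defined through the unique splitting $\sigma_{\tilde{p},s_i}$ of the étale residue field extension $k(s_i)\hookrightarrow\hat{\pazocal{O}}_{\tilde{p},s_i}$.

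I would then carry out the local identification at each $s_i$. Because $W$ has finite reduction and meets the faces properly, $t$ may be taken as a uniformizer at $s_i$, giving $\hat{\pazocal{O}}_{\tilde{p},s_i}\simeq k(s_i)[[t]]$; under this isomorphism $\log^{\circ}_{\tilde{p},s_i}(y_j)=\log(y_j)-\sigma_{\tilde{p},s_i}(\log(y_j(s_i)))$ is exactly Park's regularised logarithm, so the two $1$-forms coincide and their residues in $k(s_i)$ agree, as do the subsequent traces to $k$. Here I would also record the role of the sup modulus condition $(iii)_s$: it guarantees that $\iota(W)\in z^2_f(S,2)$ (finite reduction) and controls the pole order so that the $\tfrac{1}{t^3}$-residues are finite and the indexing fibres for $l\circ\iota$ and $R$ literally match.

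The main obstacle I anticipate is entirely in reconciling Park's conventions in \cite{P1} with the formula (\ref{defnl}): pinning down the precise normalising factor $\tfrac{1}{t^3}$, the antisymmetrisation in the two coordinates, and above all the regularisation of the logarithm, so that the two residue expressions are literally comparable. This last point is essential and genuinely requires care, because a naive reading with an unregularised branch would introduce a discrepancy term whose residue is $3\big(\log(y_1(s_i))\cdot\log^{\circ}_{\tilde{p},s_i}(y_2)[t^3]-\log(y_2(s_i))\cdot\log^{\circ}_{\tilde{p},s_i}(y_1)[t^3]\big)$, which does \emph{not} vanish in general; verifying that both $l\circ\iota$ and $R$ are branch-free via the same subtraction of the constant term is therefore the heart of the argument. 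Once the regularisations are matched, the equality holds term by term over the $s_i$.
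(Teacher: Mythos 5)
Your strategy coincides with the paper's, which disposes of this lemma with the single line ``Direct computation of both sides'': one writes each of $l\circ\iota$ and $R$ as a sum of local residues over the points $s_1,\dots,s_m$ of the normalisation $\overline{W}$ lying over $t=0$ and checks that the local $1$-forms agree. Your reduction to that local comparison is correct, and your observation that the only delicate point is the regularisation of the logarithm (subtracting the image of the value at $s_i$ under the canonical splitting $\sigma_{\tilde p,s_i}$) is exactly right -- an unregularised branch would not even make sense over a general $k$ and would contribute the nonzero discrepancy $3\bigl(\log(y_1(s_i))\log^{\circ}_{\tilde p,s_i}(y_2)[t^3]-\log(y_2(s_i))\log^{\circ}_{\tilde p,s_i}(y_1)[t^3]\bigr)$ that you compute. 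The gap is that you then stop: you quote Park's $R$ already in precisely the form that makes the identification a tautology, but you never verify from \cite{P1} that his normalisation (the factor $\frac{1}{t^{m+1}}$ with $m=2$, possible factors of $\tfrac12$ from antisymmetrisation, and his choice of regularised logarithm) really takes that form. Since the entire content of the lemma is this verification, the proof is not complete until that reconciliation -- which you yourself call the heart of the argument -- is actually carried out.

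Two smaller points. First, $t$ need not be a uniformizer at $s_i$: the map $\overline{W}\to\mathbb{A}^{1}_{t}$ may ramify over $t=0$, so one should work with $\hat{\pazocal{O}}_{\tilde p,s_i}\simeq k(s_i)[[u]]$ for a local uniformizer $u$ with $t$ a unit times a power of $u$; this is harmless because the residue of a $1$-form at a point of a curve is intrinsic, but the identification $\hat{\pazocal{O}}_{\tilde p,s_i}\simeq k(s_i)[[t]]$ as stated is not justified. Second, the assertion that the sup modulus condition $(iii)_s$ by itself guarantees that $\iota(W)$ has finite reduction deserves an argument: $(iii)_s$ constrains only the divisors $f^{*}\{y_i=1\}$ and says nothing a priori about $\overline{W}$ meeting $\{y_i=0\}$ or $\{y_i=\infty\}$ over the special fibre, which is what finite reduction (and hence the very definition of $\log^{\circ}_{\tilde p,s_i}(y_j)$) requires.
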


\begin{proof}
Direct computation of both sides. 
\end{proof}

Since we defined $\rho_{f}$ as the composition $l\circ \partial$ and since $\partial \circ \iota=\iota \circ \partial, $ we have the equality of the following functions on $\pazocal{Z} ^{2} _{s}(\Diamond_3;2):$
\begin{eqnarray}\label{parkunver}
\rho_f\circ \iota=R\circ \partial. 
\end{eqnarray}
 The above example shows that $\rho_f\circ \iota$ is not necessarily 0 on $\pazocal{Z} ^{2} _{s}(\Diamond_3;2).$ On the other hand, if $Z$ is a cycle satisfying the strong modulus condition, in other words $Z$ is in $\pazocal{Z} ^{2} _{ss}(\Diamond_3;2),$ such that $\iota(Z)$ is smooth then $\rho_f\circ \iota$  vanishes on $Z$ \cite[Theorem 3.1]{P1}. This can also  be seen by applying Proposition \ref{products} to $\iota(Z)$ since if $Z$ satisfies the modulus condition $(iii)_{ss}$ then $y_{i}$ restricted to $\iota(Z)|_{t^2}$ is equal to 1.

\end{document}